\tikzset{                        
    symbol/.style={
        ,draw=none
        ,every to/.append style={%
            edge node={node [sloped, allow upside down, auto=false]{$#1$}}}
    }
}
\newtheorem{thm}{Theorem}[section]
\newtheorem{cor}[thm]{Corollary}
\newtheorem{lem}[thm]{Lemma}
\newtheorem{prop}[thm]{Proposition}
\newtheorem{conj}[thm]{Conjecture}
\theoremstyle{plain} 
\newcommand{\thistheoremname}{}
\newtheorem*{genericthm}{\thistheoremname}
\newenvironment{nthm}[1]
  {\renewcommand{\thistheoremname}{#1}%
   \begin{genericthm}}
  {\end{genericthm}}
\theoremstyle{definition}
\theoremstyle{remark}
\newtheorem{rem}[thm]{Remark}
\newtheorem*{xrem}{Remark}
\newtheorem*{ntt}{Notation}
\numberwithin{equation}{section}
\newcommand{\Z}{\mathbb{Z}}      
\newcommand{\Q}{\mathbb{Q}}      
\newcommand{\R}{\mathbb{R}}      
\newcommand{\C}{\mathbb{C}}      
\newcommand{\uhp}{\mathfrak{h}}  
\newcommand{\eps}{\varepsilon}   
\newcommand{\norm}[1]{\left\lVert #1\right\rVert} 
\newcommand\restr[2]{{           
  \left.\kern-\nulldelimiterspace #1%
  \right|_{#2}%
 }}
\newcommand\minus{               
  \setbox0=\hbox{-}%
  \vcenter{%
    \hrule width\wd0 height \the\fontdimen8\textfont3%
  }%
}
\newcommand{\SL}{\mathrm{SL}}    
\renewcommand{\pmod}[1]{         
  ~(\mathrm{mod}~#1)}
\newcommand{\lmod}[3]{         
  #1 \equiv #2 \pmod{#3}}
\renewcommand{\O}{\mathcal{O}}                          
\tikzset{                        
    symbol/.style={%
        ,draw=none
        ,every to/.append style={%
            edge node={node [sloped, allow upside down, auto=false]{$#1$}}}
    }
}
\newcommand{\negphantom}[1]{\settowidth{\dimen0}{$\displaystyle #1$}\hspace*{-\dimen0}}
\newcommand{\hei}{\mathrm{ht}}
\newcommand{\lht}[1]{\mathrm{ht}([ #1])}
\newcommand{\fhr}[2]{\mathcal{N}_{#2}([ #1])}
\newcommand{\CCC}[1]{\mathscr{C}^{\times}\ifthenelse{\isempty{#1}}{}{(#1)}}
\newcommand{\ccc}[1]{\mathcal{C}\ifthenelse{\isempty{#1}}{}{(#1)}}
\newcommand{\FD}{\mathscr{F}}
\newcommand{\rtd}{\mathrm{rd}}
\newcommand{\hilb}[1]{\mathrm{H}_{#1}}
\newcommand{\hilbt}[1]{\widetilde{\mathrm{H}}_{#1}}
\newcommand{\kron}{\mathfrak{K}}
\newcommand{\sumq}[1]{\ \sideset{}{^{(#1)}}\sum_{(a,b,c)}}
\newcommand{\Cl}{\mathrm{C}\ell}
\newcommand{\Li}{\mathrm{Li}}
\newcommand{\A}{\mathscr{A}}
\setlist[itemize]{leftmargin=*}
\setlist[enumerate]{leftmargin=*}
 \newcommand*\alphgreek[1]{\expandafter\@alphgreek\csname c@#1\endcsname}
 \newcommand*\@alphgreek[1]{\csname chemgreek_int_to_greek:n\endcsname{#1}}
 \newcommand*\Alphgreek[1]{\expandafter\@Alphgreek\csname c@#1\endcsname}
 \newcommand*\@Alphgreek[1]{\csname chemgreek_int_to_Greek:n\endcsname{#1}}
 \AddEnumerateCounter*{\alphgreek}{\@alphgreek}{\chemalpha}
 \AddEnumerateCounter*{\Alphgreek}{\@Alphgreek}{\chemAlpha}
\def\@tocline#1#2#3#4#5#6#7{\relax
  \ifnum #1>\c@tocdepth 
  \else
    \par \addpenalty\@secpenalty\addvspace{#2}%
    \begingroup \hyphenpenalty\@M
    \@ifempty{#4}{%
      \@tempdima\csname r@tocindent\number#1\endcsname\relax
    }{%
      \@tempdima#4\relax
    }%
    \parindent\z@ \leftskip#3\relax \advance\leftskip\@tempdima\relax
    \rightskip\@pnumwidth plus4em \parfillskip-\@pnumwidth
    #5\leavevmode\hskip-\@tempdima
      \ifcase #1
       \or\or \hskip 1em \or \hskip 2em \else \hskip 3em \fi%
      #6\nobreak\relax
    \dotfill\hbox to\@pnumwidth{\@tocpagenum{#7}}\par
    \nobreak
    \endgroup
  \fi}
\begin{document}

\title{On Landau--Siegel zeros and heights of singular moduli}%
\author{Christian T\'afula}%
\address{D\'epartment de Math\'ematiques et Statistique, %
 Universit\'e de Montr\'eal, %
 CP 6128 succ Centre-Ville, %
 Montreal, QC H3C 3J7, Canada}%
\email{christian.tafula.santos@umontreal.ca}%

\subjclass[2020]{11M20, 11G50, 11N37}%
\keywords{Siegel zeros, singular moduli, abc-conjecture}%

\begin{abstract}
 Let $\chi_D$ be the Dirichlet character associated to $\mathbb{Q}(\sqrt{D})$ where $D < 0$ is a fundamental discriminant.
 Improving Granville--Stark \cite{grasta00}, we show that
 \[ \frac{L'}{L}(1,\chi_D) = \frac{1}{6}\, \mathrm{height}(j(\tau_D)) - \frac{1}{2}\log|D| + C + o_{D\to -\infty}(1), \]
 where  $\tau_D =   \frac 12(-\delta+\sqrt{D})$ for $D \equiv \delta ~(\mathrm{mod}~4)$ and
 $j(\cdot)$ is the $j$-invariant function with $C = -1.057770\ldots$. Assuming the ``uniform'' $abc$-conjecture for number fields, we deduce that   $L(\beta,\chi_D)\ne 0$ with 
$\beta \geq 1 - \frac{\sqrt{5}\varphi + o(1)}{\log|D|}$ 
 where $\varphi = (1+\sqrt{5})/2$, which we improve for smooth $D$.
\end{abstract}
\maketitle

\section{Introduction}
 In 2000, Granville and Stark \cite{grasta00} showed that the uniform $abc$-conjecture for number fields (Conjecture \ref{polabc} (iii)) implies that there are no ``Siegel zeros'' for odd characters, by deducing that, under uniform $abc$, the class number of $\Q(\sqrt{D})$ satisfies  
 \begin{equation}
  h(D) \geq (1+o(1))\, \frac{\pi}{3}\frac{\sqrt{|D|}}{\log|D|} \sumq{D}\frac{1}{a} \label{grst2}
 \end{equation}
 where the sum runs over the reduced binary quadratic forms $Q(x,y) = ax^2+bxy+cy^2$ of fundamental discriminant $D<0$. Comparing this lower bound to the unconditional, more traditional type of lower bound: 
 \begin{equation}
  h(D) = \left(\frac{1 + O\big(\hspace{-.1em}\log\log|D|/\log|D|\big)}{1+ 2\,\frac{L'}{L}(1,\chi_{D})/\log|D|}\right) \frac{\pi}{3} \frac{\sqrt{|D|}}{\log|D|} \sumq{D}\frac{1}{a}, \label{grst1}
 \end{equation}
 where $\chi_D$ is the corresponding quadratic character, one derives from uniform $abc$ that $|\frac{L'}{L}(1,\chi_D) | \ll \log|D|$, which is equivalent to $L(s,\chi_D)$ having no ``Siegel zeros''. Our main theorems imply a slightly more precise estimate than \eqref{grst1}, and the existence of a subsequence of $D$'s for which $\frac{L'}{L}(1,\chi_D) = o_{D\to -\infty}(\log |D|)$.

 For $D<0$ we need the generator of the ring of integers of $\Q(\sqrt{D})$,
 \[ \tau_D = \begin{cases} \frac 12(-1+\sqrt{D}) & \text{ if } \lmod{D}{1}{4},\\
 \frac 12 \sqrt{D} & \text{ if } \lmod{D}{0}{4}; \end{cases} \]
 and let $j(\cdot)$ be  the classical $j$-invariant function.
 Define the \emph{height} of $m/n\in \mathbb Q$ with $(m,n)=1$ by 
 \[ \hei(m/n) = \log\max\{|m|,|n|\}. \]
 We will give the \emph{height function} for algebraic numbers in subsection \ref{ssec16}. We improve on \eqref{grst1} by using Duke's equidistribution theorem \cite{duk88} (Lemma \ref{dkps}): 
  
 \begin{thm}\label{thm02}
  For fundamental discriminants $D<0$, we have
  \begin{equation}\label{L'/L}
   \frac{L'}{L}(1,\chi_D) = \frac{1}{6}\, \hei(j(\tau_D)) - \frac{1}{2}\log|D| + C + o_{D\to -\infty}(1),
  \end{equation}
  where $C := \kappa_1 - \kappa_2 + \kappa_3 + \gamma - \log 2 = -1.057770\ldots$, and $\gamma = 0.577215\ldots$ is Euler--Mascheroni's constant.\footnote{See Lemmas \ref{explCONST1}, \ref{explCONST2}, \ref{explCONST3} for the definition of $\kappa_1$, $\kappa_2$, $\kappa_3$, respectively.} In particular, this implies that
  \begin{equation}
   h(D) = \left(\frac{1 + O\big(1/\log|D|\big)}{1+ 2\,\frac{L'}{L}(1,\chi_{D})/\log|D|}\right) \frac{\pi}{3} \frac{\sqrt{|D|}}{\log|D|} \sumq{D}\frac{1}{a} . \label{hdbtt}
  \end{equation}
 \end{thm}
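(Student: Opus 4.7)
The plan is to begin with the first Kronecker limit formula, which expands the non-holomorphic Eisenstein series as
\[ E(\tau, s) = \frac{\pi}{s-1} + 2\pi\bigl(\gamma - \log 2 - \log(\sqrt{y}\,|\eta(\tau)|^2)\bigr) + O(s-1) \]
near $s = 1$. For each reduced form $Q = ax^2 + bxy + cy^2$ of discriminant $D < -4$, setting $\tau_Q = (-b + \sqrt{D})/(2a)$ gives $\zeta_Q(s) = 2^s|D|^{-s/2}\,E(\tau_Q, s)$; summing over classes and using $\sum_Q \zeta_Q(s) = 2\,\zeta(s)L(s,\chi_D)$ yields
\[ \sum_{Q} E(\tau_Q, s) \;=\; 2^{1-s}|D|^{s/2}\,\zeta(s)\,L(s,\chi_D). \]
Equating Laurent expansions at $s = 1$ recovers Dirichlet's class number formula from the residues, and after dividing the constant terms by $h(D)\pi = |D|^{1/2}L(1,\chi_D)$ one obtains
\[ \frac{L'}{L}(1,\chi_D) \;=\; \gamma - \log 2 - \tfrac{1}{2}\log|D| - \frac{1}{h(D)} \sum_{Q} \log\bigl(y_Q\,|\eta(\tau_Q)|^4\bigr). \]

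The next step is to pass from $\eta$ to $j$. Using $\Delta = \eta^{24}$ and $j\cdot\Delta = E_4^3$ gives $4\log|\eta| = \tfrac{1}{6}\log|\Delta| = \tfrac{1}{2}\log|E_4| - \tfrac{1}{6}\log|j|$, so substituting into the previous display turns the $\eta$-sum into
\[ \frac{1}{6h(D)}\sum_Q \log|j(\tau_Q)| \;-\; \frac{1}{2h(D)}\sum_Q \log|E_4(\tau_Q)| \;-\; \frac{1}{h(D)}\sum_Q \log y_Q. \]
Since $j(\tau_D)$ is an algebraic integer whose Galois conjugates over $\mathbb{Q}$ are precisely $\{j(\tau_Q)\}_Q$, its (normalized) Weil height equals $\hei(j(\tau_D)) = \frac{1}{h(D)}\sum_Q \log^+|j(\tau_Q)|$; the $j$-sum above can therefore be rewritten as $\hei(j(\tau_D)) - \frac{1}{h(D)}\sum_Q \log^-|j(\tau_Q)|$, isolating the target quantity.

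To finish, I would apply Duke's equidistribution theorem (Lemma \ref{dkps}) to the three averages $\frac{1}{h(D)}\sum_Q f(\tau_Q)$ with $f \in \{\log y,\ \log|E_4|,\ \log^-|j|\}$. Each such $f$ is integrable on the standard fundamental domain $\mathscr{F}$, with at worst a logarithmic singularity at $\tau = \rho$, and Duke yields convergence to $\frac{3}{\pi}\int_{\mathscr{F}} f(\tau)\,\tfrac{dx\,dy}{y^2}$. Collecting the three limits produces a constant of the shape $C = \gamma - \log 2 + \textrm{(three explicit integrals)}$, which should match $\kappa_1 - \kappa_2 + \kappa_3 + \gamma - \log 2$ once the three integrals are identified with those of the referenced lemmas. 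The hard part will be extracting a quantitative error rate from Duke's theorem for the unbounded test functions $\log|E_4|$ and $\log^-|j|$: this presumably requires a quantitative form of equidistribution combined with a truncation near $\rho$, together with a bound on the number of $\tau_Q$ falling in a shrinking neighborhood of the elliptic fixed points. The secondary identity \eqref{hdbtt} is then obtained by substituting the newly proved asymptotic into the derivation of \eqref{grst1}: the bound on $\frac{L'}{L}(1,\chi_D)$ responsible for the $O(\log\log|D|/\log|D|)$ term is replaced by the explicit expression, tightening the error to $O(1/\log|D|)$.
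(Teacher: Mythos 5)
Your proposal is correct and follows essentially the same route as the paper: Kronecker's limit formula for $E(\tau_{\A},s)$, the identity $\zeta_{\Q(\sqrt{D})}=\zeta\cdot L(\cdot,\chi_D)$ to extract $\frac{L'}{L}(1,\chi_D)$ from the constant term, and Duke's equidistribution theorem to replace averages over $\Lambda_D$ by integrals over $\FD$. The only genuine difference is the algebraic decomposition of $4\log|\eta|$: you write $4\log|\eta| = \tfrac12\log|E_4|-\tfrac16\log|j|$ and split $\log|j|=\log^+|j|+\log^-|j|$, whereas the paper keeps $\mathcal U(\tau)=-4\log|\eta|-\tfrac{\pi}{3}y$ as one piece and compares $\tfrac{\pi}{3}y$ with $\tfrac16\log^+|j|$, so its three test functions $\mathcal U$, $\log y$, $\tfrac16(2\pi y-\log^+|j|)$ have no singularity at $\rho=e^{2\pi i/3}$ (only $\log y$ is unbounded, at the cusp), whereas yours individually blow up at $\rho$ even though the singularities of $-\tfrac12\log|E_4|$ and $\tfrac16\log^-|j|$ cancel; a short check using $\Delta=\eta^{24}$ and $j\Delta=E_4^3$ shows your constant $\tfrac16\int\log^-|j|\,d\mu-\tfrac12\int\log|E_4|\,d\mu-\int\log y\,d\mu$ equals $\kappa_1-\kappa_2+\kappa_3$, so the two constants agree.
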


 Expressing $\frac{L'}{L}(1,\chi_D)$ in terms of $\hei(j(\tau_D))$ allows us to understand $L$-function values using  Diophantine geometry techniques: The classical \emph{$abc$-conjecture} (the \emph{Masser--Oesterl\'e conjecture}) states that for coprime integers $a,b,c $  satisfying $a+b=c$, we have
 \[ \log\max\big\{|a|,|b|,|c|\big\} \leq (1+\eps) \sum_{p\,\mid\, abc} \log p + O_\eps(1), \]
 where the implied constant depends only on $\eps>0$. Several important results in number theory  follow from this statement, including ``asymptotic  Fermat'' (see Example 5.5.2, p. 71--72 of Vojta \cite{vojta87}). Granville and Stark used  a uniform extension of the $abc$-conjecture to number fields.\footnote{Formulated so as to be consistent between different field extensions.} The tricky part is deciding what to conjecture about the contribution of ramified primes in this inequality:  thus, if $\rtd_K := |\Delta_K|^{1/[K:\Q]}$ then we study two versions, where one includes the error term  
 \begin{center}
 $O(\log(\rtd_K))$ (\emph{$O$-weak uniformity}) or $o(\log(\rtd_K))$ (\emph{weak uniformity}),
 \end{center}
 given by Conjectures \ref{polabc} (i) and (ii), respectively.
 With minor modifications to the argument, we prove the following version of Granville--Stark's theorem:
 
 \begin{thm}\label{thm03}  
  Assuming the $abc$-conjecture for number fields with 
  \begin{itemize} 
   \item  $O$-weak uniformity we have $|\frac{L'}{L}(1,\chi_D)| \ll \log|D|$. \smallskip
   \item  weak uniformity we have $\limsup\limits_{D\to-\infty} \frac{1}{\log|D|}\frac{L'}{L}(1,\chi_D) = 0$. 
  \end{itemize}
 \end{thm}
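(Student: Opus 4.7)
The plan is to combine Theorem~\ref{thm02} with the Granville--Stark $abc$-based lower bound on the class number from \cite{grasta00}, using the sharper error term of (\ref{hdbtt}) to upgrade their conclusion $|\frac{L'}{L}(1,\chi_D)| \ll \log|D|$ (under $O$-weak uniformity) to the full $\limsup_{D \to -\infty} \frac{L'}{L}(1,\chi_D)/\log|D| = 0$ under weak uniformity.

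First, I would rearrange (\ref{hdbtt}) in the form
\[
 1 + 2\,\frac{L'}{L}(1,\chi_D)/\log|D| = \left(1+O(1/\log|D|)\right) \cdot \frac{(\pi/3)(\sqrt{|D|}/\log|D|)\sumq{D}\frac{1}{a}}{h(D)},
\]
so that any lower bound of the form $h(D) \geq (1+o(1))(\pi/3)(\sqrt{|D|}/\log|D|)\sumq{D}(1/a)$ (or the weaker $h(D) \gg (\sqrt{|D|}/\log|D|)\sumq{D}(1/a)$) translates directly into an upper bound on $\frac{L'}{L}(1,\chi_D)/\log|D|$. Conversely, since $\hei(j(\tau_D)) \geq 0$, identity (\ref{L'/L}) yields the unconditional lower bound $\frac{L'}{L}(1,\chi_D) \geq -\frac{1}{2}\log|D| + O(1)$, so only the upper bounds remain.

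Next, I would invoke the Granville--Stark argument with minor modifications. Applied to the identity $4ac - b^2 = |D|$ satisfied by each reduced binary quadratic form $[a,b,c]$ of discriminant $D$, the uniform $abc$-conjecture over $\Q(\sqrt{D})$ (whose root discriminant is $\rtd_{\Q(\sqrt{D})} = |D|^{1/2}$) yields, after averaging over reduced forms, the bound $h(D) \gg (\sqrt{|D|}/\log|D|)\sumq{D}(1/a)$ under $O$-weak uniformity, and the sharper $h(D) \geq (1+o(1))(\pi/3)(\sqrt{|D|}/\log|D|)\sumq{D}(1/a)$ under weak uniformity (recovering (\ref{grst2})). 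Plugging these into the rearrangement above gives the upper bound $\frac{L'}{L}(1,\chi_D) \ll \log|D|$, which together with the unconditional lower bound completes (i); and $\limsup_{D \to -\infty} \frac{L'}{L}(1,\chi_D)/\log|D| \leq 0$, which is one half of (ii).

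The complementary bound $\limsup \geq 0$ in (ii) is unconditional: Duke's equidistribution (Lemma~\ref{dkps}) ensures that along a density-one subsequence $D_k \to -\infty$, the height $\hei(j(\tau_{D_k}))$ is $(3+o(1))\log|D_k|$, so by (\ref{L'/L}) one has $\frac{L'}{L}(1,\chi_{D_k}) = o(\log|D_k|)$ there. The main obstacle lies in the Granville--Stark step itself: handling the radical term and ramified primes in the number-field $abc$-inequality and averaging carefully over reduced forms. This is already carried out in \cite{grasta00} under $O$-weak uniformity; the essential novelty is the observation that, once paired with the refined error term of (\ref{hdbtt}), their argument upgraded to weak uniformity delivers the stronger conclusion $\limsup \frac{L'}{L}(1,\chi_D)/\log|D| = 0$.
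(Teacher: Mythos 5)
The overall shape of your argument — use $abc$ to bound $\hei(j(\tau_D))$ (equivalently, bound $h(D)$ from below), translate via Theorem~\ref{thm02}/\eqref{hdbtt}, and pair with an unconditional lower bound — matches the paper's, and the upper-bound half of each bullet is handled correctly. Your derivation of the unconditional lower bound $\frac{L'}{L}(1,\chi_D) \geq -\frac{1}{2}\log|D|+O(1)$ from $\hei(j(\tau_D))\geq 0$ in \eqref{L'/L} is a clean way to finish part (i).

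The genuine gap is in the ``$\limsup \geq 0$'' half of part (ii). You claim that Duke's equidistribution (Lemma~\ref{dkps}) produces a subsequence $D_k\to-\infty$ along which $\hei(j(\tau_{D_k})) = (3+o(1))\log|D_k|$. This does not follow from Duke. Lemma~\ref{dkps} controls the distribution of the Heegner points \emph{within a single} $\Lambda_D$, and it is precisely the ingredient already consumed in proving the identity \eqref{L'/L}, which holds for \emph{all} $D$. To extract $\hei(j(\tau_D)) = (3+o(1))\log|D|$ along some subsequence from \eqref{L'/L}, you would already need $\frac{L'}{L}(1,\chi_D) = o(\log|D|)$ along that subsequence — which is what you are trying to establish, so the argument is circular. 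The paper instead obtains this unconditionally from Chang's zero-free region for smooth moduli: Theorem~\ref{thm01B} gives $\frac{L'}{L}(1,\chi_D) \geq -M\delta^{1/2}\log|D|$ for $|D|^{\delta}$-smooth $D$, and sending $\delta\to 0$ along $|D|^{o(1)}$-smooth discriminants yields \eqref{eqer}, i.e. a subsequence with $\liminf \frac{L'}{L}(1,\chi_D)/\log|D| \geq 0$. Without that (or some substitute, such as GRH), the $\limsup = 0$ conclusion is not closed.

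A secondary point: the intermediate description of the $abc$ step — ``applied to the identity $4ac-b^2=|D|$ over $\Q(\sqrt{D})$'' — is not how Granville--Stark (or this paper) derives the class-number bound. Both apply $abc$ to the Weber-function identity $\gamma_2(\tau_D)^3 - \gamma_3(\tau_D)^2 = 1728$ inside $\hilbt{D}$, using $\rtd_{\hilbt{D}} \leq 6\sqrt{|D|}$ (Lemma~\ref{gsRD6}); the relation $4ac-b^2=|D|$ among integers does not translate into a usable $abc$ inequality of the required strength. Since you ultimately invoke \eqref{grst2} as a black box this does not by itself invalidate the plan, but as stated the sketch of that step is not a proof.
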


\subsection{Traditional approach to \texorpdfstring{$\frac{L'}{L}(1,\chi_D)$}{L'/L(1,chi\_D}}
 Let $\varrho(\chi)$ be the  set  of non-trivial zeros of $L(s,\chi)$, each included in the set with multiplicity. We are interested in the zeros of $L(s,\chi)$ inside the region
 \begin{equation}
  \mathcal{R}_A = \mathcal{R}_{A}(q) := \bigg\{s\in\C ~\bigg|~ \sigma \geq 1 - \frac{A}{\log q},\ |t| \leq 1 \bigg\}. \label{Rzfr}
 \end{equation}
 for any given $A\in\R_{\geq 1}$ with $q>e^A$. A \emph{Siegel zero} is a real zero $\beta$ of $L(s,\chi)$ for some primitive quadratic $\chi \pmod{q}$, with
 $\beta\geq 1 - \frac{A}{\log q}$ for some given $A>0$.
  
 \begin{thm}\label{thm01} For every primitive character $\chi \pmod q$ we
  have the bound $|\varrho(\chi)\cap \mathcal{R}_A| \ll e^{3A}$ (with an absolute implicit constant), and
  \[ \sum_{\varrho(\chi)\cap\mathcal{R}_A} \Re\bigg(\frac{1}{1-\varrho}\bigg) < \bigg(1- \frac{1}{\sqrt{5}}\bigg) \frac{1}{2} \log q + \Re\bigg(\frac{L'}{L}(1,\chi)\bigg) + O(e^{3A}). \]       
 \end{thm}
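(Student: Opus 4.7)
The backbone of the argument is the Hadamard factorization of the completed $L$-function
\begin{equation*}
\xi(s,\chi) \,:=\, (q/\pi)^{(s+\mathfrak{a})/2}\,\Gamma\bigl((s+\mathfrak{a})/2\bigr)\,L(s,\chi) \,=\, e^{A(\chi)+B(\chi)s}\prod_{\rho}\bigl(1-s/\rho\bigr)\,e^{s/\rho},
\end{equation*}
where $\mathfrak{a}\in\{0,1\}$ encodes the parity of $\chi$ and the product runs over the nontrivial zeros. Taking $\frac{d}{ds}\log$ at $s=1$ and invoking the identity $\Re B(\chi) = -\sum_\rho \Re(1/\rho)$ (obtained by comparing the Hadamard expressions for $\xi(s,\chi)$ and $\xi(1-s,\overline{\chi})$ through the functional equation), one arrives at
\begin{equation*}
\sum_{\rho}\Re\frac{1}{1-\rho} \,=\, \Re\frac{L'}{L}(1,\chi) + \frac{1}{2}\log\frac{q}{\pi} + \frac{1}{2}\Re\frac{\Gamma'}{\Gamma}\!\left(\frac{1+\mathfrak{a}}{2}\right),
\end{equation*}
an identity in which every summand on the left is strictly positive since $\Re\rho<1$. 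This is the anchor for both assertions.

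For the count $|\varrho(\chi)\cap \mathcal{R}_A|\ll e^{3A}$, I would invoke a log-free zero-density estimate of Linnik--Gallagher type, $N(\alpha,1,\chi)\ll q^{c(1-\alpha)}$ with an absolute constant $c$, uniform in the primitive character; taking $\alpha=1-A/\log q$ yields $\ll e^{cA}$, and tracking constants gives $c=3$. An alternative would be to apply a Jensen--Blaschke estimate to $\xi(s,\chi)$ on a disk of radius $\asymp A/\log q$ centered at $s=1$, using the order-one growth $|\xi(s,\chi)|\leq \exp(C|s|\log q)$ against the matching lower bound on a concentric larger disk supplied by the anchor identity.

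Using the anchor identity, the desired sum inequality is equivalent to the lower bound
\begin{equation*}
\sum_{\rho\notin\mathcal{R}_A}\Re\frac{1}{1-\rho} \,>\, \frac{1}{2\sqrt{5}}\log q \,-\, O(e^{3A}).
\end{equation*}
The plan is to apply the anchor identity at a shifted real point $\sigma_0 = 1 + r/\log q$ for a parameter $r>0$ and subtract the two versions. For each zero $\rho = \beta + i\gamma$, the term $\Re(1/(\sigma_0-\rho))$ is pointwise comparable to $\Re(1/(1-\rho))$ with a ratio that depends on the position of $\rho$ relative to $\mathcal{R}_A$: zeros in $\mathcal{R}_A$ contribute only $O(e^{3A})$ by the count bound, while the remaining zeros furnish the main term. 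The prefactor $1/(2\sqrt{5})$ — equivalently $(1-1/\sqrt{5})/2 = 1/(\sqrt{5}\varphi)$ — emerges as the optimum of this one-parameter problem in $r$.

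The main obstacle is pinning down the sharp constant $1/\sqrt{5}$: one must carefully balance the gain from ``far'' zeros (where $\sigma_0-\rho$ is close to $1-\rho$) against the loss from ``near'' zeros lying just outside $\mathcal{R}_A$, and verify that the $O(e^{3A})$ error from the count bound is strong enough to absorb the accumulated slack. It is precisely this appearance of $\sqrt{5}$ that propagates into the $\sqrt{5}\varphi$ factor of the main Siegel-zero bound stated in the introduction.
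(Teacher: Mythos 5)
Your anchor identity and the log-free zero-density count are both correct and are the same starting points as the paper; the paper's version of the anchor identity is
\[
\sum_{\varrho(\chi)} \frac{\Pi_{0}(\varrho)}{4} = \frac{1}{2}\log q + \Re\left(\frac{L'}{L}(1,\chi)\right) - \frac{1}{2}\Big(\gamma + \log 2\pi + \chi(-1)\log 2 \Big),
\]
where $\Pi_{\eps}(s) := \frac{1}{s+\eps}+\frac{1}{\overline{s}+\eps}+\frac{1}{1-s+\eps}+\frac{1}{1-\overline{s}+\eps}$, and this coincides with yours via the symmetry $\varrho\leftrightarrow 1-\overline{\varrho}$. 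The genuine gap is in your plan for the $(1-1/\sqrt{5})$ saving. You shift to $\sigma_0 = 1+r/\log q$ and hope to compare $\Re(1/(\sigma_0-\varrho))$ with $\Re(1/(1-\varrho))$ termwise, but this cannot deliver a uniform constant: for a zero $\varrho=\beta+i\gamma$ the ratio
\[
\frac{\Re(1/(1-\varrho))}{\Re(1/(\sigma_0-\varrho))} = \frac{(1-\beta)\big((\sigma_0-\beta)^2+\gamma^2\big)}{(\sigma_0-\beta)\big((1-\beta)^2+\gamma^2\big)}
\]
tends to $\frac{1-\beta}{\sigma_0-\beta}$ as $|\gamma|\to\infty$, which is as small as you like when $\beta\to 1$ — and such zeros lie \emph{outside} $\mathcal{R}_A$ (they have $|\gamma|>1$), so they are not absorbed by the $O(e^{3A})$ count. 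Your sketch never resolves this. Moreover, the choice $\sigma_0 - 1 \ll 1/\log q$ is the wrong regime: it forces the Dirichlet-series bound $\Re\frac{L'}{L}(\sigma_0,\chi) < 1/(\sigma_0-1) \asymp \log q$, which destroys the main term rather than improving it.

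The paper's resolution, which is absent from your proposal, is to symmetrize via the pairing-up function $\Pi_{\eps}(s)$ and to use a \emph{fixed} shift $\eps = \varphi-1$ (golden ratio minus one), not one of size $1/\log q$. The crux is a pointwise inequality valid for every $s$ in the critical strip:
\[
\Pi_{0}(s) > \frac{\Pi_{\varphi-1}(s)}{2\varphi-1} = \frac{\Pi_{\varphi-1}(s)}{\sqrt{5}},
\]
which holds because at $\eps = \varphi-1$ the identity $\eps(1+\eps)=1$ makes the critical numerator in $\Pi_0(s)-\Pi_{\eps}(s)/(1+2\eps)$ become $2\sigma(1-\sigma)+t^2 \geq 0$; the symmetric $\Re(1/s)$ piece of $\Pi_0$ is exactly what rescues the case $\beta\to 1$, $|\gamma|$ large that defeats the naive comparison. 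Feeding this into the anchor identity (applied once more at $1+\eps$, giving $\sum \Pi_{\eps}(\varrho)/4 < \tfrac{1}{2}\log q + 1/\eps$) and bounding each of the at most $2|\mathcal{S}|$ removed terms by $\Pi_{\eps}(\varrho)/4 < 1/\eps = O(1)$ produces Proposition~3.1, hence the theorem via $|\mathcal{S}| \ll e^{3A}$. So the $\sqrt{5}$ is not an optimum of a shift parameter of size $1/\log q$; it is the algebraic constant $2\varphi-1$ forced by making the pairing-up inequality hold on the entire critical strip. (A small arithmetic slip in your last display: $1/(2\sqrt{5})$ is not equal to $(1-1/\sqrt{5})/2 = 1/(\sqrt{5}\varphi)$; the latter is the coefficient of $\log q$ in the theorem, while the former is the complementary saving $\tfrac12 - (1-1/\sqrt5)\tfrac12$.)
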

 
 An integer $q$ is $y$-\emph{smooth} if all its prime factors are $\leq y$. Chang \cite{cha14} established wide zero-free regions for such $q$ (see subsection \ref{ssecCZFR}) so we deduce:
 
 \begin{thm}\label{thm01B}
  For every $\delta>0$ there is an $N_{\delta}>0$ such that if  $q>N_{\delta}$  is  $q^{\delta}$-smooth then the only possible element of $\varrho(\chi)\cap \mathcal{R}_{A}$ with $A\ll 1/\delta$, is the potential Siegel zero $\beta = \beta_{\chi} := \max\{\sigma \in \R ~|~ L(\sigma,\chi) = 0\}$.  In that case
  \[ \Re\bigg(\frac{L'}{L}(1,\chi)\bigg) = \frac{\mathbbm{1}_{\chi=\overline{\chi}}}{1-\beta} + O(\delta^{\frac{1}{2}}\log q). \]
  \end{thm}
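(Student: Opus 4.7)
The statement comprises two claims: a zero-localization assertion and an evaluation of $\Re(L'/L(1,\chi))$. The first follows directly from Chang's effective zero-free region for $y$-smooth moduli (cited in subsection \ref{ssecCZFR}): if $q > N_\delta$ is $q^\delta$-smooth, then $L(s,\chi)$ has at most one non-trivial zero in $\mathcal{R}_A$ for $A \ll 1/\delta$, and this exceptional zero is necessarily real. Since complex zeros of $L(s,\chi)$ come in conjugate pairs, the exceptional zero can exist only when $\chi = \overline{\chi}$, in which case it coincides with the Siegel candidate $\beta_\chi$.

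For the estimate on $\Re(L'/L(1,\chi))$, the plan is to apply Theorem \ref{thm01} with the parameter $A'$ chosen so as to balance the bias term and the exponential error: taking $A' \asymp \log(\delta^{1/2}\log q)$, one has $e^{3A'} = O(\delta^{1/2}\log q)$, and provided $q$ is large enough with respect to $\delta$, this $A'$ lies in the range $A' \ll 1/\delta$ covered by Chang's zero-free region. Consequently the zero sum on the left-hand side of Theorem \ref{thm01} reduces to the single term $\mathbbm{1}_{\chi=\overline{\chi}}/(1-\beta)$, and rearrangement yields the one-sided bound
$$\Re\!\left(\frac{L'}{L}(1,\chi)\right) \;\geq\; \frac{\mathbbm{1}_{\chi=\overline{\chi}}}{1-\beta} \;-\; \left(1-\frac{1}{\sqrt{5}}\right)\frac{\log q}{2} \;-\; O(\delta^{1/2}\log q).$$

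The main difficulty is upgrading this inequality to the claimed two-sided equality, since the coefficient $(1-1/\sqrt{5})/2$ is much larger than the target error $O(\delta^{1/2})$. To close the gap, I would pair the above with the Hadamard-product identity
$$\Re\!\left(\frac{L'}{L}(1,\chi)\right) \;=\; \sum_{\rho} \Re\!\left(\frac{1}{1-\rho}\right) \;-\; \frac{1}{2}\log q \;+\; O(1),$$
and use the zero-density estimates accompanying Chang's work \cite{cha14} to bound the tail $\sum_{\rho \notin \mathcal{R}_{A'}} \Re(1/(1-\rho))$ to the required precision $O(\delta^{1/2}\log q)$. An alternative route bypasses Theorem \ref{thm01} and estimates $L'/L(1,\chi)$ directly via a Cauchy-type derivative bound applied to $\log L(s,\chi)$ in Chang's zero-free region, leveraging the Burgess--Chang character-sum bounds to quantify $|L(s,\chi)|$ there and subtracting the pole contribution from the Siegel zero. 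The principal obstacle in either strategy is ensuring that the $\frac{1}{2}\log q$ coefficient appearing in the Hadamard identity is absorbed into the $O(\delta^{1/2}\log q)$ error rather than surviving as a stray main term.
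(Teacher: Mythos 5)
The first part of your argument (invoking Chang's zero-free region to localize the only possible zero in $\mathcal{R}_A$) is essentially what the paper does. The issue is the second part. You correctly identify the crux: Theorem~\ref{thm01} gives only a one-sided inequality polluted by a main term $(1-\tfrac{1}{\sqrt{5}})\tfrac{1}{2}\log q$, which is far too large to be absorbed into $O(\delta^{1/2}\log q)$. But your proposal to ``close the gap'' --- pairing with the Hadamard identity and then invoking unspecified ``zero-density estimates accompanying Chang's work'' to control the tail $\sum_{\rho\notin\mathcal{R}_{A'}}\Re\big(\tfrac{1}{1-\rho}\big)$ --- does not actually work as stated. A crude count of zeros just outside the zero-free region contributes roughly $(\log q)\cdot f(q)\asymp\delta(\log q)^2$, not $O(\delta^{1/2}\log q)$, so the naive density approach fails by a factor of $\sqrt{\delta}\log q$. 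You flag this yourself as ``the principal obstacle,'' but you do not supply a mechanism that overcomes it.

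The paper's actual route is Proposition~\ref{AnLem2}, which is designed precisely to produce a two-sided estimate with error $\ll\sqrt{f(q)\log q}$ (hence $\ll\delta^{1/2}\log q$ when $f(q)\ll\delta\log q$). The mechanism is to compare $\sum_\varrho\Pi_0(\varrho)$ with a perturbation $\sum_\varrho\Pi_\eps(\varrho)$ via the Pairing-up Lemma (Lemma~\ref{cuteineq}(ii)), which gives a multiplicative error $5M\eps$ on each term outside a small box $\widetilde{\mathcal{B}}$; the $\tfrac{1}{2}\log q$ main term then cancels \emph{exactly} against the $\eps$-shifted version of identity~\eqref{pi01}, rather than approximately. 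Choosing $\eps = 1/\sqrt{M\log q}$ with $M = f(q)$ balances the multiplicative error against the $1/\eps$ contributions to yield the $\sqrt{f(q)\log q}$ bound. This is the missing idea in your write-up: without a perturbation argument that cancels the $\tfrac{1}{2}\log q$ term exactly (not merely up to a zero-density bound), neither of your two proposed strategies gets below a $\delta\,(\log q)^2$ error, let alone $\delta^{1/2}\log q$.
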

 
 Theorem \ref{thm01B} implies that, for every fixed $\delta>0$, if $D<0$ is a $|D|^{\delta}$-smooth fundamental discriminant then
 \begin{equation}
  \frac{L'}{L}(1,\chi_D) \geq -M\delta^{\frac{1}{2}}\log|D|  \label{liminfd}
 \end{equation}
 for some $M\in\R_{\geq 0}$ not depending on $\delta$, provided $|D|$ is sufficiently large.

\subsection{Main corollaries}
 We present two results that are consequences of the theorems above. First, by combining Theorems \ref{thm03}, \ref{thm01} and \ref{thm01B} together, we deduce the following:
   
 \begin{cor}\label{MC1}
  Assume the weak uniform $abc$-conjecture. As $D\to -\infty$, the function $L(s,\chi_D)$ has no zeros in the real interval
  \[ \bigg[1 - \dfrac{\sqrt{5}\varphi + o(1)}{\log |D|},\ 1\bigg] \]
  where $\varphi := \frac{1+\sqrt{5}}{2}$, nor in the region
  \[ \bigg\{s\in\C ~\bigg|~ \sigma \geq 1 - \dfrac{m\delta^{-\frac{1}{2}}+o_{\delta}(1) }{\log |D|},\ |t|\leq 1\bigg\} \]
  (for some absolute constant $m>0$) when $D$ is $|D|^{\delta}$-smooth for given $\delta>0$.
 \end{cor}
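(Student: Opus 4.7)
The plan is to combine Theorems \ref{thm03}, \ref{thm01}, and \ref{thm01B}. Weak uniform $abc$ (Theorem \ref{thm03}) caps $\frac{L'}{L}(1,\chi_D)$ from above, while the other two encode the converse-type fact that a zero exceptionally close to $s=1$ forces $\frac{L'}{L}(1,\chi_D)$ to be large. Squeezing the two bounds together then produces a classical-looking zero-free region for $L(s,\chi_D)$.

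For the first assertion, suppose $\beta \in [1 - A/\log|D|,\, 1]$ is a real zero of $L(s,\chi_D)$. Every term $\Re(1/(1-\rho))$ appearing in Theorem \ref{thm01} is positive, and the contribution of $\beta$ alone is $\frac{1}{1-\beta} \geq \log|D|/A$, so
\[ \frac{\log|D|}{A} \,<\, \bigg(1 - \frac{1}{\sqrt{5}}\bigg)\frac{\log|D|}{2} + \frac{L'}{L}(1,\chi_D) + O(e^{3A}). \]
Weak uniformity means $\frac{L'}{L}(1,\chi_D) \leq \eps \log|D|$ for every fixed $\eps>0$ and sufficiently large $|D|$. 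Fixing $A$ as a constant so that $e^{3A} = O(1)$ is harmless after dividing by $\log|D|$, I deduce
$\frac{1}{A} \leq \frac{1}{2}\big(1 - 1/\sqrt{5}\big) + o(1) = \frac{1}{\sqrt 5\,\varphi} + o(1)$,
using the identity $\frac{2}{1-1/\sqrt 5} = \frac{\sqrt 5 (\sqrt 5 +1)}{2} = \sqrt 5\,\varphi$. Hence $A \geq \sqrt{5}\,\varphi - o(1)$, contradicting $A < \sqrt{5}\,\varphi$, and no such real zero exists.

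For the second assertion, fix $\delta>0$ and assume $D$ is $|D|^\delta$-smooth. Theorem \ref{thm01B}, applied with $q = |D|$ and $\chi = \chi_D$ (which is real, hence self-conjugate), tells me the only possible zero in $\mathcal{R}_A$ with $A \ll 1/\delta$ is the Siegel zero $\beta$, and moreover
\[ \frac{L'}{L}(1,\chi_D) = \frac{1}{1-\beta} + O\big(\delta^{1/2}\log|D|\big). \]
Weak $abc$ then forces $\frac{1}{1-\beta} \leq (C\delta^{1/2}+o(1))\log|D|$ for an absolute $C>0$ (the implied constant of the $O$); inverting yields
$\beta \leq 1 - \frac{C^{-1}\delta^{-1/2} + o_\delta(1)}{\log|D|}$,
where the $o_\delta(1)$ absorbs the $|D|\to\infty$ decay at each fixed $\delta$. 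Setting $m := C^{-1}$ pushes $\beta$ out of the target region, and Theorem \ref{thm01B} has already ruled out every other zero there.

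The principal subtlety is the one-sidedness of Theorem \ref{thm03}: weak $abc$ controls $\frac{L'}{L}(1,\chi_D)$ only from above, so the argument must lean on the positivity of $\Re(1/(1-\rho))$ in Theorem \ref{thm01} and on the sign of $1/(1-\beta)$ in Theorem \ref{thm01B}, both of which happen to point the right way. The remaining work is careful bookkeeping of the various $o(\cdot)$-terms, keeping the $|D|\to\infty$ decay produced by weak $abc$ separate from the fixed parameter $\delta$ so that the stated $o_\delta(1)$ emerges cleanly after inversion.
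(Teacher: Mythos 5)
Your argument is correct and follows essentially the same route as the paper: the first assertion comes from isolating the contribution of a potential real zero near $s=1$ in the lower bound of Proposition~\ref{AnLem}/Theorem~\ref{thm01}, balancing against the upper bound on $\frac{L'}{L}(1,\chi_D)$ from weak uniform $abc$; the second combines Theorem~\ref{thm01B} with the same $abc$ input and extends the real zero-free interval to the box $|t|\leq 1$ via the Chang-type zero-free region. The only cosmetic difference is that the paper applies Proposition~\ref{AnLem} directly with $\mathcal{S}=\{\beta\}$ (giving an $O(1)$ error rather than the $O(e^{3A})$ of Theorem~\ref{thm01}, though this is immaterial once $A$ is fixed), and solves the resulting quadratic in $1-\sigma$; your bookkeeping of the $o(1)$ and $o_\delta(1)$ terms, and your observation about the one-sidedness of Theorem~\ref{thm03} interacting with the positivity of $\Re(1/(1-\varrho))$, are both accurate.
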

 
 See subsection \ref{ssecMC1} for a proof. Next we give explicit upper and lower bounds on several key analytic quantities:
 
 \begin{cor}\label{MC2}
  For negative fundamental discriminants $D$ we have:
  \begin{enumerate}[label=(\roman*)]
   \item $\displaystyle \frac{3}{\sqrt{5}} \log|D|+O(1) \,\leq\, \hei(j(\tau_D)) \,\stackrel{*}{\leq}\, (1+o(1))\, 3\log|D|$.
   \item $\displaystyle \frac{1}{2\sqrt{5} }\log|D|+O(1) \ \leq\ \sum_{\varrho(\chi_D)}\frac{1}{\varrho} \ \stackrel{*}{\leq}\ (1 + o(1))\, \frac{1}{2}\log|D|$.
   \item $\displaystyle (1 + o(1))\, \frac{\pi}{3}\frac{\sqrt{|D|}}{\log|D|} \sum_{(a,b,c)}\frac{1}{a} \,\stackrel{*}{\leq}\, h(D) \,\leq\, \text{\small$\Bigg(\sqrt{5} +O\bigg(\frac{1}{\log|D|}\bigg)\Bigg)$}\, \frac{\pi}{3}\frac{\sqrt{|D|}}{\log|D|} \sum_{(a,b,c)}\frac{1}{a}\hspace{-30pt}$
  \end{enumerate}
  where the starred inequalities ``$\stackrel{*}{\leq}$''  are conditional on the weak uniform $abc$-conjecture.
  Moreover, if $D$ is $|D|^{o(1)}$-smooth then these starred inequalities become (asymptotic) starred equalities.
  If we assume the Generalized Riemann Hypothesis then we obtain these asymptotic equalities for all $D$, with error term a factor of 
  $O(\frac{\log\log|D|}{\log|D|} )$.\footnote{For example, $\hei(j(\tau_D))  = 3 \log |D|+O( \log\log|D| )$ assuming the Generalized Riemann Hypothesis.}
 \end{cor}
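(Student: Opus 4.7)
The plan is to reduce all three parts to two-sided bounds on $\frac{L'}{L}(1,\chi_D)$. Part (i) is immediate from Theorem \ref{thm02} rewritten as $\hei(j(\tau_D)) = 6\,\frac{L'}{L}(1,\chi_D) + 3\log|D| + O(1)$. For (ii) I would invoke the classical Hadamard product identity
\[ \frac{L'}{L}(s,\chi) = -\tfrac{1}{2}\log\tfrac{q}{\pi} - \tfrac{1}{2}\tfrac{\Gamma'}{\Gamma}\!\left(\tfrac{s+a}{2}\right) + B(\chi) + \sum_{\varrho}\!\left(\tfrac{1}{s-\varrho}+\tfrac{1}{\varrho}\right), \]
with $\Re B(\chi) = -\sum_\varrho \Re(1/\varrho)$; taking real parts at $s=1$ cancels $B(\chi)$ against $\sum_\varrho \Re(1/\varrho)$, the gamma and $\log\pi$ factors are $O(1)$, and the functional-equation pairing $\varrho\leftrightarrow 1-\overline{\varrho}$ (for the real character $\chi_D$) identifies $\sum_\varrho \Re\frac{1}{1-\varrho}$ with the real sum $\sum_\varrho \frac{1}{\varrho}$. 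This yields the clean identity
\[ \sum_{\varrho(\chi_D)}\frac{1}{\varrho} = \frac{L'}{L}(1,\chi_D) + \tfrac{1}{2}\log|D| + O(1). \]
Part (iii) then follows by substituting into the class-number formula \eqref{hdbtt}.

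The unconditional lower bound on $\frac{L'}{L}(1,\chi_D)$ comes directly from Theorem \ref{thm01}: fixing $A\geq 1$ and dropping the non-negative sum $\sum_{\varrho\cap\mathcal{R}_A}\Re\frac{1}{1-\varrho}\geq 0$ gives
\[ \frac{L'}{L}(1,\chi_D) \;\geq\; -\tfrac{1}{2}\bigl(1-\tfrac{1}{\sqrt{5}}\bigr)\log|D| - O(1). \]
The matching conditional upper bound $\frac{L'}{L}(1,\chi_D) \leq o(\log|D|)$ under the weak uniform $abc$-conjecture is the second bullet of Theorem \ref{thm03}. Plugging these into the three identities above produces all stated estimates; the elementary arithmetic $1-(1-1/\sqrt{5}) = 1/\sqrt{5}$ and $\sqrt{5}\cdot \tfrac{1}{\sqrt{5}} = 1$ accounts for the constants $3/\sqrt{5}$, $1/(2\sqrt{5})$, and $\sqrt{5}$ appearing on the unconditional sides of (i)--(iii).

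For the smooth refinement, if $D$ is $|D|^{o(1)}$-smooth I would apply Theorem \ref{thm01B} with $\delta=\delta(|D|)\to 0$: this shows that the only possible zero of $L(s,\chi_D)$ in the relevant box is the Siegel zero $\beta=\beta_{\chi_D}$, and gives $\Re\frac{L'}{L}(1,\chi_D) = (1-\beta)^{-1}+o(\log|D|)$. Chang's wide zero-free region for smooth moduli (underlying Theorem \ref{thm01B}) forces $\beta$ so far from $1$ that $1/(1-\beta)=o(\log|D|)$ as well, so $\frac{L'}{L}(1,\chi_D)=o(\log|D|)$ holds \emph{unconditionally} and the starred inequalities become asymptotic equalities. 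Under GRH the standard bound $\frac{L'}{L}(1,\chi_D) = O(\log\log|D|)$, applied to the three identities, upgrades the error terms in (i)--(ii) to $O(\log\log|D|)$ and the prefactor in (iii) to $1+O(\log\log|D|/\log|D|)$.

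The main subtlety I expect is the unconditional smooth case: Theorem \ref{thm01B} alone leaves the potential Siegel-zero term $1/(1-\beta_{\chi_D})$ uncontrolled, and eliminating it requires Chang's wide zero-free region to push $\beta_{\chi_D}$ strictly below $1-\omega(|D|)/\log|D|$ for some $\omega(|D|)\to\infty$. A secondary technical point is the Hadamard identity used for (ii) --- collecting the gamma-factor contribution, the $B(\chi)$ constant, and the conditionally convergent zero-sum into a single $O(1)$ error term is standard but must be written out carefully. Everything else is a direct substitution into the theorems already established.
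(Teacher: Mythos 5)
Your derivations of parts (i), (ii), (iii) and the GRH upgrade are essentially right and match the route the paper indicates (Theorem~\ref{thm02} and Theorem~\ref{thm01} for the lower bounds, Theorem~\ref{thm03} for the starred upper bounds, \eqref{pi01} for (ii), \eqref{hdbtt} for (iii)). Two minor remarks: for (ii) the paper would have you invoke \eqref{pi01} directly rather than re-derive it from the Hadamard formula, and you should note that for the real character $\chi_D$ the sum $\sum_{\varrho}\Pi_0(\varrho)/4$ literally \emph{is} $\sum_{\varrho}\Re(1/\varrho)=\sum_\varrho 1/\varrho$, so \eqref{pi01} hands you the identity $\sum_\varrho 1/\varrho = \frac{L'}{L}(1,\chi_D)+\tfrac12\log|D|+O(1)$ with no extra work.

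Your handling of the smooth refinement, however, contains a genuine error. You assert that Chang's zero-free region forces the exceptional real zero $\beta_{\chi_D}$ far enough from $1$ that $1/(1-\beta_{\chi_D})=o(\log|D|)$, and conclude that the asymptotic equalities hold \emph{unconditionally} for smooth~$D$. This is not what Chang's theorem says: as quoted in subsection~\ref{ssecCZFR}, the wide zero-free region for smooth moduli holds ``with the possible exception of a simple, real zero'', i.e.\ it does \emph{not} exclude the Siegel zero, which can still be arbitrarily close to $1$. Consequently Theorem~\ref{thm01B} gives $\Re\frac{L'}{L}(1,\chi_D) = \frac{1}{1-\beta}+O(\delta^{1/2}\log|D|)$ with $\frac{1}{1-\beta}\geq 0$ uncontrolled from above. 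What this actually buys you is a one-sided improvement: since $\frac{1}{1-\beta}\geq 0$ may be dropped, the \emph{lower} bound improves to $\frac{L'}{L}(1,\chi_D)\geq -o(\log|D|)$ unconditionally (this is \eqref{liminfd}, restated as \eqref{eqer}). The matching \emph{upper} bound $\frac{L'}{L}(1,\chi_D)\leq o(\log|D|)$ still requires weak uniform $abc$ via Theorem~\ref{thm03}. Combining the two gives the asymptotic equality \emph{conditionally on $abc$} for smooth $D$ --- which is precisely why the paper writes ``starred equalities'' and keeps the star: the smoothness hypothesis sharpens the lower bound to match the conditional upper bound, it does not remove the conditionality.
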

 
 This corollary follows from Theorems \ref{thm02} and \ref{thm03}; part (ii) is obtained from \eqref{pi01}, and part (iii) from \eqref{hdbtt}. (We omit the details.)
 In each case the upper and lower bounds in Corollary \ref{MC2} differ by an artificial  multiplicative factor that is $\sim \sqrt{5}$ arising from our proof 
 (see Lemma \ref{cuteineq} (ii)), which may be improvable. Data supports part (i) of Corollary \ref{MC2}:
 
 \FloatBarrier
 \begin{figure}[hbt!]
  \centering
  \captionsetup{type=figure}
  \includegraphics[width=.9\textwidth]{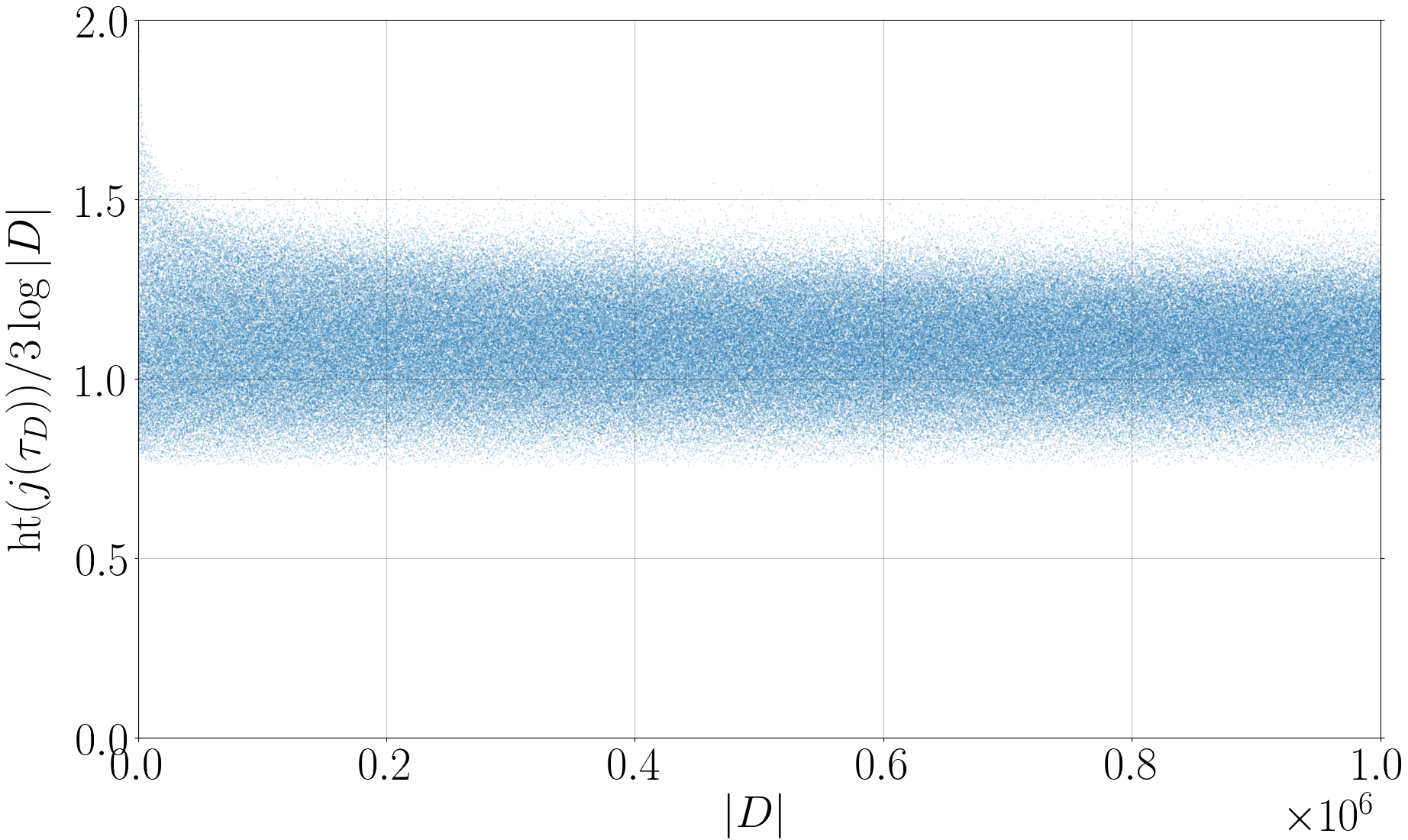}
  \captionof{figure}{Graph of $\frac{\hei(j(\tau_D))}{3\log|D|}$ for $ -10^6\leq D\leq 0$.}
  \label{fig1}
 \end{figure}
 
 The perceptible slight bias towards $>1$ in Figure \ref{fig1} can be analyzed by rewriting \eqref{L'/L} as 
 \begin{equation*}
  \frac{\hei(j(\tau_D))}{3\log|D|} = 1 + \frac{2( \frac{L'}{L}(1,\chi_D) -C+o(1))}{\log |D|}.
 \end{equation*}
 Therefore this bias towards $>1$ in Figure \ref{fig1} must stem from  $\frac{L'}{L}(1,\chi_D) - C$ being usually positive. We verify this observation in Figure \ref{fig2}, by noting that $\frac{L'}{L}(1,\chi_D)$ is usually larger than $C = -1.057770\ldots$.
  
 \begin{figure}[hbt!]
  \centering
  \captionsetup{type=figure}
  \includegraphics[width=1\textwidth]{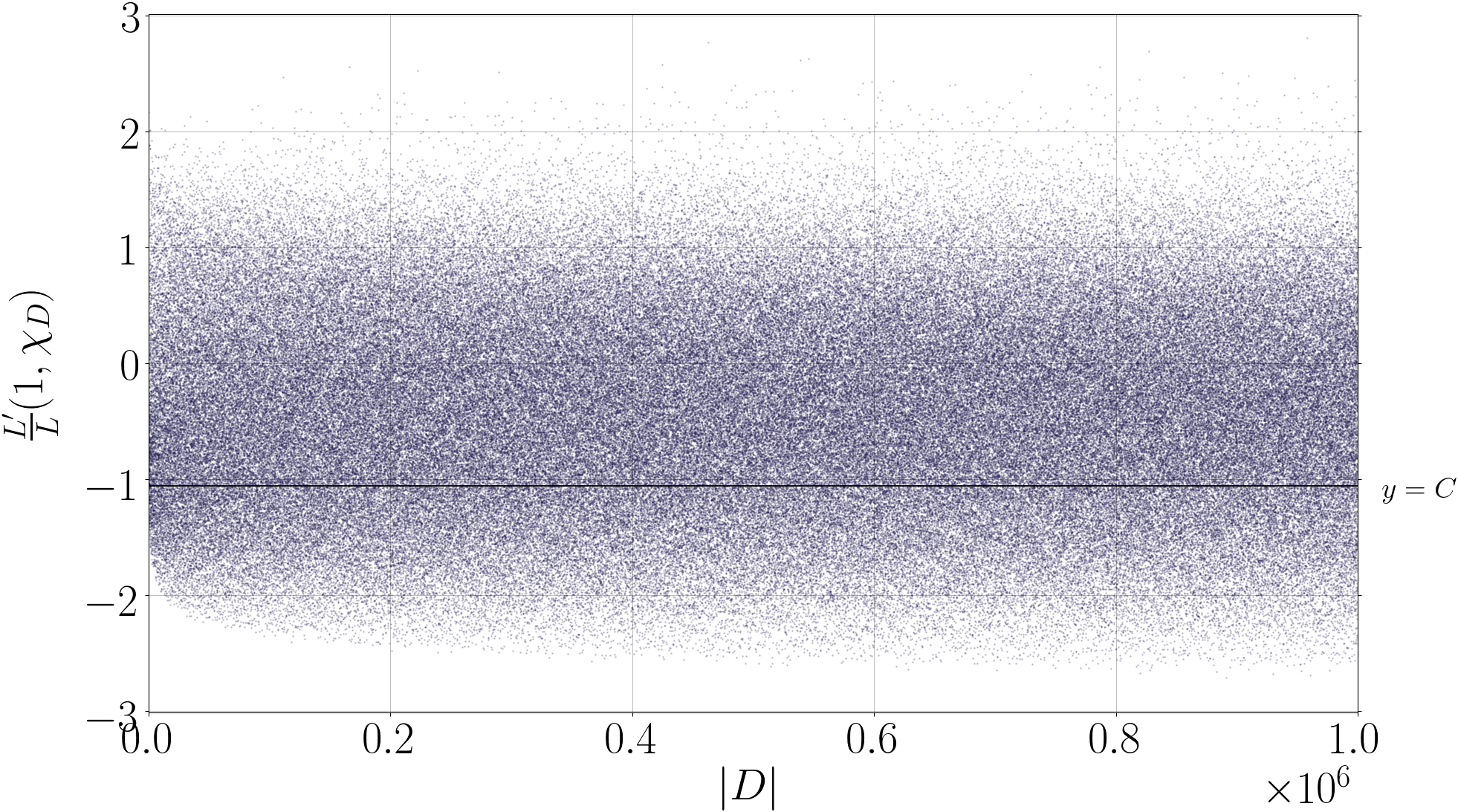}
  \captionof{figure}{Graph of $\frac{L'}{L}(1,\chi_D)$, for $-10^6 \leq D < 0$.}
  \label{fig2}
 \end{figure}
 
 Lastly, an appendix on the Hadamard factorization of $L$-functions following Lucia \cite{luciaHDM} is included, justifying a formula we state in section \ref{sec2}.
 
 \begin{ntt}
  Let $f,g: [x_0,+\infty) \to \R_{+}$ be positive real-valued functions defined in $[x_0,+\infty)$ for some $x_0\in\R$. We write $f\sim g$, $f=o(g)$, or $f = O(g)$, if $|f(x)/g(x)|$ goes to $1$, $0$, or stays bounded as $x\to +\infty$, respectively. If $f = O(g)$, we also write $f \ll g$. If $f\ll g$ and $g\ll f$, then we write $f\asymp g$.
 \end{ntt}

\section{Definitions and notation}\label{ssecdefs}
 We review some notation and results that will be used in this paper:
 
\subsection{Heegner points}\label{ss12}
 We denote the binary, primitive quadratic form
 \[
 Q(x,y) = ax^2 + bxy + cy^2 \in \Z[x,y]
 \]
  by $Q = (a,b,c)$ which has discriminant $d = b^2 - 4ac$. If $d<0$ then we will assume that $a>0$ so that $Q$ is positive-definite.
  Two forms $Q_1$, $Q_2$ are \emph{equivalent} if 
  \[
  Q_1(x,y) = Q_2(\alpha x + \beta y, \gamma x + \delta y) \text{  for some } \begin{psmallmatrix} \alpha & \beta \\ \gamma & \delta \end{psmallmatrix} \in \SL_2(\Z).
  \]
    For every non-zero discriminant $d\equiv 0$ or $1 \pmod{4}$, we have the principal form
 \begin{equation}
  Q_{1,d}(x,y) := \begin{cases}
    x^2 - \frac{d}{4}y^2\phantom{x^2 + xy + \frac{1-d}{4}y^2}\negphantom{\text{$x^2 - \frac{d}{4}y^2$}} \ \,\text{ if } d \equiv 0 \pmod{4}, \\
    x^2 + xy + \frac{1-d}{4}y^2 \ \text{ if } d \equiv 1 \pmod{4}.
   \end{cases} \label{grundform}
 \end{equation}
 For $d<0$, we say that $(a,b,c)$ is \emph{reduced} if $-a<b\leq a<c$ or $0\leq b\leq a = c$; every quadratic form of discriminant $d$ is equivalent to a single reduced form. If $Q$ is reduced, then $a\leq \sqrt{|d|/3}$.
 
 \noindent
 \edef\myindent{\the\parindent} 
 \begin{flushleft}
 \begin{minipage}{0.61\textwidth}\setlength{\parindent}{\myindent}
  {
 Each binary quadratic form $(a,b,c)$ with $d<0$   is associated with  
 a \emph{CM point} 
 \[
 \tau_{(a,b,c)} = \frac{-b+\sqrt{d}}{2a} \in \uhp  := \{z \in \C ~|~ \Im(z)>0\} .
 \]
 $(a,b,c)$ is reduced if and only if $\tau_{(a,b,c)}$ lies in the  fundamental domain, $\FD$, depicted on the right.   \emph{Heegner points} are the CM points associated to the reduced forms of \emph{fundamental} discriminants $D<0$; i.e., $D$'s which are the discriminant of some 
   \unskip\parfillskip 0pt \par
  }\vskip.0\baselineskip
 \end{minipage}\, 
 \begin{minipage}{0.37\textwidth}
 \begin{flushright}
   \vspace{-1em}
   
   \begin{tikzpicture}[scale=1.2, every node/.style={scale=1.2}]
    \draw[help lines, ystep=1, xstep=1, black!20] (-2.1,-.1) grid (2.1,4.1);
    \draw[->, black] (0,-.1)--(0,4.1);
    \draw[->, black] (-2.2,0)--(2.2,0);
    
    \foreach \y in {-1, -.5, 0, .5, 1}
    \draw (2*\y,0)--(2*\y,-0.1);
    \foreach \x in {-1, -0.5, 0, 0.5, 1}
    \draw (2*\x,-0.3) node[anchor=south west, scale=.5] {\x};
    
    \fill[pattern=north east lines, pattern color=black!50] (-1,4) -- (1,4) -- (1,1.732) to[out=150,in=0] (0,2) to[out=180,in=30] (-1,1.732) -- (-1,4);    
    \draw[-, dotted, black] (-2,0) to[out=90,in=180] (0,2) to[out=0,in=90] (2,0);
    \draw[-, black] (-1,1.732)--(-1,4);
    \draw[-, dashed, black] (1,1.732)--(1,4);
    \draw[-, dashed, black] (1,1.732) to[out=151.5,in=-1.5] (0,2);
    \draw[-, black] (0,2) to[out=181.5,in=28.5] (-1,1.732);
    
    \draw (-1,1.732) node[scale=0.7] {\footnotesize{$\bullet$}};
    \draw (1,1.732) node[scale=0.7] {$\circ$};
    \draw (-1.5, 1.732 +.25) node[preaction={fill, white}, scale=0.9] {\footnotesize{$e^{\frac{2\pi i}{3}}$}};
    \draw (1.5, 1.732 +.25) node[preaction={fill, white}, scale=0.9] {\footnotesize{$e^{\frac{\pi i}{3}}$}};
    
    \draw (.5, 3.5) node[preaction={fill, white}, scale=1] {\footnotesize{$\FD$}};
   \end{tikzpicture}
   
   \vspace{-1em}
 \end{flushright}
 \vskip.0\baselineskip
 \end{minipage}
 \end{flushleft}
 quadratic number field. Write $\Lambda_D$ for the set of Heegner points of $D$, and $\Cl(D)$ for the ideal class group of $\Q(\sqrt{D})$. By the classical correspondence
 \begin{equation}
  \Cl(D) \ni \A \ \ \leftrightarrow\ \ \begin{array}{@{}c@{}} \text{reduced } (a,b,c) \\[.2em] (b^2 - 4ac = D) \end{array} \ \ \leftrightarrow\ \ \tau = \dfrac{-b+\sqrt{D}}{2a} \in \Lambda_D \ (\subseteq \FD), \label{corresp}
 \end{equation}
 we may index Heegner points by $\Cl(D)$, namely $\Lambda_D = \{\tau_{\A} ~|~ \A\in\Cl(D)\}$. The Heegner point associated to the trivial ideal class in $\Cl(D)$, which corresponds to the principal form \eqref{grundform}, is denoted $\tau_D$.

\subsection{Singular moduli}\label{ss15}
 For $\tau\in\uhp$, write $q = q_\tau := e^{2\pi i \tau}$. The \emph{$q$-expansion} of the \emph{$j$-invariant function} has the form
 \begin{equation}
  j(\tau) := \frac{\Big(1+ 240\, \sum_{n\geq 1} \big(\sum_{d\mid n} d^3\big) q^n\Big)^3}{q\, \prod_{n\geq 1} (1-q^n)^{24}} = \frac{1}{q} + \sum_{n\geq 0} c(n) q^n, \label{qexpjinv}
 \end{equation}
 with $c(0) = 744$, $c(1) = 196884$, and $\Z_{\geq 1} \ni c(n) \sim \frac{1}{\sqrt{2}} e^{4\pi\sqrt{n}}n^{-3/4}$. This is the unique modular function with respect to $\SL_2(\Z)$ of weight $0$, holomorphic in $\uhp$, satisfying $j(e^{2\pi i/3})= 0$, $j(i) = 1728$, and having a simple pole at $i\infty$. The values taken by $j(\tau)$ at CM points $\tau\in \uhp$ are called \emph{singular moduli}.
 
 For $D<0$ a fundamental discriminant, write $\hilb{D}$ for the \emph{Hilbert class field} of $\Q(\sqrt{D})$, which is the maximal unramified abelian extension of $\Q(\sqrt{D})$. Then:
 \begin{itemize}
  \item $\hilb{D} = \Q(\sqrt{D}, j(\tau_D))$ (and $[\hilb{D}: \Q(\sqrt{D})] = [\Q(j(\tau_D)) : \Q] = h(D)$);\smallskip
  
  \item $\{ j(\tau) ~|~ \tau \in \Lambda_D \}$ is the complete set of $\mathrm{Gal}(\overline{\Q}/\Q(\sqrt{D}))$-conjugates of $j(\tau_D)$;\smallskip
  
  \item $j(\tau_D)$ is an algebraic integer.
 \end{itemize}
 
\subsection{Heights and conductors}\label{ssec16}
 For a number field $K/\Q$ and a place $v$ on $K$, write $K_v$ for the completion of $K$ with respect to $v$. For non-archimedean $v$, write $\mathfrak{p}_{v}$ for the prime ideal in $\mathcal{O}_K$ associated to $v$, $p_v$ for the positive rational prime below $\mathfrak{p}_v$, and $f_v$ for the degree of the residual extension $K_v/\Q_{p_v}$. If $v$ is archimedean, let $\iota_v: K\hookrightarrow \C$ denote an embedding which induces $v$. For $x\in K^{\times}$, define the \emph{normalized absolute value} $\norm{\cdot}_{v}$ as:
 \[ \norm{x}_{v} := \begin{cases}
                     |\iota_v(x)|\negphantom{|\iota_v(x)|}\phantom{p_v^{-f_v \,\mathrm{ord}_{\mathfrak{p}_v}(x)}} \quad \text{ if } K_v \simeq \R, \\
                     |\iota_v(x)|^2\negphantom{|\iota_v(x)|^2}\phantom{p_v^{-f_v \,\mathrm{ord}_{\mathfrak{p}_v}(x)}} \quad \text{ if } K_v \simeq \C, \\
                     p_v^{-f_v \,\mathrm{ord}_{\mathfrak{p}_v}(x)} \quad \text{ if } v\text{ is non-archimedean},
                    \end{cases}
 \]
 where $\mathrm{ord}_{\mathfrak{p}_v}(x)$ ($=: v(x)$) denotes the power of $\mathfrak{p}_v$ appearing in the prime factorization of the principal fractional ideal $(x)\subseteq K$, and $|\cdot|$ is the usual absolute value in $\C$.
 
 Let $\mathcal{M}_K$ denote the set of inequivalent places of $K$ satisfying the \emph{product formula} $\prod_{v\in\mathcal{M}_K} \norm{x}_{v} = 1$ for all $x\in K^{\times}$, and write $\mathcal{M}^{\text{non}}_K \subseteq \mathcal{M}_K$ for the subset of the non-archimedean places. For a point $P = [x_0:\ldots:x_n] \in \mathbb{P}_{K}^{n}$ in the projective $n$-space over $K$, we define its \emph{(na\"{i}ve, absolute, logarithmic) height} by
 \[ \hei(P) := \frac{1}{[K:\Q]} \sum_{v\in\mathcal{M}_{K}} \log \max_{i\leq n}\{\norm{x_i}_v\}, \]
 and its \emph{(logarithmic) conductor} by
 \[ \mathcal{N}_K(P) := \frac{1}{[K:\Q]} \sum_{\substack{v\in\mathcal{M}^{\mathrm{non}}_{K} \\ \exists i,j \leq n ~\mathrm{s.t.}\\ v(x_i)\neq v(x_j)}} f_v \log(p_v). \]
 Neither $\hei$ nor $\mathcal{N}$ depend on the choice of representatives for $P \in \mathbb{P}_{K}^{n}$, since, for any $c\in K^{\times}$, we have $\hei(P) = \hei(cP)$ from the product formula, and $\mathcal{N}_K(P) = \mathcal{N}_K(cP)$ since the sum runs over the same places. Moreover, the height does not depend on the choice of the base field (provided its $\mathbb{P}^{n}$ contains $P$); however, the conductor does.

 Finally, for $x \in K^{\times}$, write $\hei(x) := \lht{x:1}$. If $\alpha\in\overline{\Q}^{\times}$ is integral, then
 \begin{equation}
  \hei(\alpha) = \frac{1}{|\mathcal{A}|}\sum_{\alpha^{*} \in \mathcal{A}} \log^{+}|\alpha^{*}|, \label{heiint}
 \end{equation}
 where $\log^{+}x := \log\max\{1, x\}$ for $x\in \R_{>0}$, $\mathcal{A}$ is the complete set of $\mathrm{Gal}(\overline{\Q}/\Q)$-conjugates of $\alpha$, and the absolute value is being taken on some fixed embedding $\iota: \overline{\Q} \hookrightarrow \C$ (to which the value of $\hei(\alpha)$ is independent).

\section{On \texorpdfstring{$\Re(\frac{L'}{L}(1,\chi))$}{Re(L'/L(1,chi))} and zeros near \texorpdfstring{$s=1$}{s=1}}\label{sec2}
 Consider primitive characters $\chi\pmod{q}$ for $q\geq 2$, and write $\varrho(\chi)$ for the set of non-trivial zeros of $L(s,\chi)$ counted with multiplicity. In this section we are going to prove the following two propositions:
 
 \begin{prop}\label{AnLem}
  Let $\mathcal{S}$ be any finite subset of non-trivial zeros of $L(s,\chi)$ (including the empty set). Then:
  \[ \sum_{\varrho \in \mathcal{S}} \Re\bigg(\frac{1}{1-\varrho}\bigg) < \bigg(1- \frac{1}{\sqrt{5}}\bigg) \frac{1}{2} \log q + \Re\left(\frac{L'}{L}(1,\chi)\right) + \bigg(1+ \frac{1}{\sqrt{5}}\bigg)\frac{2|\mathcal{S}|+3}{2} - 1. \]
 \end{prop}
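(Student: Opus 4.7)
My plan is to build on the Hadamard factorization of the completed $L$-function $\xi(s,\chi) = (q/\pi)^{(s+\mathfrak{a})/2}\Gamma((s+\mathfrak{a})/2)L(s,\chi)$, as recalled in the appendix. Since $\xi(s,\chi)$ is entire of order one, Hadamard gives $\xi(s,\chi) = e^{A(\chi)+B(\chi)s}\prod_\varrho(1-s/\varrho)\,e^{s/\varrho}$; taking the logarithmic derivative, evaluating at $s=1$, taking real parts, and using the classical identity $\Re B(\chi) = -\sum_\varrho \Re(1/\varrho)$ produces the core equation
\[
\sum_{\varrho}\Re\frac{1}{1-\varrho} \;=\; \Re\frac{L'}{L}(1,\chi) \;+\; \frac{1}{2}\log\frac{q}{\pi} \;+\; \frac{1}{2}\Re\psi\!\left(\frac{1+\mathfrak{a}}{2}\right),
\]
where $\psi=\Gamma'/\Gamma$. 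Since every summand on the left is strictly positive, restricting the sum to $\mathcal S$ immediately yields an upper bound for $\sum_{\varrho\in\mathcal S}\Re\frac{1}{1-\varrho}$ with coefficient $\tfrac{1}{2}$ on $\log q$; the strict inequality ``$<$'' in the statement is automatic from the existence of a positive tail outside any finite $\mathcal S$.

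The real work is sharpening this coefficient from $\tfrac{1}{2}$ to $(1-1/\sqrt{5})\cdot\tfrac{1}{2}$. I would apply the same log-derivative identity also at an auxiliary real point $s=\sigma>1$, where $\sum_\varrho \Re(\sigma-\varrho)^{-1}$ is again a sum of positive terms equal to $\Re(L'/L)(\sigma,\chi) + \tfrac{1}{2}\log(q/\pi) + O(1)$, with $\Re(L'/L)(\sigma,\chi) \geq \zeta'/\zeta(\sigma)$ since $|\chi(n)|\leq 1$. Combining the two identities, I would for each $\varrho\in\mathcal S$ employ a sharp pointwise estimate comparing $\Re(1-\varrho)^{-1}$ to $\Re(\sigma-\varrho)^{-1}$ at a suitably $\varrho$-dependent choice of $\sigma$; optimizing this choice leads to a quadratic whose extremum occurs at a golden-ratio scaling, and this is precisely where the $1/\sqrt{5}$ arises (and where I expect Lemma \ref{cuteineq}(ii) enters). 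Each per-$\varrho$ application costs a slack of size $1+1/\sqrt{5}$, accounting for the $(1+1/\sqrt{5})\cdot 2|\mathcal S|/2$ contribution to the stated inequality, and the additional ``$+3$'' absorbs the $\tfrac{1}{2}\log\pi$ together with the digamma constants coming from the Gamma factors at $s=1$ and $s=\sigma$.

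The main obstacle will be establishing the pointwise inequality in the right form. Because $\Re\frac{1}{1-\varrho}\to\infty$ as $\varrho\to 1$, no estimate of the shape $\Re\frac{1}{1-\varrho}\leq \lambda\,\Re\frac{1}{\sigma-\varrho}+\mu$ can hold uniformly in $\varrho$ with $\lambda<1$ for a single fixed $\sigma$; the trick must be to let $\sigma$ track the location of $\varrho$ (equivalently, to compare the full-sum contribution at a $\varrho$-adapted height) in order for the sharp constant $\lambda = 1 - 1/\sqrt{5}$ to emerge from the extremum. Once this algebraic inequality is in hand, the summation over $\mathcal S$ and the bookkeeping against the two Hadamard identities is essentially mechanical.
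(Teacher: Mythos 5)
Your opening step — the log-derivative of the Hadamard product at $s=1$, giving $\sum_\varrho \Re\frac{1}{1-\varrho}=\Re\frac{L'}{L}(1,\chi)+\tfrac12\log q+O(1)$ — matches the paper's equation \eqref{pi01} exactly (the paper writes the left side as $\sum_\varrho \Pi_0(\varrho)/4$, but by the FE symmetry $\varrho\leftrightarrow 1-\overline{\varrho}$ the two are equal). You also correctly isolate the real work: lowering the $\tfrac12\log q$ coefficient to $(1-\tfrac1{\sqrt5})\cdot\tfrac12$. But from there your plan diverges from the paper and, I think, has a genuine gap.

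You diagnose that a pointwise comparison $\Re\frac{1}{1-\varrho}\le\lambda\,\Re\frac{1}{\sigma-\varrho}+\mu$ with fixed $\sigma>1$ and $\lambda<1$ cannot hold uniformly because the left side blows up as $\varrho\to 1$, and you therefore propose to let $\sigma$ track $\varrho$. That diagnosis is correct for a single-term comparison, but the paper's remedy is not a $\varrho$-dependent auxiliary point — it is a different pointwise comparison entirely. The functional equation forces zeros to come in groups $\{\varrho,\overline{\varrho},1-\varrho,1-\overline{\varrho}\}$, and the paper defines the \emph{pairing-up function} $\Pi_\eps(s)=\frac{1}{s+\eps}+\frac{1}{\overline s+\eps}+\frac{1}{1-s+\eps}+\frac{1}{1-\overline s+\eps}$ (equation \eqref{prupf}), which packages the whole group. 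Lemma \ref{cuteineq}(i) then shows that with the \emph{fixed} shift $\eps=\varphi-1$ one has $\Pi_0(s)>\Pi_{\varphi-1}(s)/(2\varphi-1)$ for \emph{all} $s$ in the critical strip; the binding case is not $\varrho\to 1$ (where both sides behave fine) but in the interior, and the optimum is governed by the relation $\eps(1+\eps)=1$, which is the defining equation of $\varphi-1$ and is where $2\varphi-1=\sqrt5$ comes from. So the golden-ratio extremum you anticipated is real, but the mechanism is a fixed-shift comparison of paired sums, not a $\varrho$-adapted $\sigma$.

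Concretely, the step that would fail in your write-up: if you compare only $\Re\frac{1}{1-\varrho}$ against $\Re\frac{1}{\sigma-\varrho}$ with $\sigma=\sigma(\varrho)$, you no longer have a single explicit-formula identity at a fixed auxiliary point to absorb all the shifted terms — the right-hand side becomes a sum of $\Re\frac{1}{\sigma(\varrho)-\varrho}$ over varying centers, which does not telescope back to $\Re\frac{L'}{L}(\sigma,\chi)+\tfrac12\log q$. To repair the argument you need to keep a fixed shift and instead group each $\varrho$ with its FE partner $1-\overline{\varrho}$ before comparing: the divergence of $\Re\frac{1}{1-\varrho}$ as $\varrho\to 1$ is then harmless because one proves a lower bound $\Pi_0>\Pi_{\varphi-1}/\sqrt5$ (not an upper bound), and the blow-up only makes that inequality easier. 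With this in hand, the paper's proof is: split $\sum_\varrho\Pi_0(\varrho)/4$ into the $\mathcal S$-part (kept as is, giving $\Re\sum_{\mathcal S}\frac{1}{1-\varrho}$ plus nonnegative leftovers) and the complement (replaced via Lemma \ref{cuteineq}(i) and then bounded by Lemma \ref{smllp}(ii) with $\eps=\varphi-1$), and the $(1+\tfrac1{\sqrt5})(2|\mathcal S|+3)/2-1$ tail falls out from $\Pi_{\varphi-1}(\varrho)/4\le 1/(\varphi-1)$ on the $\mathcal S$-terms plus the constants in Lemma \ref{smllp}(ii).
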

  
 \begin{prop}\label{AnLem2}
  Consider the region
  \begin{equation}
   \mathcal{B}_{f}(q) := \left\{s \in \C ~\bigg|~ \sigma > 1- \frac{1}{f(q)},\  |t| < \frac{1}{\sqrt{f(q)}} \right\}, \label{ZFR}
  \end{equation}
  where $f:\Z_{\geq 2}\to \R$ satisfies $2 \leq f(q) \leq 4\log q$. Then:
  \[ \Bigg|\Re\Bigg(\frac{L'}{L}(1,\chi) - \sum_{\varrho(\chi) \cap \mathcal{B}_{f}} \frac{1}{1-\varrho}\Bigg)\Bigg| < \Big(14.5 + 2|\varrho(\chi)\cap \mathcal{B}_f|\Big) \sqrt{f(q) \log q}. \]
 \end{prop}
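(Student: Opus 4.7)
The plan is to start from the partial-fraction expansion coming from the logarithmic derivative of the Hadamard product for $L(s,\chi)$, which in its real-part form (stated in Section~\ref{sec2} and justified in the appendix) reads
\[ \Re\frac{L'}{L}(s,\chi) = \sum_{\rho} \Re\frac{1}{s-\rho} - \frac{1}{2}\log\frac{q}{\pi} - \frac{1}{2}\Re\psi\!\left(\frac{s+\mathfrak{a}}{2}\right), \]
with $\mathfrak{a}\in\{0,1\}$ recording the parity of $\chi$. Evaluating at $s=1$ and isolating the zeros inside $\mathcal{B}_f(q)$ reduces the proposition to bounding
\[\sum_{\rho\notin\mathcal{B}_f}\Re\frac{1}{1-\rho} - \frac{1}{2}\log\frac{q}{\pi}\]
by a quantity of size $O\bigl((1+|\varrho(\chi)\cap\mathcal{B}_f|)\sqrt{f(q)\log q}\bigr)$.

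To achieve this I would compare the formula at $s=1$ with its evaluation at the auxiliary point $s_0 := 1+\eta$ on the real axis, where $\eta := 1/\sqrt{f(q)\log q}$ will ultimately balance the two competing error terms below. The Dirichlet series bound $|\frac{L'}{L}(s_0,\chi)| \leq -\zeta'(1+\eta)/\zeta(1+\eta) = 1/\eta + O(1)$ combined with the Hadamard formula at $s_0$ gives $\sum_\rho \Re\frac{1}{s_0-\rho} = \frac{1}{2}\log(q/\pi) + O(1/\eta)$. Subtracting the two evaluations cancels the $\frac{1}{2}\log(q/\pi)$ term, and the telescoping identity $\Re\frac{1}{1-\rho} - \Re\frac{1}{s_0-\rho} = \eta\,\Re\frac{1}{(1-\rho)(s_0-\rho)}$ decomposes the error into a sum over $\rho\in\mathcal{B}_f$ (bounded by $|\varrho(\chi)\cap\mathcal{B}_f|/\eta$ via $|s_0-\rho|\geq \eta$) plus a sum over $\rho\notin\mathcal{B}_f$ that we must control.

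For the latter I would apply the inequality $|1-\rho||s_0-\rho| \geq (1-\beta)(1+\eta-\beta) + \gamma^2$ (writing $\rho = \beta + i\gamma$) and split zeros $\rho\notin\mathcal{B}_f$ into ``Case A'' ($|\gamma|\geq 1/\sqrt{f(q)}$) and ``Case B'' ($|\gamma|< 1/\sqrt{f(q)}$, which then forces $1-\beta\geq 1/f(q)$). In Case A, a dyadic decomposition of $|\gamma|$ combined with the classical zero-count $\#\{\rho : T\leq|\gamma|<2T\} \ll T\log(qT)$ yields a total contribution of size $O(\eta\cdot f(q)\log q) = O(\sqrt{f(q)\log q})$ for the chosen $\eta$. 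The main obstacle will be Case B, where the per-zero bound alone is too weak; here one needs a short-interval zero-density estimate on the strip $|\gamma|<1/\sqrt{f(q)}$, or else to absorb the nearby zeros into the factor $|\varrho(\chi)\cap\mathcal{B}_f|$ appearing in the target bound, in order to avoid a spurious $(\log q)^2$ term. Tracking the explicit constants coming from the optimization of $\eta$, the auxiliary bound on $L'/L(s_0,\chi)$, and the zero-count estimate then yields the numerical coefficients $14.5$ and $2$ in the final inequality.
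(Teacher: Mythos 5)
Your blueprint — compare the Hadamard formula at $s=1$ with $s_0 = 1+\eta$ for $\eta := 1/\sqrt{f(q)\log q}$, bound $L'/L(s_0,\chi)$ by $1/\eta + O(1)$, and telescope the zero sums — is, in spirit, exactly the paper's strategy: the pairing-up function $\Pi_\eps(\varrho)$ (with $\eps = \eta$) is just a functional-equation-symmetrized version of your $\Re\frac{1}{s_0-\varrho}$. Your treatment of the zeros inside $\mathcal{B}_f$ and of Case A is essentially sound (though for $T<1$ the dyadic count $\#\{\rho : T\le|\gamma|<2T\}$ should be $\ll\log q$, not $T\log(qT)$; the end result $O(\sqrt{f(q)\log q})$ is the same). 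The genuine gap is Case B, and neither of the two fixes you float will close it. A per-zero bound times a zero count cannot reach $\sqrt{f(q)\log q}$: with $1-\beta\geq 1/f(q)$ you only get $|\Re\frac{1}{1-\rho}-\Re\frac{1}{s_0-\rho}| \leq 2/(1-\beta) \leq 2f(q)$ per zero, and with $O(\log q)$ zeros at height $<1$ this gives $O(f(q)\log q)$ — off by a full square root. Absorbing into $|\varrho(\chi)\cap\mathcal{B}_f|$ is also unavailable: a Case B zero $\rho$ has $\Re\rho\leq 1-1/f(q)$, so $\rho\notin\mathcal{B}_f$ by definition, and its functional-equation partner $1-\overline{\rho}$ lies in $\mathcal{B}_f$ only when $\Re\rho < 1/f(q)$; the zeros with $\Re\rho\in[1/f(q),\,1-1/f(q)]$ are transferred to neither.

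The missing idea is that the comparison of $\Re\frac{1}{1-\rho}$ with $\Re\frac{1}{s_0-\rho}$ in Case B must be \emph{multiplicative}, not additive. When $1-\beta\geq 1/f(q)$ one has $\Re\frac{1}{1-\rho} = \big(1+O(f(q)\,\eta)\big)\,\Re\frac{1}{s_0-\rho}$, since both $\frac{1-\beta}{1-\beta+\eta}$ and $\frac{(1-\beta+\eta)^2+\gamma^2}{(1-\beta)^2+\gamma^2}$ equal $1+O(\eta/(1-\beta))$. Because $\Re\frac{1}{s_0-\rho}>0$ for every $\rho$ and $\sum_\rho\Re\frac{1}{s_0-\rho} = \tfrac12\log q + O(1/\eta)$, Case B then contributes $O\big(f(q)\,\eta\,(\log q + 1/\eta)\big) = O(\sqrt{f(q)\log q})$, as required. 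This multiplicative control is precisely the content of the paper's Pairing-Up Lemma~\ref{cuteineq}(ii), $|\Pi_0(s)-\Pi_\eps(s)| < 5M\eps\,\Pi_\eps(s)$ on $\mathcal{R}_1\cup\mathcal{R}_2\cup\mathcal{R}_3$, which combined with Lemma~\ref{smllp}(i) produces the bound on $S_1$ in the paper's proof. Once you supply that lemma, your skeleton is the paper's argument with the $\Pi_\eps$ notation unrolled.
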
 
 
 Theorem \ref{thm01} follows directly from Proposition \ref{AnLem} together with the classical log-free zero-density estimate (see Chapter 18 of Iwaniec--Kowalski \cite{iwankowa04}), which in particular states that
 \[ \varrho(\chi) \cap \bigg\{s\in\C ~\bigg|~ \sigma \geq 1-\frac{A}{\log q},\ |t|\leq 1\bigg\} \ll e^{cA}, \]
 where we can take $c=3$. Theorem \ref{thm01B} on the other hand will be proved using Proposition \ref{AnLem2} in subsection \ref{ssecCZFR}.

\subsection{Lemmas}\label{ssec23}
 Our starting point is the following formula:
 \begin{equation}
  \frac{L'}{L}(s,\chi) = \bigg(\sum_{\varrho(\chi)}\frac{1}{s-\varrho}\bigg) -\frac{1}{2}\log\bigg(\frac{q}{\pi}\bigg) - \frac{1}{2}\frac{\Gamma'}{\Gamma}\bigg(\frac{s+a_{\chi}}{2}\bigg), \label{ch12dav}
 \end{equation}
 where $a_{\chi} := \frac{1}{2}(1-\chi(-1))$ (see Appendix \ref{appA} for details). From the functional equation of $L(s,\chi)$, we have that, if $\varrho \in \{s\in\C ~|~ 0<\sigma < 1\}$ is a zero of $L(s,\chi)$, then $\overline{\varrho}$, $1-\varrho$ are zeros of $L(s,\overline{\chi})$, and $1-\overline{\varrho}$ is a zero of $L(s,\chi)$. Thus, by noting that $\sum_{\varrho(\chi)} (s-\varrho)^{-1} = \sum_{\varrho(\chi)} (\overline{\varrho} + (s-1))^{-1}$, we get
 \begin{equation}
  \sum_{\varrho(\chi)}\frac{\Pi_{\sigma-1}(\varrho)}{4} = \frac{1}{2}\log\bigg(\frac{q}{\pi}\bigg) + \Re\bigg(\frac{L'}{L}(\sigma,\chi)\bigg) + \frac{1}{2}\frac{\Gamma'}{\Gamma}\bigg(\frac{\sigma+a_{\chi}}{2}\bigg), \label{reLL}
 \end{equation}
 where $\Pi$ is the \emph{pairing-up function}:
 \begin{equation}
  \Pi_{\eps}(s) := \frac{1}{s + \eps} + \frac{1}{\overline{s} + \eps} + \frac{1}{1-s + \eps} + \frac{1}{1-\overline{s} + \eps}, \label{prupf}
 \end{equation}
 defined for $s,\eps \in \C$ such that $\eps \neq -s, -\overline{s}, -1+s, -1+\overline{s}$. In this notation, by setting $\sigma = 1$ in \eqref{reLL} and using the special values $\frac{\Gamma'}{\Gamma}(1) = -\gamma$ and $\frac{\Gamma'}{\Gamma}(\frac{1}{2}) = -\gamma - 2\log 2$, it follows that
 \begin{equation}
  \sum_{\varrho(\chi)} \frac{\Pi_{0}(\varrho)}{4} = \frac{1}{2}\log q + \Re\left(\frac{L'}{L}(1,\chi)\right) - \frac{1}{2}\Big(\gamma + \log 2\pi + \chi(-1)\log 2 \Big). \label{pi01}
 \end{equation}
 The next lemma estimates \eqref{pi01} for small perturbations of $\eps$ in $\Pi_{\eps}(\varrho)$.
 
 \begin{lem}\label{smllp}
  For $0< \eps < .85$, we have: \smallskip
  \begin{enumerate}[label=(\roman*)]
   \item $\displaystyle\ \sum_{\varrho(\chi)} \frac{\Pi_{\eps}(\varrho)}{4} < \frac{1}{2}\log q + \frac{1}{\eps}$; \smallskip
    
   \item $\displaystyle\ \Bigg|\sum_{\varrho(\chi)} \frac{\Pi_{\eps}(\varrho)}{4} - \frac{1}{2}\log q + \frac{1}{2}\Big(\gamma + \log 2\pi + \chi(-1)\log 2 \Big)\Bigg| < 1 + \frac{1}{\eps}$.
  \end{enumerate}
 \end{lem}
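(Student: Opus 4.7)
The plan is to start from \eqref{reLL} with $\sigma = 1+\eps$, which gives the backbone identity
\[
\sum_{\varrho(\chi)}\frac{\Pi_\eps(\varrho)}{4} = \frac{1}{2}\log\frac{q}{\pi} + \Re\frac{L'}{L}(1+\eps,\chi) + \frac{1}{2}\frac{\Gamma'}{\Gamma}\!\left(\frac{1+\eps+a_\chi}{2}\right),
\]
and to derive both estimates (i) and (ii) by quantitative comparison of its right-hand side against the stated targets. The standing tool will be the trivial Dirichlet-series bound $|\Re\frac{L'}{L}(1+\eps,\chi)| \leq -\frac{\zeta'}{\zeta}(1+\eps)$, which, combined with the elementary observation that $f(s) := (s-1)\zeta(s)$ is strictly increasing on $[1,\infty)$ (so $f'/f > 0$, forcing $-\frac{\zeta'}{\zeta}(s) < \tfrac{1}{s-1}$ for all real $s>1$), yields $|\Re\frac{L'}{L}(1+\eps,\chi)| < 1/\eps$.

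For (i), subtracting $\tfrac{1}{2}\log q$ from the backbone identity reduces the claim to the inequality $-\tfrac{1}{2}\log\pi + \Re\tfrac{L'}{L}(1+\eps,\chi) + \tfrac{1}{2}\tfrac{\Gamma'}{\Gamma}\bigl(\tfrac{1+\eps+a_\chi}{2}\bigr) < 1/\eps$. Here $-\tfrac{1}{2}\log\pi < 0$, and the gamma term is bounded above (by monotonicity of $\Gamma'/\Gamma$) by $\tfrac{1}{2}\tfrac{\Gamma'}{\Gamma}(1 + \eps/2)$, which remains negative on $0 < \eps < 0.85$ since $\Gamma'/\Gamma(1) = -\gamma$ and $\psi'$ is uniformly bounded there. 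Combined with $|\Re L'/L(1+\eps,\chi)| < 1/\eps$, this settles (i).

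For (ii), I would first exploit a cancellation at $\eps = 0$. Using the special values $\Gamma'/\Gamma(\tfrac{1}{2}) = -\gamma - 2\log 2$ and $\Gamma'/\Gamma(1) = -\gamma$, together with $\chi(-1) = 1 - 2a_\chi$, a direct case-by-case check in $a_\chi \in \{0,1\}$ gives
\[
-\tfrac{1}{2}\log\pi + \tfrac{1}{2}\tfrac{\Gamma'}{\Gamma}\bigl(\tfrac{1+a_\chi}{2}\bigr) + \tfrac{1}{2}\bigl(\gamma + \log 2\pi + \chi(-1)\log 2\bigr) = 0.
\]
Consequently, the quantity inside $|\cdot|$ in (ii) reduces via the backbone identity to $\Re\frac{L'}{L}(1+\eps,\chi) + G(\eps)$, where $G(\eps) := \tfrac{1}{2}\bigl[\tfrac{\Gamma'}{\Gamma}(\tfrac{1+\eps+a_\chi}{2}) - \tfrac{\Gamma'}{\Gamma}(\tfrac{1+a_\chi}{2})\bigr]$. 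The first summand is bounded by $1/\eps$ as before; for $G(\eps)$, since the trigamma $\psi' = (\Gamma'/\Gamma)'$ is positive and decreasing on $(0,\infty)$, the mean value theorem gives $|G(\eps)| \leq (\eps/4)\psi'(\tfrac{1+a_\chi}{2}) \leq \pi^2\eps/8$.

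The main obstacle is the numerical calibration for (ii): the crude combined bound $1/\eps + \pi^2\eps/8 < 1 + 1/\eps$ holds only for $\eps < 8/\pi^2 \approx 0.811$, falling slightly short of $0.85$. To bridge $[0.811,\,0.85)$ I would sharpen $|\Re\frac{L'}{L}(1+\eps,\chi)| \leq -\frac{\zeta'}{\zeta}(1+\eps) = 1/\eps - \gamma + O(\eps)$ using the explicit Laurent expansion (Stieltjes constants) and verify numerically that $-\frac{\zeta'}{\zeta}(1+\eps) + \pi^2\eps/8 < 1 + 1/\eps$ throughout. That numerical verification, plus justifying the strict monotonicity of $(s-1)\zeta(s)$, are the only nonroutine steps; all remaining arithmetic is mechanical.
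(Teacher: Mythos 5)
Your overall strategy — plug $\sigma=1+\eps$ into \eqref{reLL}, bound $\Re\frac{L'}{L}(1+\eps,\chi)$ by $-\frac{\zeta'}{\zeta}(1+\eps)<1/\eps$, and (for (ii)) exploit the exact cancellation of the constant terms at $\eps=0$ — is exactly the paper's. The case-by-case identity
$-\tfrac12\log\pi + \tfrac12\psi\bigl(\tfrac{1+a_\chi}{2}\bigr) + \tfrac12(\gamma+\log 2\pi+\chi(-1)\log 2) = 0$
is correct, and the reduction of (ii) to bounding $\Re\frac{L'}{L}(1+\eps,\chi) + G(\eps)$ is precisely the intermediate step. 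However, the way you bound the residual terms creates two avoidable problems.

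First, in (i), the justification that $\psi(1+\eps/2)<0$ throughout $0<\eps<0.85$ does not close: the crude estimate $\psi(1+\eps/2)\leq -\gamma + (\eps/2)\psi'(1)$ gives $-\gamma + 0.425\cdot\pi^2/6\approx 0.12>0$ at the top of the range, so boundedness of $\psi'$ is not enough. What one actually needs (and what the paper invokes directly) is that the unique positive zero of $\psi$ lies at $x_0\approx 1.4616>1.425$, so $\psi$ is negative on all of $(0,1.425]$ — and indeed this known fact is the source of the constant $0.85$ in the lemma's hypothesis. Second, in (ii), the mean-value/trigamma bound $|G(\eps)|\leq \pi^2\eps/8$ is inefficient and, as you note, leaves a gap on $[8/\pi^2,\,0.85)$; the patch via a refined Laurent estimate for $-\zeta'/\zeta$ is plausible numerically but is not carried out. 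The paper sidesteps both issues with a single observation: since $\psi$ is increasing and negative on the range in question, $0 < G(\eps) \leq -\tfrac12\psi\bigl(\tfrac{1+a_\chi}{2}\bigr) \leq \tfrac12(\gamma+2\log 2) = \tfrac12(\gamma+\log 4) \approx 0.982 < 1$, uniformly in $\eps<0.85$, so no trigamma, no MVT, and no numerical work on $\zeta'/\zeta$ is needed. I'd recommend replacing your trigamma step with this monotonicity-plus-negativity bound, which also makes the role of the threshold $0.85$ transparent.
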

 \begin{proof}
  For $0< \sigma < 1.425$, the function $\frac{\Gamma'}{\Gamma}(\sigma)$ is strictly increasing and negative. Thus, since $\Pi_{\eps}(\varrho) > 0$, we have from \eqref{reLL} that
  \[ 0 < \sum_{\varrho(\chi)} \frac{\Pi_{\eps}(\varrho)}{4} < \Re\bigg(\frac{L'}{L}(1+\eps, \chi)\bigg) + \frac{1}{2}\log q. \]
  Moreover, since $-\frac{\zeta'}{\zeta}(\sigma) < (\sigma - 1)^{-1}$ for $1<\sigma \leq 2$,\footnote{This amounts to $(\sigma-1)\zeta(\sigma)$ being strictly increasing for $\sigma\in (1, 2]$.} it follows that
  \[ \Re\bigg(\frac{L'}{L}(1+\eps, \chi)\bigg) \leq \bigg|\dfrac{\zeta'}{\zeta}(1+\eps)\bigg| < \dfrac{1}{\eps}, \]
  which proves part (i). For the second part, we deduce from \eqref{reLL} that
  \begin{align*}
   \Bigg|\sum_{\varrho(\chi)} \frac{\Pi_{\eps}(\varrho)}{4} - \frac{1}{2}\log q + \frac{1}{2}&\Big(\gamma + \log 2\pi + \chi(-1)\log 2 \Big) \Bigg| \\
   &< \bigg|\frac{\zeta'}{\zeta}(1+\eps) \bigg| + \frac{1}{2}\Big(\gamma + \log 4\Big) < \frac{1}{\eps} + 1,
  \end{align*}
  concluding the proof.
 \end{proof}
 
 Now, let $M\geq 2$ be a fixed real number, and take the following partition of the critical strip $\{s\in \C ~|~ 0 < \sigma < 1\}$:  
 \begin{equation*}
  \begin{tikzpicture}[scale=2.9, every node/.style={scale=2.9}]
   
   \fill[pattern=crosshatch, pattern color=black!70] (0,1.55*0.666)--(0,1*0.666)--(1,1*0.666)--(1,1.55*0.666);
   \fill[pattern=crosshatch, pattern color=black!70] (0,-1.55*0.666)--(0,-1*0.666)--(1,-1*0.666)--(1,-1.55*0.666);
   
   \fill[pattern=bricks, pattern color=black!50] (.225,1*0.666)--(.775,1*0.666)--(.775,-1*0.666)--(.225,-1*0.666);
   
   \fill[pattern=north east lines, pattern color=black!50] (0,1*0.666)--(0,.376*0.666)--(.225,.376*0.666)--(.225,1*0.666);
   \fill[pattern=north east lines, pattern color=black!50] (0,-1*0.666)--(0,-.376*0.666)--(.225,-.376*0.666)--(.225,-1*0.666);
   \fill[pattern=north east lines, pattern color=black!50] (.775,1*0.666)--(.775,.376*0.666)--(1,.376*0.666)--(1,1*0.666);
   \fill[pattern=north east lines, pattern color=black!50] (.775,-1*0.666)--(.775,-.376*0.666)--(1,-.376*0.666)--(1,-1*0.666);
   
   
   \draw[-, dashed] (0,1.55*0.666)--(0,1*0.666)--(1,1*0.666)--(1,1.55*0.666);
   \draw[-, dashed] (0,-1.55*0.666)--(0,-1*0.666)--(1,-1*0.666)--(1,-1.55*0.666);
   
   \draw[-, dashed] (0,.376*0.666)--(.225,.376*0.666);
   \draw[-, dashed] (0,-.376*0.666)--(.225,-.376*0.666);
   \draw[-, dashed] (.775,.376*0.666)--(1,.376*0.666);
   \draw[-, dashed] (.775,-.376*0.666)--(1,-.376*0.666);
   
   \draw[-, dashed] (0,1*0.666)--(1,1*0.666);
   \draw[-, dashed] (0,-1*0.666)--(1,-1*0.666);
   
   \draw[-, dashed] (.225,1*0.666)--(.225,-1*0.666);
   \draw[-, dashed] (.775,1*0.666)--(.775,-1*0.666);   
   
   \draw[-{Stealth}, black] (-0.15,0)--(1.175,0);
   \draw[-{Stealth}, black] (0,-1.5*0.666)--(0,1.65*0.666);
   \draw[-] (1,-1.5*0.666)--(1,1.5*0.666);
    
   \draw (0,0)--(0,-0.05);
   \draw (0,0) node[anchor=north east, scale=.3] {$0$};
   \draw (.5,0)--(.5,-0.05); 
   \draw (1,0)--(1,-0.05);
   \draw (1,0) node[anchor=north west, scale=.3] {$1$};
   \draw (0,1*0.666)--(-0.05,1*0.666);
   \draw (0,1*0.666) node[anchor=south east, scale=.3] {$1$};
   \draw (0,-1*0.666)--(-0.05,-1*0.666);
   \draw (0,-1*0.666) node[anchor=north east, scale=.3] {$-1$};
   
   \draw (1.75,1.2*0.666) circle (.15cm);
   \draw (1.75,1.2*0.666) node[scale=0.4] {\footnotesize{$\mathcal{R}_1$}};
   \draw[->, black] (1.6,1.25*0.666) to[out=150, in=40] (0.9,1.2*0.666);
   \draw[->, black] (1.6,1.15*0.666) to[out=-110.75, in=40] (0.9,-1.2*0.666);
   
   \draw (1.75,.45*0.666) circle (.15cm);
   \draw (1.75,.45*0.666) node[scale=0.4] {\footnotesize{$\mathcal{R}_2$}};
   \draw[->, black] (1.6,.45*0.666) to[out=180, in=10] (.54,.675*0.666);
   
   \draw (1.75,-.45*0.666) circle (.15cm);
   \draw (1.75,.-.45*0.666) node[scale=0.4] {\footnotesize{$\mathcal{R}_3$}};
   \draw[->, black] (1.6,-.4*0.666) to[out=120, in=30] (0.875,.75*0.666);
   \draw[->, black] (1.6,-.5*0.666) to[out=-130, in=-25] (0.875,-.85*0.666);

   \draw (1.75,-1.2*0.666) circle (.15cm);
   \draw (1.75,-1.2*0.666) node[scale=0.4] {\footnotesize{$\widetilde{\mathcal{B}}$}};
   \draw[->, black] (1.6,-1.15*0.666) to[out=110, in=20] (0.9,-.2*0.666);
   \draw[->, black] (1.6,-1.25*0.666) to[out=130, in=0] (0.1,-.2*0.666);
   
   \draw (1.9,1.2*0.666) node[scale=0.4, anchor=west] {\footnotesize{$\displaystyle := \{s\in \C ~|~ 0 < \sigma < 1,\ |t| \geq 1 \}$}};
   
   \draw (1.9,.45*0.666) node[scale=0.4, anchor=west] {\footnotesize{$\displaystyle := \Bigg\{s\in \C ~\bigg|~ \frac{1}{M} \leq \sigma \leq 1 - \frac{1}{M},\ |t| < 1 \Bigg\}$}};
   
   \draw (1.9,-.45*0.666) node[scale=0.4, anchor=west] {\footnotesize{$\displaystyle := \left\{ s\in \C ~\Bigg|~ \begin{array}{@{}c@{}} 0 \leq \sigma(1-\sigma) \leq \tfrac{1}{M}\big(1- \tfrac{1}{M}\big), \\[.5em]  M^{-1/2} \leq |t| < 1  \end{array}\right\}$}};
   
   \draw (1.9,.-1.2*0.666) node[scale=0.4, anchor=west] {\footnotesize{$\displaystyle := \{s\in \C ~|~ 0 < \sigma < 1 \} \setminus \left(\mathcal{R}_1 \cup \mathcal{R}_2 \cup \mathcal{R}_3 \right)$}};
  \end{tikzpicture}
 \end{equation*}
 Note that, in the notation of \eqref{ZFR}, $\widetilde{\mathcal{B}} = \mathcal{B}_{1/M} \cup (1- \mathcal{B}_{1/M})$. The goal of the next lemma is to bound $\Pi_{0}(s)$ in terms of $\Pi_{\eps}(s)$ for $s$ sufficiently far away from $0$ and $1$, which in our case means ``$s$ outside of $\widetilde{\mathcal{B}}$''.
 
 \begin{lem}[Pairing-up lemma]\label{cuteineq}
  The following hold:
  \begin{enumerate}[label=(\roman*)]
   \item For $0< \Re(s) < 1$ and $\varphi := \dfrac{1+\sqrt{5}}{2}$, we have {$\displaystyle \Pi_{0}(s) > \frac{\Pi_{\varphi-1}(s)}{2\varphi-1}$}; \smallskip
   
   \item For $s\in \mathcal{R}_1\cup \mathcal{R}_2\cup \mathcal{R}_3$ and $0 < \eps < 1$, we have
   \[ |\Pi_{0}(s) - \Pi_{\eps}(s)| < 5M\eps \,\Pi_{\eps}(s).\]
  \end{enumerate}
 \end{lem}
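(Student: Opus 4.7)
The plan is to recast both parts in the algebraic variables
\[
a := X + t^2, \qquad b := t^2(1-4X), \qquad c := \eps(1+\eps), \qquad X := \sigma(1-\sigma) \in (0, 1/4].
\]
Pairing $s \leftrightarrow \bar{s}$ and $1-s \leftrightarrow 1-\bar{s}$, and noting that $\Im(s(1-s)) = t(1-2\sigma)$ (whose square equals $b$), yields the closed forms
\[
\Pi_0(s) = \frac{2a}{a^2+b}, \qquad \Pi_\eps(s) = \frac{2(1+2\eps)(a+c)}{(a+c)^2+b}.
\]

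For part (i), the choice $\eps = \varphi - 1 = 1/\varphi$ invokes the golden-ratio identities $1+2\eps = 2\varphi - 1 = \sqrt{5}$ and $c = \varphi(\varphi-1) = \varphi^2 - \varphi = 1$, so $\Pi_{\varphi-1}(s)/\sqrt{5}$ is structurally identical to $\Pi_0$ but with $a$ replaced by $a+1$. Clearing denominators gives
\[
\Pi_0(s) - \frac{1}{\sqrt{5}}\,\Pi_{\varphi-1}(s) = \frac{2\,[a(a+1) - b]}{(a^2+b)\big((a+1)^2+b\big)},
\]
so (i) reduces to $a(a+1) > b$. Direct expansion yields $a(a+1) - b = X^2 + 6Xt^2 + t^4 + X$, which is strictly positive whenever $X > 0$, i.e.\ whenever $0 < \sigma < 1$.

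For part (ii), introduce $r := c/a$ and $\mu := b/a^2$. The same formulas give
\[
\frac{\Pi_\eps(s)}{\Pi_0(s)} = (1+2\eps)(1+r)\,\frac{1+\mu}{(1+r)^2 + \mu}.
\]
A derivative check shows $\mu \mapsto (1+\mu)/((1+r)^2+\mu)$ is non-decreasing on $[0,\infty)$, so the ratio lies between $(1+2\eps)/(1+r)$ and $(1+2\eps)(1+r)$; equivalently, $\Pi_0/\Pi_\eps \in \big[1/((1+2\eps)(1+r)),\,(1+r)/(1+2\eps)\big]$, whence
\[
\left|\frac{\Pi_0}{\Pi_\eps} - 1\right| \leq \max\!\left(\frac{|r - 2\eps|}{1+2\eps},\ \frac{r + 2\eps}{1+r}\right).
\]
A region-by-region inspection gives $a \geq 1/(2M)$ in all three regions ($a \geq 1$ in $\mathcal{R}_1$; $a \geq (M-1)/M^2 \geq 1/(2M)$ in $\mathcal{R}_2$ for $M \geq 2$; $a \geq t^2 \geq 1/M$ in $\mathcal{R}_3$), so $r \leq 2Mc = 2M\eps(1+\eps) \leq 4M\eps$. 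The first term is then at most $2M\eps < 5M\eps$, while the required bound $(r+2\eps)/(1+r) \leq 5M\eps$ rearranges to $2M(1+\eps)(1-5M\eps) \leq 5M - 2$, which is vacuous once $5M\eps \geq 1$ and is verified directly for $M \geq 2$, $0 < \eps < 1$ otherwise.

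The main obstacle is keeping the constant sharp enough to reach $5M$: cruder approaches—for instance, bounding $|\Pi_\eps - \Pi_0|$ via a mean-value estimate on $u \mapsto u/(u^2+b)$—lead to constants around $13M$. It is the two-variable envelope $\Pi_\eps/\Pi_0 \in [(1+2\eps)/(1+r),\,(1+2\eps)(1+r)]$, extracted from monotonicity in $\mu$, that delivers the stated factor $5M$.
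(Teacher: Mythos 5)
Your proof is correct, and the underlying algebra is equivalent to the paper's — your $a$, $b$, $c$ are nothing but $\Re(s(1-s))$, $\Im(s(1-s))^2$, and $\widetilde{\sigma}_\eps - \widetilde{\sigma}$ in the paper's notation (so, for instance, the identity $\frac{2(1+2\eps)(a+c)}{(a+c)^2+b}$ is precisely the paper's \eqref{expleq0}) — but the way you extract the inequality in part (ii) is genuinely different from what the paper does, and I think it is cleaner. The paper's proof of (ii) works summand by summand: it compares $\frac{\sigma}{\sigma^2+t^2}$ against $\frac{\sigma+\eps}{(\sigma+\eps)^2+t^2}$ via \eqref{delub}, once for each of the two halves of $\Pi$, and runs this separately in $\mathcal{R}_1$ (Step~1) and $\mathcal{R}_2\cup\mathcal{R}_3$ (Step~2), yielding two pairs of one-sided bounds that are then merged in Step~3; the merge is where the artificial $5$ appears. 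Your approach instead treats the full ratio $\Pi_\eps/\Pi_0 = (1+2\eps)(1+r)\frac{1+\mu}{(1+r)^2+\mu}$ as a single object and observes that the $\mu$-dependent factor is monotone on $[0,\infty)$, giving the two-sided envelope in one stroke; the region hypotheses then enter only through the single scalar lower bound $a\geq 1/(2M)$, which controls $r = c/a$. This gives up a bit of sharpness in $\mathcal{R}_1$ (where the paper gets an $M$-free bound $O(\eps)$, while you get $O(M\eps)$), but since the final constant is $5M$ anyway, nothing is lost. In part (i) you also improve slightly on the paper's presentation: the paper derives the inequality \eqref{pi0piE} by dropping a positive term from the denominator, whereas you compute $\Pi_0 - \Pi_{\varphi-1}/\sqrt{5}$ exactly as a single fraction with manifestly positive numerator $X^2+6Xt^2+t^4+X$. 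One small quibble: the stated lemma requires a strict inequality $|\Pi_0-\Pi_\eps|<5M\eps\,\Pi_\eps$, and although you write several of your intermediate bounds as ``$\leq$'', the chain does in fact close strictly — e.g.\ $2M(1+\eps)(1-5M\eps)$ is strictly decreasing in $\eps$, so $<2M\leq 5M-2$ for $\eps>0$ and $M\geq 2$ — so this is only a presentational point, but worth flagging since the paper is explicit about the strict inequality.
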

 
 \begin{rem}
  This lemma was initially inspired by an argument attributed to U. Vorhauer used to estimate $-\sum_{\varrho(\chi)} \Re(1/\varrho)$. Although we were unable to find the original source, the argument is outlined in Exercise 8, Section 10.2 of Montgomery--Vaughan \cite{montvaug06}.
 \end{rem}
 
 \begin{proof}[Proof of Lemma \ref{cuteineq}] 
  Let $s=\sigma + it$ and $\eps> 0$. Writing $\widetilde{\sigma} := \sigma(1-\sigma)$ and $\widetilde{\sigma}_\eps := (\sigma + \eps)(1-\sigma + \eps) = \widetilde{\sigma} + \eps(1 + \eps)$, we have
  \begin{align}
   \frac{\Pi_{\eps}(s)}{2} &= \frac{\sigma + \eps}{(\sigma + \eps)^2 + t^2} + \frac{1-\sigma +\eps}{(1-\sigma +\eps)^2 + t^2} \nonumber \\
   &= \bigg(\frac{\widetilde{\sigma}_{\eps} + (1 + \widetilde{\sigma}_{\eps}) t^2 + t^4}{\widetilde{\sigma}_{\eps}^2 + ((1+2\eps)^2 - 2\widetilde{\sigma}_{\eps})t^2 + t^4}\bigg) \frac{(1+2\eps)}{1+t^2} \label{expleq0} \\
   &\hspace{-2em}= \bigg(1 + \frac{\widetilde{\sigma}(1-\widetilde{\sigma}) + 3\widetilde{\sigma} t^2   +   \eps(1+\eps) \big( 1-2\widetilde{\sigma} - \eps(1+\eps) - t^2\big)}{\widetilde{\sigma}^2 + (1 - 2\widetilde{\sigma})t^2 + t^4   +   \eps(1+\eps)\big(2\widetilde{\sigma} + \eps(1+\eps) + 2t^2\big) }\bigg) \frac{(1+2\eps)}{1+t^2}. \nonumber
  \end{align}
  Since $\eps(1+\eps)(2\widetilde{\sigma} + \eps(1+\eps) + 2t^2) > 0$, we get
  \begin{equation}
   \begin{aligned}
    &\text{\small $\displaystyle \Pi_0(s) - \frac{\Pi_{\eps}(s)}{1+2\eps} \geq$} \\
    &\hspace{2.5em}\text{\small $2\bigg(\frac{- 1 + 2\widetilde{\sigma} + \eps(1+\eps) + t^2}{\widetilde{\sigma}^2 + (1-2\widetilde{\sigma})t^2 + t^4 + \eps(1+\eps)\big(2\widetilde{\sigma} + \eps(1+\eps) + 2t^2\big)} \bigg) \frac{\eps(1+\eps)}{1+t^2}$}.\hspace{-1em}
   \end{aligned}\label{pi0piE}
  \end{equation}
  Because $\eps(1+\eps) = 1$ for $\eps = \varphi-1$, part (i) follows immediately from \eqref{pi0piE}.
  
  For part (ii), we divide the proof into three steps. Our starting point is the fact that, if $0 < \sigma < 1$, then, for $\eta > -1$,
  \begin{equation}
   \frac{\sigma}{\sigma^2 + t^2} \leq (1+\eta) \frac{\sigma + \eps}{(\sigma+\eps)^2 + t^2} \ \iff\  \eta \geq \bigg(\frac{\sigma - t^2/(\sigma + \eps)}{\sigma^2 + t^2} \bigg) \eps. \label{delub}
  \end{equation}
  
  \medskip
  \noindent
  $\bullet~\text{\underline{Step 1}:}$ {\small $\displaystyle \frac{\Pi_{\eps}(s)}{1+2\eps} < \Pi_{0}(s) < \bigg(1 + \frac{\eps^2}{1+\eps}\bigg)\Pi_{\eps}(s)$} for $s\in \mathcal{R}_1$. \smallskip
  
  For the lower bound, it suffices to note that, for every $s\in\mathcal{R}_1$, we have
  \[ -1 + 2\widetilde{\sigma} + \eps(1+\eps) + t^2 > 0, \]
  and thus, from \eqref{pi0piE}, we obtain $\Pi_0(s) - \Pi_{\eps}(s)/(1+2\eps) > 0$. For the upper bound, we use \eqref{delub}. For $s\in \mathcal{R}_1$, we have
  \[ \bigg(\frac{\sigma - t^2/(\sigma + \eps)}{\sigma^2 + t^2} \bigg)\eps < \bigg(\frac{1}{t^2} - \frac{1}{1 + \eps} \bigg)\eps \leq \bigg(1 - \frac{1}{1 + \eps} \bigg)\eps = \frac{\eps^2}{1+\eps}, \]
  and thus, taking $\eta := \eps^2/(1+\eps)$ in \eqref{delub} makes the inequality $\Pi_0(s) < (1+\eta)\Pi_{\eps}(s)$ valid for every $s \in \mathcal{R}_1$.
  
  \medskip
  \noindent
  $\bullet~\text{\underline{Step 2}:}$ {\small $\displaystyle \frac{\Pi_{\eps}(s)}{(1+2\eps)(1 + 2M \eps(1+\eps))} < \Pi_{0}(s) \leq (1 + M \eps)\, \Pi_{\eps}(s)$} for $s\in \mathcal{R}_2\cup \mathcal{R}_3$. \smallskip
  
  We start with the lower bound. The denominator of \eqref{expleq0} is always positive, and so is the numerator, for every $\eps > 0$. Thus,
  \begin{gather}
   \hspace{-12em}\Pi_0(s) - \frac{\Pi_{\eps}(s)}{(1+2\eps)(1+g(\eps))} \geq \label{comp3} \\
   \hspace{.5em}2\,\Bigg(\frac{\big(\widetilde{\sigma} - \frac{\widetilde{\sigma}_{\eps}}{1+g(\eps)}\big) + \big(\big(1-\frac{1}{1+g(\eps)}\big) + \big(\widetilde{\sigma} - \frac{\widetilde{\sigma}_{\eps}}{1+g(\eps)}\big)\big)t^2 + \big(1-\frac{1}{1+g(\eps)}\big)t^4}{\widetilde{\sigma}^2 + (1-2\widetilde{\sigma})t^2 + t^4}\Bigg)\frac{1}{1+t^2}, \nonumber
   \end{gather}
  where $g(\eps) > 0$ is some function of $\eps > 0$. Therefore, since
  \begin{align*}
   \bigg(\widetilde{\sigma} - \frac{\widetilde{\sigma}_{\eps}}{1+g(\eps)}\bigg) + &\bigg(\bigg(1-\frac{1}{1+g(\eps)}\bigg) + \bigg(\widetilde{\sigma} - \frac{\widetilde{\sigma}_{\eps}}{1+g(\eps)}\bigg)\bigg)t^2 \\
   &= \frac{g(\eps)}{1+g(\eps)} (\widetilde{\sigma} + \widetilde{\sigma}t^2 + t^2) - \frac{\eps(1+\eps)}{1+g(\eps)}(1+t^2),
  \end{align*}
  in order for \eqref{comp3} to be strictly positive it suffices to have
  \begin{equation}
   g(\eps) \geq \eps(1+\eps)\, \frac{1+t^2}{(1 + t^2)\widetilde{\sigma} + t^2} = \eps(1+\eps)\,\bigg( \widetilde{\sigma} + \frac{t^2}{1+t^2}\bigg)^{-1}. \label{gdef}
  \end{equation}
  For $s\in\mathcal{R}_2$, we have $\widetilde{\sigma} = \sigma(1-\sigma) \geq M^{-1}(1 - M^{-1}) \geq \frac{1}{2} M^{-1}$ (since $M \geq 2$), while for $s\in\mathcal{R}_3$ we have $t^2/(1+t^2) \geq \frac{1}{2}M^{-1}$. Thus, in both cases, it suffices to take $g(\eps) := 2M \eps(1+\eps)$ in order for $g$ to satisfy \eqref{gdef}, giving us the desired lower bound.  
  
  For the upper bound, we have that
  \[ \bigg(\frac{\sigma - t^2/(\sigma + \eps)}{\sigma^2 + t^2} \bigg)\eps \leq \bigg(\frac{\sigma}{\sigma^2 + t^2} \bigg)\eps \leq
   \begin{cases}
    \eps/\sigma \leq M\eps,\negphantom{\eps/\sigma \leq M\eps,}\phantom{\sigma \eps/t^2 \leq M\eps,} \ \text{ if } s\in\mathcal{R}_2, \\[.2em]
   \sigma \eps/t^2 \leq M\eps, \ \text{ if } s\in\mathcal{R}_{3};
   \end{cases} \]
  implying that taking $\eta := M\eps$ in \eqref{delub} makes $\Pi_0(s) \leq (1+\eta)\Pi_{\eps}(s)$ valid for every $s \in \mathcal{R}_2\cup\mathcal{R}_3$, concluding step 2.

  \medskip
  \noindent
  $\bullet~\text{\underline{Step 3}:}$ $|\Pi_{0}(s) - \Pi_{\eps}(s)| < 5M\eps \,\Pi_{\eps}(s)$ for $s\in \mathcal{R}_1 \cup \mathcal{R}_2 \cup \mathcal{R}_3$, $0<\eps < 1$. \smallskip
  
  Since $M\geq 2$ and $\eps < 1$, we have from step 1 that, for $s\in \mathcal{R}_1$,
  \begin{align*}
   |\Pi_0(s) - \Pi_{\eps}(s)| &< \max\bigg\{\frac{2\eps}{1+2\eps},\, \frac{\eps^2}{1+\eps}\bigg\} \,\Pi_{\eps}(s) \\
   &< \max\{2\eps, \, \eps^2 \} \,\Pi_{\eps}(s) \\
   &\leq 2\eps\, \Pi_{\eps}(s)\quad (< 5M\eps\,\Pi_{\eps}(s)),
  \end{align*}
  and from step 2 we have, for $s\in\mathcal{R}_2\cup \mathcal{R}_3$, that
  \begin{align*}
   |\Pi_0(s) - \Pi_{\eps}(s)| &\leq \max\bigg\{\frac{2\eps}{1+2\eps}\bigg(1 + \frac{M(1+\eps)}{1+2M\eps(1+\eps)}\bigg),\, M \eps \bigg\}\,\Pi_{\eps}(s) \\
   &\leq \max\bigg\{ 2M\eps\,\bigg(\frac{1}{M} + 1 + \eps\bigg),\, M\eps \bigg\}\,\Pi_{\eps}(s) \\
   &< 5M\eps \,\Pi_{\eps}(s).
  \end{align*}
  thus proving part (ii).
 \end{proof}
 
 We are now ready to prove the main propositions of this section.

\subsection{Proofs of Propositions \ref{AnLem} and \ref{AnLem2}}
 \begin{proof}[Proof of Proposition \ref{AnLem}]
  Write $\widetilde{\mathcal{S}}$ for the multiset consisting of $\varrho$, $1-\varrho$ for each $\varrho\in\mathcal{S}$, so that $|\widetilde{\mathcal{S}}| = 2|\mathcal{S}|$, and write $G = \frac{1}{2}(\gamma + \log 2\pi + \chi(-1)\log 2)$. From the definition of the pairing-up function \eqref{prupf}, we have
  \[ \frac{1}{2}\,\Re\bigg(\frac{1}{\varrho}\bigg),\ \frac{1}{2}\,\Re\bigg(\frac{1}{1-\varrho}\bigg) \leq \frac{\Pi_0(\varrho)}{4} \]
  and $\Pi_{\eps}(\varrho)/4 \leq \eps^{-1}$, for $0< \Re(\varrho) < 1$ and $\eps>0$. Thus, from \eqref{pi01} and Lemmas \ref{smllp} (ii), \ref{cuteineq} (i) we obtain:
  \begin{align*}
   &\Re\bigg(\frac{L'}{L}(1,\chi)\bigg) = \sum_{\varrho\in\widetilde{\mathcal{S}}} \frac{\Pi_{0}(\varrho)}{4} + \Bigg(\sum_{\varrho(\chi)} \frac{\Pi_{0}(\varrho)}{4} - \frac{1}{2}\log q + G - \sum_{\varrho\in\widetilde{\mathcal{S}}} \frac{\Pi_{0}(\varrho)}{4} \Bigg) \\
   &> \Re\Bigg(\sum_{\varrho \in \mathcal{S}} \frac{1}{1-\varrho}\Bigg) + \frac{1}{2\varphi-1}\Bigg(\sum_{\varrho(\chi)} \frac{\Pi_{\varphi-1}(\varrho)}{4} - \frac{1}{2}\log q + G - \sum_{\varrho\in\widetilde{\mathcal{S}}} \frac{\Pi_{\varphi-1}(\varrho)}{4}\Bigg) \\
    &\hspace{17.5em} +\, \bigg(1-\frac{1}{2\varphi-1}\bigg)\bigg(- \frac{1}{2}\log q + G\bigg) \\
   &> \Re\Bigg(\sum_{\varrho \in \mathcal{S}} \frac{1}{1-\varrho}\Bigg) -\frac{1}{2\varphi-1}\bigg(1+ \frac{1}{\varphi-1} + \frac{2|\mathcal{S}|}{\varphi-1}\bigg) - \bigg(1-\frac{1}{2\varphi -1}\bigg)\, \frac{1}{2}\log q \\
   &= \Re\Bigg(\sum_{\varrho \in \mathcal{S}} \frac{1}{1-\varrho}\Bigg) -\bigg(1-\frac{1}{\sqrt{5}}\bigg)\frac{1}{2}\log q - \frac{(\sqrt{5}+1)(2|\mathcal{S}|+3)}{2\sqrt{5}} + 1.
  \end{align*}
  The proposition then follows by rearranging the terms.
 \end{proof}
 
 \begin{proof}[Proof of Proposition \ref{AnLem2}]
  Writing $\mathcal{B} = \mathcal{B}_M$, we have from \eqref{pi01} that  
  \begin{align*}
   &\Re\Bigg(\frac{L'}{L}(1,\chi) - \sum_{\varrho(\chi) \cap \mathcal{B}} \frac{1}{1-\varrho}\Bigg) = \underbrace{\sum_{\varrho(\chi)\setminus \widetilde{\mathcal{B}}}\frac{\Pi_0(\varrho) - \Pi_{\eps}(\varrho)}{4}}_{=:\, S_1}\\
   &\hspace{11em}- \underbrace{\Bigg(\sum_{\varrho(\chi)\cap \widetilde{\mathcal{B}}} \frac{\Pi_{\eps}(\varrho)}{4} - \sum_{\varrho(\chi) \cap \mathcal{B}} \Re\bigg(\frac{1}{\varrho}\bigg)\Bigg)}_{=:\, S_2} \\
   &\hspace{7em}+ \underbrace{\Bigg(\sum_{\varrho(\chi)} \frac{\Pi_{\eps}(\varrho)}{4} - \frac{1}{2}\log q + \frac{1}{2}\Big(\gamma + \log 2\pi + \chi(-1)\log 2 \Big)\Bigg)}_{=:\, S_3}.
  \end{align*}
  From Lemma \ref{cuteineq} (ii), we have $|S_1| < 5M\eps\, \sum_{\varrho(\chi)} \Pi_{\eps}(\varrho)/4$, and thus, from Lemma \ref{smllp} (i), it follows that $|S_1| < \frac{5}{2}M\eps \log q + 5M$. Next, since $\Pi_{\eps}(\varrho)/4 < \eps^{-1}$ for $0< \Re(\varrho) < 1$, and $\Re(1/\varrho) \leq M^{-1}$ for $\varrho \in \mathcal{B}$, we have $|S_2| < 2\,|\varrho(\chi)\cap \mathcal{B}|/\eps$. Finally, from Lemma \ref{smllp} (ii), we have $|S_3| \leq 1 + \eps^{-1}$. Then, putting everything together and substituting $\eps = 1/\sqrt{M \log q}$ yields
  \begin{equation*}
   \text{\small $\displaystyle \Bigg|\Re\Bigg(\frac{L'}{L}(1,\chi) - \sum_{\varrho(\chi) \cap \mathcal{B}} \frac{1}{1-\varrho}\Bigg)\Bigg| < \Bigg(\frac{7}{2} + 2|\varrho(\chi) \cap \mathcal{B}| \Bigg)\sqrt{M \log q} + (5M + 1)$}.
  \end{equation*}
  Note that $5M+1 \leq 11M/2$. By taking $M := f(q)$ ($\leq 4\log q$), we have $f(q) \leq 2\sqrt{f(q) \log q}$, and the estimate from Proposition \ref{AnLem2} follows.
 \end{proof}
 
 \subsection{Chang's zero-free regions: Theorem \ref{thm01B}}\label{ssecCZFR} 
 For $q\geq 2$, write $q' := \prod_{p\mid q} p$, $K_q := \log q/\log q'$, and $\mathcal{P}(q)$ for the largest prime divisor of $q$. Then, there is an effectively computable constant $c>0$ such that, for every $T \geq 1$, the region
 \begin{equation*}
  \left\{s = \sigma + it ~\bigg|~ \sigma \geq 1 - c\, \min\left\{\frac{1}{\log \mathcal{P}(q)},\, \frac{\log\log q'}{(\log q') (\log 2K_q)}, \, \frac{1}{(\log qT)^{9/10}} \right\} \right\}
 \end{equation*}
 is zero-free for $|t| < T$, with the possible exception of a simple, real zero in this region in case $\chi$ is real (Theorem 10 of Chang \cite{cha14}). In particular, it follows that we can take 
 \[ f(q) := \frac{1}{c}\,\max\left\{ \log \mathcal{P}(q),\, \frac{(\log q')(\log 2K_q)}{\log\log q'},\, (\log q)^{9/10} \right\} \]
 in \eqref{ZFR}, so that the only possible element in
 \[ \varrho(\chi)\cap \bigg\{s\in\C ~\bigg|~ \sigma \geq 1-\frac{1}{f(q)},\ |t|\leq 1\bigg\} \qquad \big(\supseteq \varrho(\chi)\cap \mathcal{B}_f\big)\]
 is the potential Siegel zero, and hence the only potential term in the summation over zeros in Proposition \ref{AnLem2}. Writing $\mathcal{L}(q) := \log\log q/\log\log q'$, it becomes clear that
 \[ \frac{(\log q')(\log 2K_q)}{\log\log q'} = \left(\frac{(\log 2)\mathcal{L}(q)}{\log\log q} + \mathcal{L}(q) - 1 \right) (\log q)^{1/\mathcal{L}(q)} = o(\log q), \]
 and thus, for sufficiently smooth moduli, Chang's zero-free region is much wider than the classical one (which yields $O(\log q)$). More precisely, if $\delta>0$ and $q$ is $q^{\delta}$-smooth, then there is $N_{\delta} \in\R_{>0}$ such that $f(q)\leq c^{-1}\,\delta\log q$ for all $q>N_{\delta}$. This combined with Proposition \ref{AnLem2} proves Theorem \ref{thm01B}.{\hfill$\square$}
 
 \subsection{Proof of Corollary \ref{MC1}}\label{ssecMC1}
 Using that $\Re(\frac{1}{1-s}) = ((1-\sigma) + \frac{t^2}{1-\sigma})^{-1}$, we can apply Proposition \ref{AnLem} to deduce that a number $s=\sigma+it$ in the critical strip cannot be a zero of $L(s,\chi_D)$ if 
 \[ 1 > \bigg((1-\sigma) + \frac{t^2}{1-\sigma} \bigg)\Bigg(\underbrace{\bigg(1- \frac{1}{\sqrt{5}}\bigg) \frac{1}{2} \log|D| + \frac{L'}{L}(1,\chi_D) + O(1)}_{=:\, X}\Bigg). \]
 Multiplying by $1-\sigma$ and rearranging we get the equation
  \[ (1-\sigma)^2 X - (1-\sigma) + t^2 X < 0, \]
 which, setting $t=0$ and solving in $1-\sigma$ yields $0 < 1-\sigma < X^{-1}$. As $\big((1-\frac{1}{\sqrt{5}})\frac{1}{2}\big)^{-1} = \sqrt{5}\varphi$ for $\varphi = \frac{1+\sqrt{5}}{2}$, it follows from Theorem \ref{thm03} that, under weak uniform $abc$, we have $X^{-1} > (\sqrt{5}\varphi + o(1))\,(\log|D|)^{-1}$, thus proving the first assertion.
 
 For the second assertion, we have from Theorem \ref{thm01B} that, for any fixed $\delta >0$, there is $N_{\delta}\in\R_{>0}$ such that, if $|D| > N_{\delta}$ is $|D|^{\delta}$-smooth, then any real zero $0<\beta<1$ of $L(s,\chi_D)$ must satisfy
 \[ 0 < \frac{1}{1-\beta} < \frac{L'}{L}(1,\chi_D) + M\delta^{\frac{1}{2}}\log|D|, \]
 where $M>0$ is some absolute constant. It follows from Theorem \ref{thm03} that, under weak uniform $abc$, we must have $\beta < 1- \frac{m\delta^{-1/2} + o(1)}{\log|D|}$, where $m = M^{-1}$. Changing $o(1)$ to $o_{\delta}(1)$ allows us to drop the condition $|D| > N_{\delta}$, and since $m\delta^{-\frac{1}{2}} = o_{\delta\to 0}(\delta^{-1})$, Theorem \ref{thm01B} allows us to extend vertically the zero-free region obtained to the height $1$ box $\{s ~|~ |t|\leq 1\}$, concluding the proof. {\hfill$\square$}

\section{The bridge from \texorpdfstring{$\frac{L'}{L}(1,\chi_D)$}{L'/L(1,chi\_D)} to \texorpdfstring{$\hei(j(\tau_D))$}{ht(j(tau\_D))}}\label{sec3}
 We will now prove Theorem \ref{thm02}, which is how we connect Siegel zeros with $abc$. The proof essentially gets reduced down to a calculation once three concepts are introduced: the framework of Euler--Kronecker constants due to Ihara \cite{ihar06}, Kronecker's limit formula (KLF), and the uniform distribution of Heegner points due to Duke \cite{duk88}. After briefly describing each of these, we compute the constant $C$ from Theorem \ref{thm02} with Lemmas \ref{explCONST1}--\ref{explCONST3} and finish the proof in subsection \ref{ssec45}.
 
\subsection{Euler--Kronecker constants}
 Let $K/\Q$ be a number field, $\mathcal{O}_K$ its ring of integers, and $\zeta_K(s) = \sum_{\mathfrak{a}\subseteq\O_K} [\O_K:\mathfrak{a}]^{-s}$ its Dedekind zeta function. In 2006, Ihara \cite{ihar06} introduced and studied the \emph{Euler--Kronecker constants} $\gamma_K \in\R$, which are defined as the constant term in the Laurent expansion of $\zeta'_K/\zeta_K$ at $s=1$:
 \begin{equation*}
  \frac{\zeta'_{K}}{\zeta_K}(s) = -\frac{1}{s-1} + \gamma_K + O(s-1) \qquad (s\to 1).
 \end{equation*}
 Write $\Cl_K$ for the ideal class group of $K$, and $h_K$ ($= |\Cl_K|$) for its class number. For each $\A\in\Cl_K$, the \emph{partial zeta function} $\zeta_K(s,\A)$ is given by 
 \[ \zeta_K(s,\A) := \sum_{\substack{\mathfrak{a}\in \A \\ \mathfrak{a} \text{ integral}}} \frac{1}{[\mathcal{O}_K : \mathfrak{a}]^s}, \]
 and so we have $\zeta_K(s) = \sum_{\A \in \Cl_K} \zeta_K(s,\A)$. Roughly following Ihara's naming scheme, we define the \emph{Kronecker limits} $\kron(\A)$ as the constant term in the Laurent expansion of $\zeta'_K/\zeta_K(s,\A)$ at $s=1$. The expansion of $\zeta_K(s,\A)$ at $s=1$ is given by
 \begin{equation*}
  \zeta_K(s,\A) = \frac{\varkappa_K}{s-1} + \varkappa_K \kron(\A) + O(s-1) \qquad (s\to 1),
 \end{equation*}
 where $\varkappa_K$, which is independent of $\A$, is determined by the analytic class number formula. Hence, the Euler--Kronecker constant of $K$ is the average of its Kronecker limits:
 \begin{equation}
  \gamma_K = \frac{1}{h_K} \sum_{\A\in\Cl_K} \kron(\A). \label{eukr}
 \end{equation}
 
 Using the classical limit formulas of Kronecker (for imaginary quadratic fields) and Hecke (for real quadratic fields -- see Zagier \cite{zag75}), together with the equidistribution results in Theorem 1 of Duke \cite{duk88}, one can obtain estimates for $\gamma_{K}$ of quadratic fields in terms of cycle integrals with good error terms. We will do this in detail for the imaginary quadratic case.

\subsection{Kronecker's limit formula}
 The classical \emph{real analytic Eisenstein series} is given by
 \[ E(\tau, s) = \sum_{\substack{(m,n)\in\Z^2 \\ (m,n)\neq 0}} \frac{\Im (\tau)^s}{|m\tau + n|^{2s}}, \]
 for $\tau\in \uhp$ and $\Re(s)>1$.\footnote{Some authors consider slightly modified versions of this function, such as divided by $2$, or with the pair $(m,n)$ running through relatively prime integers (equivalent to considering $\zeta(2s)^{-1}\, E(\tau,s)$), or multiplied by $\pi^{-s} \Gamma(s)$. As we are following Siegel \cite{siegel80}, we stick to his convention.} This is the simplest example of a ``non-holomorphic modular function'', meaning it is invariant under modular transformations $\tau \mapsto \tau' = (\alpha \tau+\beta)/(\gamma \tau+\delta)$ for $\begin{psmallmatrix} \alpha & \beta \\ \gamma & \delta \end{psmallmatrix} \in \SL_{2}(\Z)$.

 In the notation of \eqref{corresp}, we have the identity
 \begin{equation}
  \zeta_{\Q(\sqrt{D})}(s,\A) = \frac{1}{w_D} \bigg(\frac{2}{\sqrt{|D|}}\bigg)^s E(\tau_{\A}, s). \label{eqPZE}
 \end{equation}
 where $w_D$ is the number of roots of unity in $\Q(\sqrt{D})$. The residue and constant term in the Laurent expansion of $E(\tau, s)$ at $s=1$ are given by
 \begin{equation}
  \begin{aligned}
   E(\tau,s) = \frac{\pi}{s-1} + \pi\,\Big(\frac{\pi}{3}\Im(\tau) \ -\, &\log\Im(\tau) \,+\, \mathcal{U}(\tau)\Big) \\
   &+\, 2\pi\big(\gamma - \log 2\big) + O(s-1), \label{LrntE}
  \end{aligned}
 \end{equation}
 where
 \begin{equation}
  \text{\small$\displaystyle \mathcal{U}(\tau) := 4\sum_{n\geq 1} \Bigg(\sum_{d\,\mid\,n} \frac{1}{d}\Bigg) \frac{\cos(2\pi n\, \Re(\tau))}{e^{2\pi n\, \Im(\tau)}} \quad \bigg(= -2\log(|\eta(\tau)|^{2}) - \frac{\pi}{3} \Im(\tau) \bigg)$}; \label{funcU}
 \end{equation}
 this expansion is also sometimes called ``Kronecker's limit formula''.\footnote{Cf. Chapter 20, \S4 of Lang \cite{lang87}.} Here, $\eta$ is \emph{Dedekind's $\eta$-function}, defined as
 \[ \eta(z) := q^{1/24} \prod_{k\geq 1} (1-q^k) \]
 for $z = x+iy \in \uhp$, where $q := e^{2\pi i z}$ and ``$q^{1/24}$'' $= e^{\pi i z/12}$. One checks that
 \begin{align}
  \mathcal{U}(z) &= 4\, \sum_{n\geq 1}\Bigg(\sum_{d\,\mid\,n} \frac{1}{d}\Bigg) \frac{\cos(2\pi n x)}{e^{2\pi n y}} = 4\, \Re\Bigg( \sum_{n\geq 1} \Bigg(\sum_{d\,\mid\,n} \frac{1}{d}\Bigg) q^n \Bigg) \nonumber \\
  &= 4\, \Re\bigg( \sum_{d\geq 1} \sum_{k\geq 1} \frac{q^{dk}}{d} \bigg) = - 4\, \Re\bigg( \sum_{k\geq 1} \log(1 - q^k) \bigg) \label{eq39sim} \\
  &= - 4\, \sum_{k\geq 1} \log|1 - q^k| = -2\log(|\eta(z)|^2) - \frac{\pi}{3} y, \nonumber
 \end{align}
 so the formula in \eqref{LrntE} is indeed equivalent to the usual formulation.
 
 \begin{xrem}
  The function $\mathcal{U}$ in \eqref{funcU} is based on Stopple's notation in p. 867 of \cite{sto06}. Although \eqref{LrntE} is usually given in terms of $\eta$, this form of the statement emphasizes the dominant part ``$\frac{\pi^2}{3}\Im(\tau_{\A})$'' in the constant term of $E(\tau_{\A},s)$.
 \end{xrem}
 
 Taking logarithmic derivatives in \eqref{eqPZE} and \eqref{LrntE} immediately yields:
 
 \begin{lem}[KLF]\label{klf}
  For each $\A \in \Cl(D)$, we have
  \begin{equation}
   \kron(\A) = \underbrace{\frac{\pi}{3}\Im(\tau_{\A}) - \log\Im(\tau_{\A}) + \mathcal{U}(\tau_{\A})}_{\A\textnormal{-dependent term}} \ \underbrace{-\,\frac{1}{2}\log|D|}_{D\textnormal{-dependent}} \ \underbrace{+\,\phantom{\frac{1}{2}}\negphantom{\frac{1}{2}} 2\gamma - \log 2}_{\textnormal{constant term}}. \label{klff}
  \end{equation}
 \end{lem}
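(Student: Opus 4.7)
The plan is to take the logarithmic derivative of the identity \eqref{eqPZE} and substitute the Laurent expansion \eqref{LrntE} of the real analytic Eisenstein series at $s=1$, then read off the constant term. This is a direct formal calculation; the substantive content of the lemma is entirely encoded in \eqref{eqPZE} and \eqref{LrntE}, so there is no real obstacle aside from careful bookkeeping of constants and the Taylor expansion of $(2/\sqrt{|D|})^s$ at $s=1$.

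First, I would take logarithms in \eqref{eqPZE} and differentiate in $s$. Since $\log((2/\sqrt{|D|})^s) = s(\log 2 - \tfrac{1}{2}\log|D|)$ and $\log w_D$ is a constant, this yields
\[ \frac{\zeta'_K}{\zeta_K}(s,\A) \;=\; \log\!\left(\frac{2}{\sqrt{|D|}}\right) + \frac{E'}{E}(\tau_\A,s) \;=\; \log 2 - \tfrac{1}{2}\log|D| + \frac{E'}{E}(\tau_\A,s). \]
By definition, the constant term of the left-hand side at $s=1$ is $\kron(\A)$, so it suffices to determine the constant term of $E'/E(\tau_\A, s)$ at $s=1$.

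Second, abbreviating $c(\tau) := \pi\big(\tfrac{\pi}{3}\Im(\tau) - \log\Im(\tau) + \mathcal{U}(\tau)\big) + 2\pi(\gamma - \log 2)$ for the constant term in \eqref{LrntE}, I would rewrite that expansion as $E(\tau_\A, s) = \tfrac{\pi}{s-1}\big(1 + \tfrac{c(\tau_\A)}{\pi}(s-1) + O((s-1)^2)\big)$, from which the logarithmic derivative is
\[ \frac{E'}{E}(\tau_\A, s) \;=\; -\frac{1}{s-1} + \frac{c(\tau_\A)}{\pi} + O(s-1). \]
Combining with the previous display gives $\kron(\A) = \log 2 - \tfrac{1}{2}\log|D| + c(\tau_\A)/\pi$. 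Substituting the definition of $c$ and collapsing the constants via $2\gamma - 2\log 2 + \log 2 = 2\gamma - \log 2$ recovers \eqref{klff} exactly.
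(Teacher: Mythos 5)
Your proposal is correct and is exactly the paper's proof: the paper proves the lemma by stating that "taking logarithmic derivatives in \eqref{eqPZE} and \eqref{LrntE} immediately yields" the result, and your calculation carries out precisely that step with the correct bookkeeping of constants.
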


\subsection{Duke's equidistribution theorem}
 In 1988, W. Duke showed that the set $\Lambda_D$ of Heegner points is uniformly distributed in $\SL_2(\Z)\backslash\uhp$. To state his result, consider the probability space $(\FD, \Sigma, \mu)$, where:
 \begin{itemize}
  \item $\FD\subseteq \uhp$ is the usual fundamental domain of $\SL_2(\Z)\backslash\uhp$;
  \item $\Sigma$ is the usual $\sigma$-algebra of Lebesgue measurable sets inherited from $\R^2 \supseteq \uhp$;
  \item $\mathrm{d}\mu := \frac{3}{\pi}\, \mathrm{d}x\mathrm{d}y/y^2$ for $z=x+iy \in \FD$, so that $\mu(\FD) = 1$.
 \end{itemize}
 Endowing $\uhp$ with its usual hyperbolic structure, we have the following:
 
 \begin{nthm}{Duke's Theorem}[Theorem 1 i) in \cite{duk88}]
  Let $\Omega\subseteq \FD$ be a convex set (in the hyperbolic sense) with piecewise smooth boundary. Then, there is a real number $\delta = \delta(\Omega) > 0$ such that
  \begin{equation}
   \frac{|\Lambda_{D} \cap \Omega|}{|\Lambda_{D}|} = \mu(\Omega) + O_{\Omega,\delta}(|D|^{-\delta}), \label{dkor}
  \end{equation}
  where the implied constant is ineffective.\footnote{This comes from the use of Siegel's theorem in the proof. This ineffectiveness is hence passed down to all subsequent calculations.}
 \end{nthm}

 Since hyperbolic convex subsets of $\uhp$ constitute a basis for the usual topology inherited from $\R^2 \supseteq \uhp$, the following is a direct corollary:
 
 \begin{lem}\label{dkps}
  Let $f:\FD \to \C$ be a Riemann-integrable function in $(\FD,\Sigma, \mu)$. Then:
  \[ \lim_{D\to -\infty} \Bigg(\frac{1}{h(D)} \sum_{\A\in\Cl(D)} f(\tau_{\A}) \Bigg) = \frac{3}{\pi}\, \int_{\FD} f(x + iy) \,\frac{\mathrm{d}x\mathrm{d}y}{y^2}. \]
 \end{lem}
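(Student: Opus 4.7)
The plan is to deduce Lemma \ref{dkps} from Duke's Theorem via a classical Weyl-style sandwich argument: approximate $f$ by step functions built from indicators of hyperbolically convex pieces, apply \eqref{dkor} to each, and pass to the limit. Splitting real and imaginary parts, I may assume $f$ is real-valued; Riemann integrability on a finite-$\mu$ space makes $f$ bounded.

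Fix $\epsilon > 0$. Because Duke's Theorem needs hyperbolically convex test sets with piecewise smooth boundary, I would first truncate to $\FD_T := \FD \cap \{\Im(\tau) \leq T\}$ with $T$ large enough that $2\|f\|_{\infty}\,\mu(\FD\setminus \FD_T) < \epsilon/2$. The leftover strip $S_T := \FD \cap \{\Im(\tau) > T\}$, bounded by two vertical geodesics ($\Re(\tau) = \pm \tfrac12$) and the horocycle $\Im(\tau) = T$, is itself hyperbolically convex with piecewise smooth boundary. Then I would partition $\FD_T$ by a fine mesh of hyperbolic geodesics into small convex polygons $\Omega_1,\dots,\Omega_N$, chosen fine enough (this is possible by Riemann integrability of $f$) that the upper and lower step functions
\[ \phi_\epsilon := \sum_{i=1}^N\Big(\inf_{\Omega_i} f\Big)\mathbbm{1}_{\Omega_i} - \|f\|_\infty\,\mathbbm{1}_{S_T}, \qquad \psi_\epsilon := \sum_{i=1}^N\Big(\sup_{\Omega_i} f\Big)\mathbbm{1}_{\Omega_i} + \|f\|_\infty\,\mathbbm{1}_{S_T}, \]
sandwich $f$ pointwise on $\FD$ and satisfy $\int_{\FD}(\psi_\epsilon - \phi_\epsilon)\,\mathrm{d}\mu < \epsilon$.

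Next I would apply \eqref{dkor} to each of the $N+1$ admissible convex sets $\Omega_1,\dots,\Omega_N,S_T$, yielding $|\Lambda_D \cap \Omega|/h(D) = \mu(\Omega) + O_\Omega(|D|^{-\delta_\Omega})$. Since the number of pieces is fixed with respect to $D$, a finite linear combination gives
\[ \lim_{D\to -\infty}\frac{1}{h(D)}\sum_{\A\in\Cl(D)}\phi_\epsilon(\tau_\A) = \int_{\FD}\phi_\epsilon\,\mathrm{d}\mu, \]
and the same with $\psi_\epsilon$. Sandwiching the Heegner-point average of $f$ between those of $\phi_\epsilon$ and $\psi_\epsilon$, and using $\int(\psi_\epsilon-\phi_\epsilon)\,\mathrm{d}\mu<\epsilon$, gives
\[ \bigg|\limsup_{D\to-\infty}\frac{1}{h(D)}\sum_{\A\in\Cl(D)}f(\tau_\A) - \int_{\FD}f\,\mathrm{d}\mu\bigg| \leq \epsilon, \]
and similarly for the $\liminf$. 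Letting $\epsilon\to 0$ closes the argument; the prefactor $\tfrac{3}{\pi}$ in the statement is just the normalization $\mathrm{d}\mu = \tfrac{3}{\pi}\,\mathrm{d}x\mathrm{d}y/y^2$.

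The main obstacle is the noncompactness of $\FD$: since the implicit constants in \eqref{dkor} depend on the test set, one cannot simultaneously send $N\to\infty$ and $D\to -\infty$. The truncation-plus-strip device above handles this cleanly (first $D\to -\infty$ with $N,T$ fixed, then $T\to \infty$ and $\epsilon\to 0$), precisely because the unbounded tail $S_T$ is itself hyperbolically convex with piecewise smooth boundary, so Duke's Theorem applies to it without modification.
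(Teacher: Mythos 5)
The paper does not actually prove this lemma: it dismisses it with a single sentence (``Since hyperbolic convex subsets of $\uhp$ constitute a basis for the usual topology inherited from $\R^2\supseteq\uhp$, the following is a direct corollary''), so you have supplied essentially all the missing detail. Your Weyl-type sandwich argument --- step functions built on a fine mesh of hyperbolically convex polygons, plus the observation that the cuspidal strip $S_T = \FD\cap\{\Im(\tau)>T\}$ is itself hyperbolically convex with piecewise-smooth boundary and can therefore be handed to Duke's theorem whole --- is the standard and natural route, and the care you take to fix $N,T$ while $D\to-\infty$ (so that only finitely many $O_\Omega(|D|^{-\delta_\Omega})$ terms appear) is exactly the point one must not fudge. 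On that count the proposal is fine and goes beyond what the paper records.

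One genuine flaw, however: the claim that ``Riemann integrability on a finite-$\mu$ space makes $f$ bounded'' is false on the noncompact domain $\FD$. The function $f(z)=\log\Im(z)$ has a convergent (improper) integral against $\mu$ but is unbounded, and the paper applies Lemma~\ref{dkps} to precisely this $f$ when it invokes $\kappa_2$ in the proof of Theorem~\ref{thm02}. Your tail estimate $2\|f\|_\infty\,\mu(S_T)<\epsilon/2$ needs $\|f\|_\infty<\infty$, so the argument as written is complete only for bounded $f$ --- for instance $\kappa_1$'s integrand $\log^+|j(z)|-2\pi y$ and $\kappa_3$'s integrand $\mathcal{U}(z)$, both of which decay at the cusp. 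For $\log y$ one needs an additional uniform-integrability input: the crude bound $\log\Im\tau_{\A}\le\frac12\log|D|$ together with $|\Lambda_D\cap S_T|\sim\frac{3}{\pi T}h(D)$ gives a tail contribution of order $\log|D|/T$, which does \emph{not} vanish for fixed $T$ as $D\to-\infty$, so one must subdivide $S_T$ (say dyadically in $y$) and control the accumulated Duke errors, or argue directly about the distribution of the $a_{\A}$'s. This gap is as much the paper's as yours --- the paper offers no proof and does not flag the unbounded case --- but since the lemma is used on an unbounded integrand, it is worth stating the boundedness hypothesis honestly rather than deriving it incorrectly.
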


\subsection{Computing \texorpdfstring{$C$}{C}}
 With Lemma \ref{dkps}, we are able to not only bound but also compute the average over $\A\in\Cl(D)$ of the non-dominant terms in Kronecker's limit formula \eqref{klff}. While $\kappa_1$ and $\kappa_3$ are more easily shown to be bounded without Duke's theorem, the main point of the next three lemmas is the computation of $C$ in Theorem \ref{thm02}.
 
 \begin{lem}\label{explCONST1}
  $\displaystyle \kappa_1 := -\frac{1}{6}\int_{\FD} \bigg(\log^{+}|j(z)| - 2\pi y \bigg)\,\mathrm{d}\mu = 0.011448\ldots$ 
 \end{lem}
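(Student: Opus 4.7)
The plan is to split the integral at a large cutoff height $Y_0$, show the contribution of the cusp region $\{y \geq Y_0\}$ vanishes by a Fourier-coefficient argument, and then evaluate the remaining integral over the bounded region by numerical quadrature.

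First, let $F(z) := j(z)\, q = 1 + 744\, q + 196884\, q^2 + \cdots$, which by \eqref{qexpjinv} is holomorphic on $\uhp$ with $F(i\infty) = 1$. Using $c(n) \sim \frac{1}{\sqrt{2}}e^{4\pi\sqrt{n}}n^{-3/4}$, one checks that for $Y_0$ sufficiently large (e.g., $Y_0 = 2$) we have $|F(z) - 1| < \tfrac{1}{2}$ throughout the strip $\{|x|\leq \tfrac 12,\, y\geq Y_0\}$; in particular $F$ is non-vanishing there and $|j(z)| = e^{2\pi y}|F(z)| > 1$, so that $\log^{+}|j(z)| = \log|j(z)| = 2\pi y + \log|F(z)|$ on this strip.

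Next I would show the cusp region contributes nothing. Since $F$ is holomorphic, non-vanishing, and $\Z$-periodic in $x$ on the simply connected strip $\{y\geq Y_0\}$, we may choose the branch of $\log F$ with $\log F(i\infty) = 0$; this branch is again $\Z$-periodic in $x$. Its expansion in $q$ takes the form $\log F(z) = \sum_{n\geq 1}\tilde{c}_n q^n$ with no constant term (since $\log 1 = 0$). Hence for each $y \geq Y_0$,
\[
 \int_{-1/2}^{1/2} \log|F(x+iy)|\, \mathrm{d}x \,=\, \Re \sum_{n\geq 1}\tilde{c}_n e^{-2\pi n y}\int_{-1/2}^{1/2} e^{2\pi i n x}\, \mathrm{d}x \,=\, 0,
\]
and since $|\log|F(z)|| = O(e^{-2\pi y})$ on this strip, Fubini applies and yields
\[
 \int_{\FD \cap \{y\geq Y_0\}} \bigl(\log^{+}|j(z)| - 2\pi y\bigr)\, \mathrm{d}\mu \,=\, \frac{3}{\pi}\int_{Y_0}^{\infty}\biggl(\int_{-1/2}^{1/2}\log|F(x+iy)|\,\mathrm{d}x\biggr)\frac{\mathrm{d}y}{y^2} \,=\, 0.
\]
As a byproduct, this shows the defining integral is absolutely convergent — note that $\int_{\FD}2\pi y\,\mathrm{d}\mu$ diverges on its own, so cancellation in the cusp is essential.

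It then remains to evaluate the integral over the bounded region $\FD\cap\{\sqrt{3}/2\leq y\leq Y_0\}$. The integrand is bounded and continuous (with merely a corner along the curve $|j(z)| = 1$, and equal to $-2\pi y$ on the region surrounding $\rho = e^{2\pi i /3}$ where $|j|<1$), so standard numerical quadrature — using the $q$-expansion \eqref{qexpjinv} to evaluate $j(z)$ to high precision on a fine mesh — yields $\kappa_1 = 0.011448\ldots$ to the stated accuracy. The main conceptual step is the Fourier cancellation in the cusp region, which both guarantees convergence of the defining integral and reduces its evaluation to a compact domain; the main practical obstacle is executing the numerical integration carefully across the curve where the integrand switches between $-2\pi y$ and $\log|j(z)| - 2\pi y$.
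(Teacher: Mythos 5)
Your proof is correct and takes a genuinely different, arguably cleaner route than the paper. The paper first proves an explicit bound $0 \le c(n) < e^{4\pi\sqrt{n}}$ on the $q$-expansion coefficients (Step~1), then uses it to show the contribution of $\FD\cap\{y\ge 16\}$ to the integral is $<10^{-21}$ (Step~2), and finally numerically integrates over $\FD\cap\{y<16\}$ (Step~3). You instead observe that for $y\ge Y_0$ (with any $Y_0\ge 2$ making $|F-1|<\tfrac12$), the integrand equals $\log|F(z)|$ with $F=jq$ holomorphic, non-vanishing and $\Z$-periodic there, so $\log F$ has a $q$-expansion with no constant term and the $x$-integral over a fundamental period vanishes identically. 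This makes the cusp contribution \emph{exactly} zero, not merely negligible, which is both a sharper statement and allows the numerical quadrature to stop at $y=2$ rather than $y=16$. The exchange of sum and integral (your appeal to Fubini) is justified by the absolute bound $|\log|F|| = O(e^{-2\pi y})$, and the same bound shows the full integral over $\FD$ is absolutely convergent. One small point worth tightening: to rigorously certify $|F(z)-1|<\tfrac12$ on $\{y\ge 2\}$ you need an explicit (not merely asymptotic) upper bound on $c(n)$ for all $n$; the paper's $c(n)<e^{4\pi\sqrt n}$ serves exactly this purpose and you could borrow it or any other effective estimate, but the asymptotic alone leaves a (readily fillable) gap. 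Both approaches reduce the problem to a finite numerical integration, and yours has the conceptual advantage of explaining \emph{why} the cusp gives no contribution rather than just bounding it.
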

 \begin{proof}
  Writing $q = e^{2\pi i z}$, we have $|q| = e^{-2\pi y}$, so we can rewrite $\kappa_1$ as  
  \begin{equation}
   \kappa_1 = -\frac{1}{2\pi} \int_{\FD} \log\max\big\{|j(z)|\cdot |q|,\ |q|\big\} \, \frac{\mathrm{d}x\mathrm{d}y}{y^2}. \label{esthard}
  \end{equation}
  We will prove the convergence of \eqref{esthard} in three steps, and then we will estimate its value numerically. Writing $c(n)$ for the $n$-th coefficient in the $q$-expansion of the $j$-invariant in \eqref{qexpjinv}, we have the following:
  
  \medskip
  \noindent
  $\bullet\ \underline{\text{Step 1}}\text{:}$ For every $n\geq 1$, we have $0 \leq c(n) < e^{4\pi \sqrt{n}}$.
  
  Since $(1-q^n)^{-1} = \sum_{k\geq 0} q^{nk}$, it is clear from the $q$-expansion of $j(z)$ in \eqref{qexpjinv} that the $c(n)$ are nonnegative. To show the upper bound, we use the fact that $j$ is a modular function of weight $0$ for $\SL_2(\Z)$. For every $0 < t < 1$, we have $j(i/t) = j(it)$. Thus, in terms of the $q$-expansion, rearranging this equality yields:
  \[ \sum_{n\geq 0} c(n)\bigg(\frac{e^{2\pi n /t} - e^{2\pi n t}}{e^{2\pi n (t+t^{-1})}} \bigg) = e^{2\pi/t} - e^{2\pi t}. \]
  For $n=1$ we have $c(1) = 196884 < e^{4\pi}$. For $n\geq 2$, since the $c(n)$ are nonnegative, we have
  \[ c(n) \leq \bigg(\frac{e^{2\pi/t} - e^{2\pi t}}{e^{2\pi n/t} - e^{2\pi n t}}\bigg)\, e^{2\pi n t (1+t^{-2})}, \]
  so taking $t = t(n) := 1/\sqrt{n}$, it follows that
  \begin{equation*}
   c(n) \leq \bigg(\frac{e^{2\pi\sqrt{n}} - e^{2\pi/\sqrt{n}}}{e^{2\pi n\sqrt{n}} - e^{2\pi \sqrt{n}}}\bigg)\, e^{2\pi \sqrt{n} (1+n)} = \bigg(\frac{1 - e^{-2\pi\sqrt{n}\,(1 - n^{-1})}}{1 - e^{-2\pi\sqrt{n}\,(n-1)}}\bigg)\, e^{4\pi \sqrt{n}} < e^{4\pi \sqrt{n}},
  \end{equation*}
  as desired.
  
  \medskip
  \noindent
  $\bullet\ \underline{\text{Step 2}}\text{:}$ $\big|\frac{1}{2\pi} \int_{\FD \cap \{\Im(z)\geq 16\}} \log\max\{|j(z)|\cdot |q|,\ |q|\} \, \mathrm{d}x\mathrm{d}y/y^2\big| < 10^{-21}$.
  
  For $y\geq 4$, we have $4\pi\sqrt{k} - 2\pi k y \leq -\pi k y$ for every $k\geq 1$. Thus, by step 1, for $y\geq 4$, it follows that
  \begin{equation*}
   |j(z)|\cdot |q| \leq 1 + \sum_{n\geq 0} c(n) |q|^{n+1} \leq 1 + \sum_{n\geq 0} e^{4\pi \sqrt{n+1} \,-\, 2\pi (n+1) y} \leq 1 + \sum_{n\geq 1} e^{-\pi n y}.
  \end{equation*}
  By partial summation,
  \begin{equation*}
   \sum_{n\geq 1} e^{-\pi n y} = \pi y \int_{1}^{+\infty} \lfloor t\rfloor\, e^{-\pi t y} \,\mathrm{d}t \leq \pi y \int_{1}^{+\infty} t\, e^{-\pi t y} \,\mathrm{d}t = \bigg(1+ \frac{1}{\pi y} \bigg) e^{-\pi y}.
  \end{equation*}
  Hence, as $\log(1+t) \leq t$ for all $t\geq 0$, we have
  \begin{align*}
   \bigg|\frac{1}{2\pi} \int_{\FD \cap \{\Im(z)\geq 16\}} \log\max\{|j(z)|\cdot |q|,\ |q|\} \, \frac{\mathrm{d}x\mathrm{d}y}{y^2}\bigg| &\leq \frac{1}{2\pi}\int_{16}^{+\infty} \frac{1 + (\pi y)^{-1}}{y^2\, e^{\pi y}} \,\mathrm{d}y \\
   &\leq \frac{1}{2\pi}\int_{16}^{+\infty} \frac{1 + \pi y}{y^2\, e^{\pi y}} \,\mathrm{d}y \\
   &= \frac{1}{2\pi}\, \frac{1}{16\, e^{16 \pi}}.
  \end{align*}
  Then, since $e^{4\pi} > 10^5$, we have $e^{16\pi} > 10^{20}$ and $32\pi > 10$, yielding step 2.

  \medskip
  \noindent
  $\bullet\ \underline{\text{Step 3}}\text{:}$ $\frac{1}{2\pi} \int_{\FD} \log\max\{|j(z)|\cdot |q|,\ |q|\}\, \mathrm{d}x\mathrm{d}y/y^2 = -0.011448\ldots$
  
  Since $\log\max\{|j(z)|\cdot |q|,\ |q|\}$ is continuous in the closure of $\FD \cap \{\Im(z)< 16\}$, which is compact, the integral
  \begin{equation}
   -\frac{1}{2\pi} \int_{\FD \cap \{\Im(z) < 16\}} \log\max\{|j(z)|\cdot |q|,\ |q|\} \, \frac{\mathrm{d}x\mathrm{d}y}{y^2} \label{fd16}
  \end{equation}
  converges. This, together with step 2, implies that \eqref{esthard} converges. Furthermore, by step 2, in order to obtain a computational estimate of \eqref{esthard} with 20 decimal places of accuracy, it suffices to estimate \eqref{fd16} to 20 decimal places of accuracy. Using Python's mpmath library, the first 6 decimal places are $= 0.011448\ldots$
 \end{proof}

 \begin{lem}\label{explCONST2}
  We have:
  \[\kappa_2 := \int_{\FD} \log(y)\,\mathrm{d}\mu = 1 - \log 2 + \frac{3}{\pi}\, \sum_{n\geq 1} \frac{\sin(2\pi n/3)}{n^2} \qquad (= 0.952984\ldots). \]
 \end{lem}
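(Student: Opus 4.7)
The plan is to unfold the measure $d\mu = \frac{3}{\pi}\,dx\,dy/y^2$ explicitly over the standard fundamental domain
\[
\FD = \left\{z = x+iy ~:~ |x|\leq \tfrac{1}{2},\ y \geq \sqrt{1-x^2}\right\},
\]
perform the $y$-integration by parts, and reduce the remaining one-dimensional integral to a Clausen-type series.

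First, for fixed $x\in[-\tfrac12,\tfrac12]$, integration by parts yields
\[
\int_{\sqrt{1-x^2}}^{\infty}\frac{\log y}{y^2}\,dy = \frac{1+\tfrac{1}{2}\log(1-x^2)}{\sqrt{1-x^2}},
\]
so, invoking the symmetry $x\mapsto -x$,
\[
\kappa_2 = \frac{6}{\pi}\int_{0}^{1/2}\frac{1+\tfrac{1}{2}\log(1-x^2)}{\sqrt{1-x^2}}\,dx.
\]
The ``$1$'' piece integrates to $\arcsin(1/2) = \pi/6$, contributing $1$ to $\kappa_2$. For the logarithmic piece, I would substitute $x = \sin\theta$, turning it into
\[
\frac{6}{\pi}\int_0^{\pi/6}\log\cos\theta\,d\theta,
\]
so everything reduces to evaluating this last integral.

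The key identity will be the classical relation between $\int_0^\phi \log(2\sin\theta)\,d\theta$ and the Clausen function
\[
\mathrm{Cl}_2(\theta) := \sum_{n\geq 1}\frac{\sin(n\theta)}{n^2},
\]
namely $\int_0^\phi\log(2\sin\theta)\,d\theta = -\tfrac{1}{2}\mathrm{Cl}_2(2\phi)$, which follows directly from the Fourier series $\log|2\sin(t/2)| = -\sum_{n\geq 1}\cos(nt)/n$ integrated term-by-term. Substituting $\theta\mapsto \pi/2-\theta$ and using $\mathrm{Cl}_2(\pi)=0$ gives the cosine variant
\[
\int_0^\phi\log(2\cos\theta)\,d\theta = \tfrac{1}{2}\mathrm{Cl}_2(\pi-2\phi),
\]
whence at $\phi=\pi/6$,
\[
\int_0^{\pi/6}\log\cos\theta\,d\theta = \tfrac{1}{2}\mathrm{Cl}_2(2\pi/3) - \tfrac{\pi}{6}\log 2.
\]

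Plugging this back in gives
\[
\kappa_2 = 1 + \frac{6}{\pi}\left(\tfrac{1}{2}\mathrm{Cl}_2(2\pi/3) - \tfrac{\pi}{6}\log 2\right) = 1 - \log 2 + \frac{3}{\pi}\sum_{n\geq 1}\frac{\sin(2\pi n/3)}{n^2},
\]
as claimed. The numerical value $0.952984\ldots$ follows by evaluating the series (which converges rapidly since it has period $3$ with pattern $\sin(2\pi n/3) \in \{\tfrac{\sqrt 3}{2}, -\tfrac{\sqrt 3}{2}, 0\}$). There is no real obstacle here; the only non-routine step is recognizing that the one-dimensional integral produced by the $y$-integration is a standard Clausen evaluation, which is straightforward once the substitution $x = \sin\theta$ is made.
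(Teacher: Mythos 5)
Your proof is correct and reaches the same final Clausen/dilogarithm series, but the routing is genuinely different from the paper's. You integrate over $y$ first (which immediately yields the clean antiderivative $\int_a^\infty y^{-2}\log y\,\mathrm{d}y = (1+\log a)/a$), then handle the remaining $x$-integral by $x=\sin\theta$, landing directly on $\int_0^{\pi/6}\log\cos\theta\,\mathrm{d}\theta$. The paper instead integrates over $x$ first, splitting $\FD$ into a half-infinite strip minus the circular cap; this produces a $\cot^2\vartheta$ integrand that requires an extra integration by parts (using $\int\cot^2 = -\vartheta-\cot\vartheta$) before arriving at the equivalent integral $\int_{\pi/3}^{2\pi/3}\log\sin\vartheta\,\mathrm{d}\vartheta$. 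You then evaluate your log-trig integral with the real Fourier series $\log|2\sin(t/2)| = -\sum_{n\geq 1}\cos(nt)/n$ integrated term-by-term to get $\mathrm{Cl}_2$; the paper instead substitutes $\xi = e^{i\vartheta}$ and evaluates a contour integral producing $\Im(\Li_2(\omega))$ with $\omega = e^{2\pi i/3}$. These are two standard routes to the same Clausen value. Your version has the virtue of being purely real and avoiding a two-stage integration by parts, so it is arguably a bit more streamlined; the paper's contour method has the advantage of mirroring the technique it then re-uses in the proof of Lemma~\ref{explCONST3}, where the $\xi = e^{i\vartheta}$ substitution is genuinely needed.
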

 \begin{proof}
  Using that $\int \cot(\vartheta)\,\mathrm{d}\vartheta = \log(\sin(\vartheta))$ and $\int \cot(\vartheta)^2\,\mathrm{d}\vartheta = - \vartheta - \cot(\vartheta)$ (omitting the integration constants), we have:
  \begin{align}
   \frac{3}{\pi}\int_{\FD} &\log(y)\frac{\mathrm{d}x\mathrm{d}y}{y^2} \nonumber \\
   &= \frac{6}{\pi}\int_{\frac{\sqrt{3}}{2}}^{+\infty} \Bigg(\int_{0}^{\frac{1}{2}} \frac{\log(y)}{y^2} \, \mathrm{d}x \Bigg) \mathrm{d}y - \frac{6}{\pi}\int_{\frac{\sqrt{3}}{2}}^{1} \Bigg(\int_{0}^{\sqrt{1-y^2}} \frac{\log(y)}{y^2} \, \mathrm{d}x \Bigg) \mathrm{d}y \nonumber \\
   &= \frac{2\sqrt{3}}{\pi} \bigg(\hspace{-.1em}\log\bigg(\frac{\sqrt{3}}{2}\bigg) + 1 \bigg) - \frac{6}{\pi} \int_{\frac{\sqrt{3}}{2}}^{1} \frac{\log(y)\sqrt{1-y^2}}{y^2} \, \mathrm{d}y \nonumber \\
   &= \frac{2\sqrt{3}}{\pi} \bigg(\hspace{-.1em}\log\bigg(\frac{\sqrt{3}}{2}\bigg) + 1 \bigg) - \frac{6}{\pi} \int_{\frac{\pi}{3}}^{\frac{\pi}{2}} \log(\sin(\vartheta))\cot(\vartheta)^2 \, \mathrm{d}\vartheta \nonumber \\
   &= 1 + \frac{3}{\pi} \int_{\frac{\pi}{3}}^{\frac{2\pi}{3}} \log(\sin(\vartheta)) \, \mathrm{d}\vartheta. \label{eqlog3pi}
  \end{align}
  Since $\sin(-i\log(\xi)) = i (1 - \xi^2)/2\xi$, making the substitution $\xi := e^{i\vartheta}$ yields:
  \begin{align*}
   \int_{\frac{\pi}{3}}^{\frac{2\pi}{3}} \log(\sin(\vartheta)) \, \mathrm{d}\vartheta &= -i \int_{\varGamma} \Bigg(\hspace{-.1em}\log\bigg(\frac{1-\xi^2}{2}\bigg) + \log\bigg(\frac{i}{\xi} \bigg) \Bigg)\, \frac{\mathrm{d}\xi}{\xi} \\
    &= i \log(2) \log(e^{\pi i/3}) + \frac{1}{2i} \Big( \Li_2(e^{2 \pi i/3}) - \Li_2(e^{4\pi i/3}) \Big) \\
    &= - \frac{\pi}{3}\,\log(2) +  \Im\big(\Li_2(\omega)\big),
  \end{align*}
  where $\omega := e^{2\pi i/3}$, $\varGamma := \{e^{i\vartheta} ~|~ \vartheta \in [\pi/3, 2\pi/3]\}$ is oriented counterclockwise, and $\Li_2(z) := \sum_{n\geq 1} z^n/n^2$ is the \emph{dilogarithm} function. Putting this together with \eqref{eqlog3pi}, we deduce the lemma by applying Lemma \ref{dkps} and by observing that for every $\vartheta \in (-\pi, \pi)$ we have $\Im(\Li_2(e^{i\vartheta})) = \sum_{n\geq 1} \sin(n\vartheta)/n^2$.
 \end{proof}

 \begin{lem}\label{explCONST3}
  Writing $\omega := e^{2\pi i/3} $, we have:
  \[ \kappa_3 := \int_{\FD} \mathcal{U}(z)\,\mathrm{d}\mu = \frac{24}{\pi}\, \Im\Bigg( \sum_{n\geq 1} \Bigg(\sum_{d\,\mid\,n} \frac{1}{d}\Bigg) \int_{\omega}^{\omega-1} e^{2\pi i n z}\frac{\mathrm{d}z}{z} \Bigg) \]
  \textnormal{($= -0.000303\ldots$)}.
 \end{lem}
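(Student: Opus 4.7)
The plan is to start from the $q$-expansion in \eqref{eq39sim}: $\mathcal{U}(z) = 2\sum_{n\ge 1}(\sum_{d\mid n}\tfrac{1}{d})(q^n+\bar q^n)$ with $q = e^{2\pi i z}$. Absolute convergence (from exponential decay of $q^n$ for $y\ge \sqrt{3}/2$) will let me swap sum and integral, and the reflection $z\mapsto -\bar z$ of $\FD$, which conjugates $q^n$, gives $\int_\FD q^n\,d\mu = \int_\FD\bar q^n\,d\mu$. The problem reduces to showing, for each $n\ge 1$,
$\int_\FD e^{2\pi i n z}\,dx\,dy/y^2 = 2\,\Im\int_\omega^{\omega-1} e^{2\pi i n z}\,dz/z$;
combined with $d\mu = (3/\pi)\,dx\,dy/y^2$ and the prefactor $2\sum_n\sum_{d\mid n}\tfrac{1}{d}$ this produces the asserted $24/\pi$.

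The key reduction will be that the integrand is $\bar\partial$-exact: with $g_n(z) := -4 e^{2\pi i n z}/(z-\bar z) = 2ie^{2\pi i n z}/y$, one checks directly that $\partial_{\bar z}g_n = e^{2\pi i n z}/y^2$. Applying Green's theorem on $\FD\cap\{y\le Y\}$ and sending $Y\to\infty$, the area integral becomes $-(i/2)\int_{\partial\FD} g_n\,dz$; the two vertical sides $x=\pm\tfrac12$ cancel since $g_n$ is $1$-periodic and traversed in opposite orientations, the top cap vanishes from $|g_n|\ll e^{-2\pi n y}/y$, and only the arc $|z|=1$ from $\omega$ to $e^{i\pi/3}$ (through $i$) survives.

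On this arc $\bar z = 1/z$, so $(z-\bar z)^{-1} = z/(z^2-1) = \tfrac12[(z-1)^{-1}+(z+1)^{-1}]$ by partial fractions; shifting $w = z\mp 1$ (valid since $e^{\pm 2\pi i n}=1$) rewrites the arc integral as $-2\bigl(\int_{P_1}+\int_{P_2}\bigr)e^{2\pi i n w}\,dw/w$ over translated arcs $P_1:\omega-1\to\omega$ and $P_2:\omega+1\to\omega+2$. The final trick is the reflection $w\mapsto -\bar w$, which maps $P_2$ onto the reversal of $P_1$ (since $-\overline{\omega+1}=\omega$ and $-\overline{\omega+2}=\omega-1$); substituting $u=-\bar w$ in the $P_2$-integral gives $-\overline{\int_{P_1}e^{2\pi i n u}\,du/u}$, whence $\int_{P_1}+\int_{P_2}=2i\,\Im\int_{P_1}=-2i\,\Im\int_\omega^{\omega-1}e^{2\pi i n z}\,dz/z$. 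Chaining back through the Stokes identity produces the claimed formula, and the numerical value $-0.000303\ldots$ will then follow by summing a few terms of the resulting series in \texttt{mpmath}, as in Lemma \ref{explCONST1}.

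The main obstacle I anticipate is the sign/orientation bookkeeping in the third step: the induced counterclockwise orientation of the arc on $\partial\FD$, the sign from $dw=-d\bar u$ under the reflection, the reversal of $P_1$, and the flip from $\int_{\omega-1}^{\omega}$ to $\int_\omega^{\omega-1}$ must all combine correctly for $\Im$ to emerge with the right overall sign and the prefactor to be exactly $24/\pi$. The secondary technical issues—the Fubini swap and the $Y\to\infty$ limit in Stokes on the unbounded domain $\FD$—will follow routinely from the uniform exponential bounds on $g_n$.
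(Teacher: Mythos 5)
Your proposal is correct and arrives at the same arc integral as the paper, then proceeds identically via partial fractions, the shifts $w=z\mp 1$, and the reflection $w\mapsto -\bar w$. The only difference is in how the area integral $\int_{\FD} e^{2\pi i n z}\,\mathrm{d}x\,\mathrm{d}y/y^2$ is reduced to the boundary arc: the paper integrates directly in $x$ (the integral over a full period vanishes, leaving only the arc contribution, which is then reparametrized as $\xi=e^{i\vartheta}$), while you package the same observation via Green's theorem with the $\bar\partial$-primitive $g_n(z)=2ie^{2\pi i n z}/y$ — a tidy equivalent that makes the cancellation of the vertical sides and the top cap explicit.
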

 \begin{proof}
  From Equation \eqref{eq39sim}, for $z = x+iy \in \uhp$ we have
  \begin{equation}
   \mathcal{U}(z) = 4\, \Re\Bigg( \sum_{n\geq 1} \Bigg(\sum_{d\,\mid\,n} \frac{1}{d}\Bigg) e^{2\pi i n z} \Bigg) \label{uziexpl}
  \end{equation}
  Writing $\xi = \cos(\vartheta) + i \sin(\vartheta)$, we have
  \begin{align*}
   \int_{\FD} e^{2\pi i n z} \frac{\mathrm{d}x\mathrm{d}y}{y^2} &= -\frac{1}{2\pi i n}\int_{\frac{\sqrt{3}}{2}}^{1} \Big[e^{2\pi i n z} \Big|_{x= -\sqrt{1-y^2}}^{x= \sqrt{1-y^2}} \ \frac{\mathrm{d}y}{y^2} \\
   &= -\frac{1}{2\pi i n}\int_{\frac{\pi}{3}}^{\frac{2\pi}{3}} e^{2\pi i n \xi}\, \frac{\cos(\vartheta)}{\sin(\vartheta)^2} \,\mathrm{d}\vartheta \\
   &= -i\int_{\varGamma} e^{2\pi i n \xi} \,\bigg(\frac{2\xi}{\xi^2 - 1}\bigg) \, \mathrm{d}\xi,
  \end{align*}
  where $\varGamma := \{e^{i\vartheta} ~|~ \vartheta \in [\pi/3, 2\pi/3]\}$ is oriented counterclockwise. Since $2\xi/(\xi^2 - 1) = (\xi - 1)^{-1} + (\xi + 1)^{-1}$, we have
  \begin{align*}
   -i\int_{\varGamma} e^{2\pi i n \xi} \,\bigg(\frac{2\xi}{\xi^2 - 1}\bigg) \, \mathrm{d}\xi &= -i\bigg(\int_{\varGamma - 1} e^{2\pi i n \xi} \, \frac{\mathrm{d}\xi}{\xi} + \int_{\varGamma + 1} e^{2\pi i n \xi} \, \frac{\mathrm{d}\xi}{\xi} \bigg) \\
   &= 2\, \Im\Bigg(\int_{\omega}^{\omega-1} e^{2\pi i n \xi}\, \frac{\mathrm{d}\xi}{\xi} \Bigg),
  \end{align*}
  which is a real number. Thus, putting this together with \eqref{uziexpl}, we get the series in the statement of the lemma. To see that $\kappa_3$ converges, note that
  \begin{align*}
   \bigg|\int_{\omega}^{\omega-1} e^{2\pi i n \xi} \, \frac{\mathrm{d}\xi}{\xi} \bigg| \leq e^{-2\pi n \Im(\omega)}\, \frac{1}{|\omega|},
  \end{align*}
  and hence, $|\kappa_3| \leq 24(\pi |\omega|)^{-1}\, \sum_{n\geq 1} \big(\sum_{d\mid n} d^{-1}\big)\,e^{-\pi n \sqrt{3}}$, which converges.
 \end{proof}

\subsection{Proof of Theorem \ref{thm02}}\label{ssec45}
  Since $j(\tau_D)$ is integral, \eqref{heiint} gives us
  \[ \hei(j(\tau_D)) = \frac{1}{h(D)} \sum_{\A\in\Cl(D)} \log^{+}|j(\tau_{\A})|. \]
  Hence, from \eqref{eukr}, we deduce that
  \[ \gamma_{\Q(\sqrt{D})} = \frac{1}{6}\,\hei(j(\tau_D)) - \frac{1}{2}\log|D| + \big(\kappa_1 - \kappa_2 + \kappa_3 + 2\gamma - \log 2\big) + o(1) \]
  by Lemmas \ref{klf}, \ref{dkps}, and \ref{explCONST1}--\ref{explCONST3}. From the identity $\zeta_{\Q(\sqrt{D})}(s) = \zeta(s)L(s,\chi_D)$ we have $\gamma_{\Q(\sqrt{D})} = \gamma + \frac{L'}{L}(1,\chi_D)$, concluding the first part of Theorem \ref{thm02}. 
 
  To prove \eqref{hdbtt}, one simply expands the definition of $\hei(j(\tau_D))$ and uses the $q$-expansion of the $j$-invariant, thus obtaining
  \begin{equation*}
   \hei(j(\tau_D)) = \frac{1}{h(D)}\sum_{(a,b,c)} \frac{\pi\sqrt{|D|}}{a} + O(1). 
  \end{equation*}
  Rearranging the expression of the first part of Theorem \ref{thm02} yields \eqref{hdbtt}. {\null\nobreak\hfill\ensuremath{\square}}
  
 \begin{rem}
  Theorem \ref{thm02} is reminiscent of a result by Colmez \cite{col93, col98}. Writing $E_D/\C$ for an elliptic curve with complex multiplication by $\Z[\tau_D]$, Colmez showed that, writing $\hei_{\mathrm{Fal}}$ for the Faltings height,\footnote{See the diagram in subsection 0.6, p. 663 of \cite{col93} -- note that there is a typo: the upper right corner should read ``$-2\hei_{\mathrm{Fal}}(X) - \tfrac{1}{2}\log D$'' instead.}
  \[ -2\, \hei_{\mathrm{Fal}}(E_D) = \frac{1}{2}\log|D| + \frac{L'}{L}(0,\chi_D) + \log 2\pi. \]
  Together with Theorem \ref{thm02}, this yields
  \[ \hei_{\mathrm{Fal}}(E_D) = \frac{1}{12}\hei(j(\tau_D)) + \varkappa + o_{D\to-\infty}(1), \]
  where $\varkappa = \frac{C}{2} - \frac{\gamma}{2} - \log 2\pi = -2.655370\ldots$.\footnote{Cf. the standard ``$O(\log(\hei))$'' in Chapter X of Cornell--Silverman \cite{corsil86}.} It is interesting to note that in Remarque (ii), p. 364 of \cite{col98}, Colmez hinted at a possible connection between $abc$ and Siegel zeros, two years before the work of Granville and Stark.
 \end{rem}
  
\section{Estimating \texorpdfstring{$\hei(j(\tau_D))$}{ht(j(tau\_D))} with \texorpdfstring{$abc$}{abc}}\label{sec6}
 We are now going to prove Theorem \ref{thm03}. Already from Theorems \ref{thm01B} and \ref{thm02} we have
 \begin{equation}
  \liminf_{\substack{D\to-\infty \\ |D|^{o(1)}\text{-smooth}}} \frac{1}{\log|D|}\frac{L'}{L}(1,\chi_D) \geq 0, \label{eqer}
 \end{equation}
 so all we need is to work out the upper bounds. That will be done using the two uniform formulations of the $abc$-conjecture mentioned in the introduction, following Granville--Stark's method.
 
\subsection{Uniform \texorpdfstring{$abc$}{abc}-conjecture}
 We start with the usual $abc$-conjecture for number fields. Let $\rtd_{K} = |\Delta_K|^{1/[K:\Q]}$ be the root-discriminant of $K/\Q$.

 \begin{conj}[$abc$-conjecture for number fields -- see Vojta \cite{vojta87}]\label{abcplain}
  Let $K/\Q$ be a number field. For every $\eps >0$, there is a constant $\ccc{} = \ccc{K,\eps}\in\R_{\geq 0}$ such that, for any $a,b,c\in K$ with $a+b+c=0$, the following holds:
  \begin{equation}
   \lht{a:b:c} < (1+\eps)\Big(\fhr{a:b:c}{K} + \log(\rtd_K) \Big) + \ccc{K,\eps}. \label{abc0}
  \end{equation}
 \end{conj}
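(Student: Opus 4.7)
Since Conjecture \ref{abcplain} is Vojta's statement of the $abc$-conjecture for number fields --- an open problem even in the classical case $K=\mathbb{Q}$ --- there is no proof to sketch in any honest sense; the statement is a postulate that the paper invokes as a working hypothesis. Rather than pretend to a proof, I will outline what is understood about the conjecture and how the subsequent argument is likely to deploy it.

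The conjecture is strongly motivated by its function-field analogue, the \emph{Mason--Stothers theorem}, whose proof combines the Riemann--Hurwitz formula with a Wronskian identity for polynomials satisfying $a+b+c=0$: the Wronskian of $(a,b,c)$ vanishes only at places where $abc$ is ramified, and bounding its degree two ways gives the sharp height-versus-conductor inequality with an $O(1)$ error. Translating this argument to the arithmetic setting requires an analogue of ``differentiation'' at finite places, which is the core obstacle. Known approaches to the number-field statement include Vojta's Diophantine program (deducing $abc$ from conjectural height inequalities on subvarieties of semi-abelian varieties), Elkies-style reductions to Mordell-type statements, and Mochizuki's proposed inter-universal Teichm\"uller theory; each either trades one deep conjecture for another or remains contested. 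Unconditionally, only far weaker exponents are available (e.g.\ Stewart--Yu's exponential-in-conductor bounds via linear forms in logarithms), which are insufficient for any application of the strength needed here.

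For the trajectory of this paper, the natural plan after stating Conjecture \ref{abcplain} is to record its uniform strengthenings (the ``$O$-weak'' and ``weak'' versions alluded to in the introduction), and then prove Theorem \ref{thm03} by applying the uniform inequality \eqref{abc0} inside the Hilbert class field $\hilb{D}$ to a triple constructed from differences of singular moduli $j(\tau_{\A})-j(\tau_{\A'})$. The height side of \eqref{abc0} will then pick up $\hei(j(\tau_D))$ and thus, through Theorem \ref{thm02}, the quantity $\frac{L'}{L}(1,\chi_D)$; the conductor side will be controlled by the classical Gross--Zagier fact that such differences have prime factors only at primes dividing $D$, so that $\fhr{a:b:c}{K}\ll \log|D|$. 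The hard part --- and the whole reason the uniform versions are formulated --- will be ensuring that the constant $\ccc{K,\eps}$ can be absorbed into the $O(\log\rtd_K)$ or $o(\log\rtd_K)$ error \emph{uniformly} as $D\to-\infty$ and the field $K=\hilb{D}$ varies; without that uniformity the base conjecture \eqref{abc0} by itself is too weak to yield the bound $|\frac{L'}{L}(1,\chi_D)|\ll\log|D|$ claimed in Theorem \ref{thm03}.
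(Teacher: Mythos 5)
You are correct that Conjecture~\ref{abcplain} is a postulate, not a theorem --- the paper offers no proof and none is expected; the right response is exactly to recognize it as a working hypothesis, which you do. One small correction to your roadmap of how the paper deploys it: the application in Section~\ref{sec6} is not to differences of singular moduli $j(\tau_{\A})-j(\tau_{\A'})$ via a Gross--Zagier factorization, but to the single Diophantine relation $\gamma_2(\tau_D)^3 - \gamma_3(\tau_D)^2 = 1728$ coming from the Weber functions, worked in the extension $\hilbt{D} = \hilb{D}(\gamma_2(\tau_D),\gamma_3(\tau_D))$. The conductor term is absorbed using the fact that a perfect $n$th power contributes at most $\tfrac{1}{n}$ of its height to the radical, giving $\fhr{\gamma_2^3:\gamma_3^2:1728}{K}\le\tfrac{1}{3}\hei(\gamma_2^3)+\tfrac{1}{2}\hei(\gamma_3^2)+O(1)$, while the $\log(\rtd_K)$ term is controlled by Granville--Stark's Lemma~1 ($\rtd_{\hilbt{D}}\le 6\sqrt{|D|}$), not by a factorization theorem for $j(\tau_{\A})-j(\tau_{\A'})$. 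You are entirely right, though, that the whole point of formulating the uniform variants in Conjecture~\ref{polabc} is to control $\ccc{K,\eps}$ as $K=\hilbt{D}$ varies with $D\to-\infty$; the bare Conjecture~\ref{abcplain} is indeed too weak to yield Theorem~\ref{thm03}.
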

 
 \begin{xrem}
  For $a,b,c\in\Z$ coprime, we have $\lht{a:b:c} = \log\max\{|a|,|b|,|c|\}$ and $\fhr{a:b:c}{\Q} = \log\big(\prod_{p\mid abc} p\big)$, recovering the $abc$-conjecture for $\Z$.
 \end{xrem}

 For the method to work, we also need information about the growth of $\ccc{K,\eps}$ as a function of $K$. The approach we take is to assume that $\ccc{K,\eps}$ is dominated by the contribution ``$\log(\rtd_K)$'' of ramified primes in the inequality, in two senses of the concept of domination: either big-$O$ or small-$o$. Both have implications to the Siegel zeros problem, and we state the usual uniform $abc$-conjecture for comparison:

 \begin{conj}[Uniformity]\label{polabc}
  Let $K/\Q$ be a number field and $\eps > 0$. Then:
  \begin{enumerate}[label=(\roman*)]
   \item \textnormal{($O$-weak uniform $abc$)}~ $\ccc{K,\eps} = O_{\eps}(\log(\rtd_K))$. \smallskip
   
   \item \textnormal{(Weak uniform $abc$)}~ $\ccc{K,\eps} = o_{\eps}(\log(\rtd_{K}))$ as $\rtd_{K} \to +\infty$. \smallskip
  
   \item \textnormal{(Uniform $abc$)}~ $\ccc{K,\eps} = O_{\eps}(1)$.
  \end{enumerate}
 \end{conj}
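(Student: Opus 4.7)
The plan is to attack this through \emph{Vojta's height inequality program}, which provides the natural geometric framework for such Diophantine statements. The starting point is to reinterpret Conjecture \ref{abcplain} as a special case of Vojta's main conjecture for the curve $\mathbb{P}^{1}\setminus\{0,1,\infty\}$ over $K$: given $a,b,c \in K$ with $a+b+c = 0$, the point $P = [a:b:c]$ has logarithmic height $\lht{a:b:c}$, while $\fhr{a:b:c}{K}$ plays the role of the truncated counting function for the divisor $\{0\} + \{1\} + \{\infty\}$. The factor $(1+\eps)\log(\rtd_K)$ is then expected to emerge as the discriminant contribution attached to ramified places in Vojta's inequality, and $\ccc{K,\eps}$ as the residual error.

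For the non-uniform statement (Conjecture \ref{abcplain}), I would aim to transport the Mason--Stothers function-field proof to the arithmetic setting via Arakelov intersection theory on a regular model of $\mathbb{P}^{1}$ over $\mathrm{Spec}(\mathcal{O}_K)$. The idea is to construct a global section vanishing to prescribed order along the divisors supported on $a$, $b$, $c$, and to bound its arithmetic degree by means of the arithmetic Hodge index theorem. The target inequality
\[ \lht{a:b:c} \,<\, (1+\eps)\big(\fhr{a:b:c}{K} + \log(\rtd_K)\big) + \ccc{K,\eps} \]
would then emerge with $\ccc{K,\eps}$ decomposing into an archimedean Green's-function contribution and a non-archimedean contribution from ramified primes.

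For the uniformity statements (i)--(iii), the plan is to inspect how each term in the Arakelov calculation scales with $K$, separating absolute contributions from those intrinsically tied to ramification. Part (i) would follow if all field-dependent errors can be absorbed into $O_{\eps}(\log(\rtd_K))$; part (ii) would require improving this to $o_{\eps}(\log(\rtd_K))$, plausibly through an averaging argument over places or a sharper analysis of archimedean Green's functions as $\rtd_K \to \infty$; part (iii) would demand that the main term $(1+\eps)\log(\rtd_K)$ already absorbs every $K$-dependent contribution, leaving only an absolute constant.

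The main obstacle --- and it is a colossal one --- is that Conjecture \ref{abcplain} is itself open; it is precisely the $abc$-conjecture for number fields, one of the deepest unresolved problems in Diophantine geometry. Mochizuki's claimed proof via inter-universal Teichm\"uller theory remains widely disputed, and Vojta's program supplies a framework but not a theorem. Even conditional on an eventual proof of Conjecture \ref{abcplain}, extracting the uniform refinements (i)--(iii) would require explicit quantitative control of every error term in the root discriminant, a level of precision that no current technique approaches. For this reason, in the context of the present paper Conjecture \ref{polabc} is legitimately assumed as a hypothesis rather than pursued as a target, and any serious proof proposal here amounts to pointing at one of the most formidable open problems in arithmetic geometry.
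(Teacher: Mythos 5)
You have correctly identified that Conjecture \ref{polabc} is a conjecture, not a theorem: the paper offers no proof and assumes it as a hypothesis, noting only (following Granville--Stark) that part (iii) follows from Vojta's General Conjecture when $[K:\Q]$ is bounded, and that (iii)$\implies$(ii)$\implies$(i) via the Hermite--Minkowski lower bound $\log(\rtd_K)\gg 1$. Your sketch points at the same Vojta-type framework and honestly concedes the problem is open, which matches the paper's treatment; no further proof is expected or given.
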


 It is remarked in p. 510 of Granville--Stark \cite{grasta00} that Conjecture \ref{polabc} (iii) follows from Vojta's General Conjecture under the assumption that $[K:\Q]$ is bounded; consequently, so does (i) and (ii).\footnote{Writing $n = [K:\Q]$, the Hermite--Minkowski theorem says that $\rtd_K \geq \frac{\pi}{4} (n/\sqrt[n]{n!})^2$. Hence, from Stirling's formula, it follows that $\log(\rtd_K) \geq 2 + \log(\pi/4) + O(\log n/n) \gg 1$ uniformly for $n\geq 2$, and so (iii)$\implies$(ii)$\implies$(i).}

 \begin{rem}
  See also Remark 2.2.3 of Mochizuki's ``IUT IV'' \cite{IUTeichIV}, in which it is explained that the calculations of Corollary 2.2 (ii), (iii) of IUT IV can be regarded as a sort of ``weak'' version of uniform $abc$. Such version, however, is much weaker than the $O$-weak uniform $abc$ in Conjecture \ref{polabc} (i), and thus, in principle, one is not able to deduce ``no Siegel zeros'' from Corollary 2.2 of \cite{IUTeichIV} by using the methods we are employing here.
 \end{rem}

\subsection{Proof of Theorem \ref{thm03}}
 Consider the Weber modular functions $\gamma_2$, $\gamma_3$, related to the $j$-invariant via the identities
 \begin{equation}
  j(\tau) = \gamma_2(\tau)^3 = \gamma_3(\tau)^2 + 1728. \label{g23}
 \end{equation}
 For each $D<0$, the pair $(\gamma_2(\tau_D),\,\gamma_3(\tau_D))$ provides a solution $(x,y)$ to the Diophantine equation $x^3 - y^2 = 1728$. This solution lies in the extension $\hilbt{D} := \hilb{D}(\gamma_2(\tau_{D}),\gamma_3(\tau_{D}))$, where $\hilb{D} = \Q(\sqrt{D},j(\tau_D))$ is the Hilbert class field of $\Q(\sqrt{D})$. If $\gcd(D,6)=1$, then we actually have $\hilbt{D} = \hilb{D}$.\footnote{Cf. \S 72 of Weber \cite{weberIII}.} In general, we have the following:
 
 \begin{lem}[Lemma 1 of Granville--Stark \cite{grasta00}]\label{gsRD6}
 $\rtd_{\hilbt{D}} \leq 6\sqrt{|D|}$.
 \end{lem}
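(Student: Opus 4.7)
The plan is to control the relative root discriminant of $\hilbt{D}/\hilb{D}$ via the tower $\Q \subseteq K := \Q(\sqrt{D}) \subseteq \hilb{D} \subseteq \hilbt{D}$ and multiplicativity of discriminants. Since $D$ is a fundamental discriminant we have $|\Delta_K| = |D|$, so $\rtd_K = \sqrt{|D|}$; and since $\hilb{D}/K$ is unramified by definition of the Hilbert class field, the tower formula gives $\rtd_{\hilb{D}} = \rtd_K = \sqrt{|D|}$. The lemma therefore reduces to showing
\[ \rtd_{\hilbt{D}/\hilb{D}} \,:=\, N_{\hilb{D}/\Q}(\mathfrak{d}_{\hilbt{D}/\hilb{D}})^{1/[\hilbt{D}:\Q]} \,\leq\, 6. \]

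Next, I would observe that $[\hilbt{D}:\hilb{D}] \mid 6$, because the extension is generated by $\gamma_2(\tau_D)$ and $\gamma_3(\tau_D)$ satisfying the polynomials $X^3 - j(\tau_D)$ and $Y^2 - (j(\tau_D)-1728)$ over $\mathcal{O}_{\hilb{D}}$ (and $j(\tau_D)$ is an algebraic integer). When $\gcd(D,6)=1$ the extension is trivial by the result of Weber cited immediately after the lemma, so the bound is automatic. The substantive cases are those where $2$ or $3$ ramifies in $K$.

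The key intermediate step is that $\hilbt{D}/\hilb{D}$ is unramified outside of primes above $\{2,3\}$. The intuition is that the identity $\gamma_2^3 - \gamma_3^2 = 1728 = 12^3$ forces $\gamma_2$ and $\gamma_3$ to be simultaneously $\mathfrak{P}$-units at every prime $\mathfrak{P}$ of residue characteristic $p \notin \{2,3\}$; one can then invoke tame Kummer theory to conclude that adjoining the cube root and the square root introduces no ramification at $\mathfrak{P}$. At the remaining primes above $2$ and $3$, the standard local bound $v_\mathfrak{P}(\mathfrak{d}_{L/K}) \leq e - 1 + e\, v_\mathfrak{P}(e)$ on the local different, combined with the fact that the ramification index divides $2$ at primes above $2$ and $3$ at primes above $3$, produces contributions to $\rtd_{\hilbt{D}/\hilb{D}}$ of at most $2$ and $3$, respectively. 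Their product is the asserted bound of $6$.

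The main obstacle is tightening the local analysis at $2$ and $3$ so that the bound lands on exactly $6$, rather than a larger constant like $12$ or $27$: this entails carefully tracking $\mathfrak{P}$-adic valuations of $j(\tau_D)$ and $j(\tau_D) - 1728$ in the wildly ramified cases, and verifying that the bound is uniform across the possible degeneracies $[\hilbt{D}:\hilb{D}] \in \{1,2,3,6\}$. A cleaner alternative (which I would pursue if the direct local argument becomes unwieldy) invokes the complex multiplication description of $\hilbt{D}$ as a subfield of a ray class field of $K$ with conductor dividing $(12)$, and then applies the conductor-discriminant formula to get $\rtd_{\hilbt{D}/\hilb{D}} \leq 6$ in one stroke.
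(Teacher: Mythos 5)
The paper itself does not prove this lemma; it is cited verbatim as Lemma~1 of Granville--Stark, so I am comparing your sketch against the argument they give there (and against what is actually needed).

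Your opening reduction is correct: $\rtd_K = \sqrt{|D|}$ for $K = \Q(\sqrt{D})$, $\hilb{D}/K$ unramified gives $\rtd_{\hilb{D}} = \sqrt{|D|}$, and multiplicativity of discriminants reduces the lemma to bounding the relative contribution from $\hilbt{D}/\hilb{D}$ by~$6$. The serious problem is the ``direct local'' step. First, the bound you quote is not the standard one: Serre's bound on the different exponent is $d(\mathfrak{P}/\mathfrak{p}) \leq e - 1 + v_{\mathfrak{P}}(e)$, not $e - 1 + e\,v_{\mathfrak{P}}(e)$. More importantly, even the correct Serre bound does \emph{not} deliver ``$\leq 2$ at primes above $2$ and $\leq 3$ at primes above $3$'', because $v_{\mathfrak{P}}(e)$ scales with the absolute ramification of $\hilbt{D}$ over $\Q$. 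Concretely, if $2 \mid D$ then $e(\mathfrak{p}/2)=2$ in $\hilb{D}$; if a prime $\mathfrak{P} \mid \mathfrak{p}$ of $\hilbt{D}$ has $e(\mathfrak{P}/\mathfrak{p})=2$ (necessarily wild), then $v_{\mathfrak{P}}(2)=4$ and Serre gives only $d \leq 5$, which contributes a factor as large as $2^{5/4}\approx 2.38$ to $\rtd_{\hilbt{D}}/\rtd_{\hilb{D}}$. The analogous worst case at $3$ gives up to $3^{4/3}\approx 4.33$, and the product already exceeds $6$. So the crude local different bound, applied uniformly, cannot close the argument; the extra input you need is precisely the class-field-theoretic control on conductors that you relegate to the ``cleaner alternative''.

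That alternative is indeed the route Granville and Stark take, but you have the conductor wrong: the containment is $\hilbt{D} \subseteq K_{(6)}$ (the ray class field of $K$ of conductor $(6)$, since $\gamma_2(\tau_D)$ lies in $K_{(3)}$ and $\gamma_3(\tau_D)$ in $K_{(2)}$), not conductor dividing $(12)$. With conductor $(6)$, the conductor--discriminant formula and monotonicity of root discriminants give in one line
\[
\rtd_{\hilbt{D}} \ \leq\ \rtd_{K_{(6)}} \ \leq\ N\big((6)\big)^{1/[K:\Q]} \cdot \rtd_K \ =\ 36^{1/2}\sqrt{|D|} \ =\ 6\sqrt{|D|},
\]
whereas the same crude estimate with conductor $(12)$ would only yield $12\sqrt{|D|}$ (one would then need to exploit that not all characters attain the maximal conductor, which is exactly the sort of extra work the conductor-$(6)$ statement avoids). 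So: right skeleton, but the direct local route as written has a genuine gap, and the ray-class-field route needs the correct conductor to give the stated constant.
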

 
 This lemma quantifies the fact that $\hilbt{D}/\hilb{D}$ has very little ramification, which is a key point. We remark that the particular factor of ``$6$'' plays little role here, since it would be enough to have $\rtd_{\hilbt{D}} \ll_{\eps} |D|^{1/2 + \eps}$.
 
 \begin{lem}\label{uppht}
  Assume the $abc$-conjecture \eqref{abc0}. Then, for every fundamental discriminant $D<0$ and $0 < \eps < 1/5.01$, we have
  \[ \hei(j(\tau_{D})) < \frac{1}{1-5\eps}\,\Big((1+\eps)\, 3\log|D| + 6 \,\ccc{\hilbt{D},\eps}\Big) + O(1). \]
 \end{lem}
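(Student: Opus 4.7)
The plan is to apply the $abc$-conjecture \eqref{abc0} to the triple $(\gamma_2(\tau_D)^3,\, -\gamma_3(\tau_D)^2,\, -1728)$, which sums to zero by \eqref{g23} and lives in $\hilbt{D}$. The discriminant term is controlled directly by Lemma \ref{gsRD6}, which gives $(1+\eps)\log(\rtd_{\hilbt{D}}) \leq \tfrac{1+\eps}{2}\log|D|+O(1)$; so the whole problem reduces to comparing the left-hand side of $abc$ to $\hei(j(\tau_D))$ and estimating the conductor on the right in terms of the same quantity.

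For the left-hand side, I would use the inequality $\lht{x:y:z}\geq \hei(x/z)$ coming from restricting to two coordinates:
\[ \lht{\gamma_2(\tau_D)^3 : -\gamma_3(\tau_D)^2 : -1728} \;\geq\; \hei\!\bigg(\frac{j(\tau_D)}{1728}\bigg) \;\geq\; \hei(j(\tau_D)) - \log 1728, \]
since $1728$ is an absolute constant and $\hei(\alpha\beta)\leq \hei(\alpha)+\hei(\beta)$. The main work is on the conductor side. Since $\gamma_2(\tau_D)$ and $\gamma_3(\tau_D)$ are algebraic integers and $\gamma_2(\tau_D)^3 - \gamma_3(\tau_D)^2 = 1728$, at every non-archimedean place $v$ of $\hilbt{D}$ not lying above $2$ or $3$ the right-hand side is a $v$-unit; hence one cannot have both $v(\gamma_2)>0$ and $v(\gamma_3)>0$, and $v$ contributes to $\fhr{\gamma_2^3 : -\gamma_3^2 : -1728}{\hilbt{D}}$ precisely when at least one of $\gamma_2(\tau_D),\gamma_3(\tau_D)$ fails to be a $v$-unit. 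Places above $2$ and $3$ contribute only $O(1)$, yielding
\[ \fhr{\gamma_2^3 : -\gamma_3^2 : -1728}{\hilbt{D}} \;\leq\; \fhr{\gamma_2(\tau_D):1}{\hilbt{D}} + \fhr{\gamma_3(\tau_D):1}{\hilbt{D}} + O(1). \]

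The next step uses the standard bound $\fhr{\alpha:1}{K}\leq \hei(\alpha)$ valid for any integral $\alpha$ (a direct consequence of the product formula: the archimedean contributions to $\hei(\alpha)$ dominate the non-archimedean ones for integral $\alpha$). Combined with $\hei(\gamma_2(\tau_D)) = \tfrac{1}{3}\hei(j(\tau_D))$ and $\hei(\gamma_3(\tau_D)) = \tfrac{1}{2}\hei(j(\tau_D)-1728) = \tfrac{1}{2}\hei(j(\tau_D))+O(1)$, the two conductors together are bounded by $\tfrac{5}{6}\hei(j(\tau_D))+O(1)$. Plugging all of this into \eqref{abc0}, multiplying through by $6$, and transferring the $5(1+\eps)\hei(j(\tau_D))$ term to the left collapses the inequality to
\[ (1-5\eps)\,\hei(j(\tau_D)) \;<\; (1+\eps)\,3\log|D| + 6\,\ccc{\hilbt{D},\eps} + O(1), \]
and the lemma follows upon dividing by $1-5\eps>0$, which is the source of the hypothesis $\eps<1/5.01$.

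The delicate step is the conductor estimate, and in particular the ``above $2$ or $3$'' $O(1)$ contribution: one must ensure that the total $f_v\log p_v$ at those places (which can ramify wildly in $\hilbt{D}/\hilb{D}$) is absorbable into the absolute constant, and that the local analysis at unramified primes really does reduce $\fhr{\gamma_2^3:-\gamma_3^2:-1728}{\hilbt{D}}$ to two single-variable conductors. Everything else is a bookkeeping exercise in the standard height and conductor inequalities.
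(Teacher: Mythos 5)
Your proposal is correct and follows essentially the same route as the paper: apply the $abc$-conjecture to $\gamma_2(\tau_D)^3 - \gamma_3(\tau_D)^2 - 1728 = 0$, bound the conductor by $\hei(\gamma_2)+\hei(\gamma_3)+O(1) = \tfrac56\hei(j(\tau_D))+O(1)$, invoke Lemma~\ref{gsRD6}, and rearrange. The paper states the conductor bound in one line without justification; you correctly fill in exactly the elaboration (place-by-place analysis away from $2,3$, integrality of $\gamma_2,\gamma_3$, and $\fhr{\alpha:1}{K}\le\hei(\alpha)$ via the product formula) that the paper leaves implicit.
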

 \begin{proof}
  Writing $M := (1+\eps)\log(\rtd_{\hilbt{D}}) + \ccc{\hilbt{D},\eps}$ and applying $abc$ to the equation $\gamma_2(\tau_{D})^3 - \gamma_3(\tau_{D})^2 - 1728 = 0$, we get
  \begin{align*}
   \lht{\gamma_2^3 : \gamma_3^2 : 1728} &\stackrel{abc}{\leq} (1+\eps)\,\fhr{\gamma_2^3 : \gamma_3^2 : 1728}{K} + M \\
   &\leq (1+\eps)\,\big(\tfrac{1}{3}\,\hei(\gamma_2^3) + \tfrac{1}{2}\,\hei(\gamma_3^2)\big) + M + O(1) \\
   &\leq (1+\eps)\, \tfrac{5}{6}\,\lht{\gamma_2^3 : \gamma_3^2 : 1728} + M + O(1).
  \end{align*}
  Since $\hei(j(\tau_{D})) \leq \lht{j(\tau_{D}):j(\tau_{D})-1728:1728}$, the claim of the lemma follows by rearranging the expression above and using Lemma \ref{gsRD6}.
 \end{proof}

 \begin{proof}[Proof of Theorem \ref{thm03}]
  From Lemma \ref{gsRD6} we have $\log(\rtd_{\hilbt{D}}) = \tfrac{1}{2}\log|D| + O(1)$, so it follows from Lemma \ref{uppht} together with \eqref{eqer} that
  \begin{align*}
   \text{$O$-weak uniform $abc$ (Conjecture \ref{polabc} (i))} \ &\implies\ \limsup_{D\to-\infty} \frac{\hei(j(\tau_D))}{\log|D|} < +\infty, \\
   \text{weak uniform $abc$ (Conjecture \ref{polabc} (ii))} \ &\implies\ \limsup_{D\to-\infty} \frac{\hei(j(\tau_D))}{\log|D|} = 3;
  \end{align*}
  hence, Theorem \ref{thm03} follows directly from Theorem \ref{thm02}.
 \end{proof}

\appendix

\renewcommand{\L}{\mathcal{L}}
\newcommand{\Lc}{\varLambda_{\mathcal{L}}}
\newcommand{\selb}{\mathbf{S}}
\newcommand{\selbx}{\mathbf{S}^{\sharp}}
\renewcommand{\k}{\mathfrak{K}}
\renewcommand{\r}{\mathfrak{r}}

\section{On ``\texorpdfstring{$B + \sum 1/\varrho = 0$}{B + sum 1/rho = 0}''}\label{appA}
 Let $\chi\pmod{q}$ be a primitive Dirichlet character, $q\geq 2$. In section \ref{sec2}, we stated an expression for $\frac{L'}{L}(s,\chi)$ in \eqref{ch12dav} that is different from what usually appears in textbooks. Many investigations on Dirichlet $L$-functions have as a starting point the following classical formula:\footnote{Cf. Eqs. (17), (18), Chapter 12, p. 83 of Davenport \cite{davenport00}.}
 \begin{equation*}
  \frac{L'}{L}(s,\chi) = \bigg(\sum_{\varrho(\chi)} \frac{1}{s-\varrho}\bigg) -\frac{1}{2}\log\bigg(\frac{q}{\pi}\bigg) - \frac{1}{2}\frac{\Gamma'}{\Gamma}\bigg(\frac{s+a_{\chi}}{2}\bigg) + \bigg(B_{\chi} + \sum_{\varrho(\chi)} \frac{1}{\varrho} \bigg).
 \end{equation*}
 where $a_{\chi} := \frac{1}{2}(1-\chi(-1))$, ``$\sum_{\varrho(\chi)}$'' denotes a sum over the non-trivial zeros of $L(s,\chi)$, and $B_{\chi} \in\C$ is a constant not depending on $s$. While the fact that $\Re\big(B_{\chi} + \sum_{\varrho(\chi)} 1/\varrho\big) = 0$ is well-known, the more precise 
 \begin{equation}
  B_{\chi} + \sum_{\varrho(\chi)} \frac{1}{\varrho} = 0 \label{b0}
 \end{equation}
 does not appear to be as much so, despite holding true in similar generality. It follows from Theorem 2 (p. 257) of Ihara--Murty--Shimura \cite{ihamurshi09}, for example, that the equivalent of \eqref{b0} holds for finite-order Hecke characters in number fields (thus, in particular, for Dirichlet characters).

 While the proof in \cite{ihamurshi09} uses Weil's ``explicit formula'', it is possible to prove \eqref{b0} with a simpler, more direct analytic argument, as shown by Lucia \cite{luciaHDM}. We will reproduce Lucia's argument here in some generality, given the conspicuous absence of this fact in the literature, showing \eqref{b0} for the extended Selberg class of $L$-functions, thus highlighting that it is a relatively simple consequence of the functional equation.

 \subsection{The extended Selberg class \texorpdfstring{$\selbx$}{S\^{}\#}}
 Following Kaczorowski \cite{kacz06}, a Dirichlet series $\L(s) = \sum_{n\geq 1} a_n\,n^{-s}$ is said to be an $L$-function in the \emph{extended Selberg class} (denoted $\selbx$) if it satisfies the following properties:
 \begin{enumerate}[label=\textbf{(S\arabic*)}]
  \item\label{s1} $\sum_{n\geq 1} a_n\,n^{-s}$ is absolutely convergent for $\Re(s)>1$. \smallskip
  
  \item\label{s2} (Analytic continuation) There exists $k\in\Z_{\geq 0}$ such that $(s-1)^k\L(s)$ is entire of finite order. Write $k(\L)$ for the smallest such $k$. \smallskip
   
  \item\label{s3} (Functional equation) There exists a triple $(c, Q, \gamma_\L)$, where $c, Q$ are positive real numbers, and $\gamma_\L$ is a function of the form
  \[ \gamma_{\L}(s) := \prod_{j=1}^{f} \Gamma(\lambda_j s + \mu_j) \]
  (called a \emph{Gamma factor}), with $f\in\Z_{\geq 1}$, positive real numbers $\lambda_j$ ($1\leq j\leq f$), and complex numbers $W$, $\mu_j$ ($1\leq j \leq f$) with $|W|=1$ and $\Re(\mu_j)\geq 0$, for which the \emph{completed $L$-function}
  \begin{equation}
   \Lc(s) := c\, Q^s\, \gamma_{\L}(s)\, \L(s) \label{fnceq}
  \end{equation}
  satisfies a functional equation: $\Lc(s) = W \overline{\Lc(1-\overline{s})}$.
 \end{enumerate}
 
 This class and the more restrictive \emph{Selberg class} (with the additional Euler product and Ramanujan hypothesis properties) contain most of the $L$-functions appearing in number theory, including $L$-functions of primitive Dirichlet characters and finite-order primitive Hecke characters in number fields. From now on, let $\L$ be a fixed $L$-function in $\selbx$. See section 2 of Kaczorowski \cite{kacz06} for the facts used below.
 
 Denote by $\varrho(\L)$ the multiset of \emph{non-trivial zeros} of $\L$ (i.e., zeros of $\Lc(s)$). There is a real number $\sigma_{\L}\geq 1$ such that $1 - \sigma_{\L} \leq \Re(\varrho) \leq \sigma_{\L}$ for every non-trivial zero $\varrho$ of $\L$. Writing $N^{\pm}_{\L}(T) = \#\{\varrho(\L) ~|~ 0 \leq \pm\Im(\varrho) < T\}$, by standard techniques based on the argument principle one shows that
 \begin{equation}
  N^{-}_{\L}(T),\ N^{+}_{\L}(T) = \frac{d_{\L}}{2\pi}\,T\log T + c_{\L}T + O(\log T), \label{zeroest}
 \end{equation}
 where $d_{\L} = 2\sum_{j=1}^{f}\lambda_j$ is the \emph{degree} of $\L$, and $c_{\L}\in\R$ is some constant depending only on $\L$. This implies, in particular, that $\Lc$ has order $1$ as an entire function, and that $\sum_{\varrho(\L)\neq 0} 1/\varrho$ converges in the principal-value sense ``$\lim\limits_{T\to +\infty} \sum_{\varrho(\L)\neq 0,\, |\Im(\varrho)|\leq T}$''. Thus, from \ref{s2}, \ref{s3}, by Hadamard's factorization theorem we have
 \[ (s(1-s))^{k(\L)}\, \Lc(s) = s^{m_0}(1-s)^{m_1}\, e^{A+Bs}\,\prod_{\varrho(\L)\neq 0,1} \left(1-\frac{s}{\varrho}\right) e^{s/\varrho}, \]
 where $m_0-k(\L)$, $m_1-k(\L)$ are the order of $s=0$, $1$ as zeros of $\Lc(s)$ (hence negative if it is a pole), respectively, and $A$, $B\in\C$ are constants. Since $\Lc(0) = \overline{\Lc(1)}$, it follows that $m_0-k(\L) = m_1-k(\L)$; thus, by taking logarithmic derivatives, we derive:
 \begin{equation}
  \frac{\Lc'}{\Lc}(s) + \frac{k(\L)}{s} + \frac{k(\L)}{s-1} = \sum_{\varrho(\L)} \frac{1}{s-\varrho} + \Bigg(B + \sum_{\varrho(\L)\neq 0,1} \frac{1}{\varrho} \Bigg). \label{eqG0}
 \end{equation}
 This is our starting point.

\subsection{Proof of ``\texorpdfstring{$B + \sum 1/\varrho = 0$}{B + sum 1/rho = 0}''}
 
 \begin{thm}\label{exfor}
  For $\L \in \selbx$, in the notation of \ref{s2}, \ref{s3}, we have
  \begin{equation}
   \frac{\L'}{\L}(s) + \frac{k(\L)}{s} + \frac{k(\L)}{s-1} = \bigg(\sum_{\varrho(\L)} \frac{1}{s-\varrho}\bigg) - \log Q - \frac{\gamma'_\L}{\gamma_\L}(s). \label{exforF}
  \end{equation}
 \end{thm}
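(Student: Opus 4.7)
The plan is to reduce Theorem \ref{exfor} to proving the identity \eqref{b0},
\[ B + \sum_{\varrho(\L) \neq 0, 1} \frac{1}{\varrho} = 0, \]
and then to establish \eqref{b0} via the functional equation. Taking the logarithmic derivative of the defining factorization $\Lc(s) = c Q^s \gamma_\L(s) \L(s)$ in \ref{s3} gives
\[ \frac{\Lc'}{\Lc}(s) \,=\, \log Q + \frac{\gamma'_\L}{\gamma_\L}(s) + \frac{\L'}{\L}(s), \]
so substituting into \eqref{eqG0} transforms the statement of Theorem \ref{exfor} exactly into \eqref{b0}.

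To prove \eqref{b0}, I would introduce the dual $L$-function $\L^*(s) := \overline{\L(\overline{s})}$, which again lies in $\selbx$, with completed $L$-function $\Lc^*(s) = \overline{\Lc(\overline{s})}$, with Hadamard constant $B^* = \overline{B}$, and with nontrivial zeros $\varrho(\L^*) = \{\overline{\varrho} : \varrho \in \varrho(\L)\}$. The functional equation $\Lc(s) = W \overline{\Lc(1-\overline{s})}$ rewrites cleanly as $\Lc(s) = W\,\Lc^*(1-s)$, whose logarithmic derivative is $\frac{\Lc'}{\Lc}(s) = -\frac{\Lc^{*\prime}}{\Lc^*}(1-s)$. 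Substituting \eqref{eqG0} into both sides and using the functional-equation-induced bijection $\varrho \mapsto 1-\overline{\varrho}$ of $\varrho(\L)$ (which preserves $|\Im\varrho|$ and is thus consistent with the principal-value grouping underlying $\sum 1/\varrho$), all the $s$-dependent terms cancel, yielding the single complex identity
\[ \Big(B + \sum_{\varrho \neq 0,1} \tfrac{1}{\varrho}\Big) + \Big(B^* + \sum_{\varrho^* \neq 0, 1} \tfrac{1}{\varrho^*}\Big) = 0, \]
equivalently $\alpha + \overline{\alpha} = 0$ for $\alpha := B + \sum 1/\varrho$, hence $\Re(\alpha) = 0$.

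To upgrade this to the full vanishing $\alpha = 0$, I would invoke the uniqueness of the Hadamard factorization applied to the symmetrized product $G(s) := (s(1-s))^{2k(\L)}\,\Lc(s)\Lc^*(s)$, which satisfies $G(s) = G(1-s)$ (both functional equations combined) and whose zero multiset $\varrho(\L) \sqcup \overline{\varrho(\L)}$ is closed under $\varrho \mapsto \overline{\varrho}$. Grouping its Hadamard product by conjugate pairs $\varrho \leftrightarrow \overline{\varrho}$ yields an absolutely convergent factorization --- since $(1-s/\varrho)(1-s/\overline{\varrho}) = 1 + O(1/|\varrho|^2)$ uniformly --- so the linear exponent of $G$ is pinned down without ambiguity, and matching its value against the product of the individual Hadamard factorizations of $\Lc$ and $\Lc^*$ forces $\Im(\alpha) = 0$, completing the proof of \eqref{b0}.

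The main obstacle is the delicate bookkeeping of the conditionally convergent sums: the several involutions on $\varrho(\L)$ ($\varrho \mapsto 1-\overline{\varrho}$, $\varrho \mapsto \overline{\varrho}$, $\varrho \mapsto 1-\varrho$) must all be verified compatible with the principal-value grouping $|\Im \varrho| \leq T$ implicit in \eqref{eqG0}. Fortunately all three preserve $|\Im\varrho|$, so the needed rearrangements are legitimate, and once this compatibility is made precise the proof of Theorem \ref{exfor} becomes a direct consequence of the functional equation together with the uniqueness of the Hadamard factorization.
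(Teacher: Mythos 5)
Your reduction of Theorem \ref{exfor} to the identity $\alpha := B + \sum_{\varrho\neq 0,1} 1/\varrho = 0$, and your argument that $\Re(\alpha)=0$ via the functional equation, are both correct and match the paper's reasoning (the paper phrases the cancellation as $\lim_{s\to 1}\big(\tfrac{\Lc'}{\Lc}(s) + \tfrac{\varLambda'_{\overline{\L}}}{\varLambda_{\overline{\L}}}(1-s)\big) = 2\Re(\alpha)$, but the mechanism --- pairing $\varrho \leftrightarrow 1-\varrho$ so the zero sums cancel --- is the same).

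However, your step establishing $\Im(\alpha)=0$ has a genuine gap. The symmetrized product $G(s) = (s(1-s))^{2k(\L)}\Lc(s)\Lc^*(s)$ carries \emph{no information whatsoever} about $\Im(\alpha)$. To see this concretely: the linear coefficient in the Hadamard factorization of $G$ is $B + B^* = B + \overline{B} = 2\Re(B)$, and regrouping the product by conjugate pairs $\varrho \leftrightarrow \overline{\varrho}$ absorbs into the exponent exactly $\sum_{\varrho(\L)}\big(\tfrac{1}{\varrho}+\tfrac{1}{\overline{\varrho}}\big) = 2\Re\big(\sum 1/\varrho\big)$, so the ``pinned-down'' exponent you obtain is $2\Re(\alpha)$, with the imaginary parts having cancelled identically. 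Comparing this with $G(s)=G(1-s)$ can therefore only re-derive $\Re(\alpha)=0$, never $\Im(\alpha)=0$. (One can make this structural: replacing $\Lc(s)$ by $e^{i\theta s}\Lc(s)$ for real $\theta$ shifts $\Im(\alpha)$ by $\theta$ but leaves $\Lc(s)\Lc^*(s)$ unchanged.) The constraint on $\Im(\alpha)$ must come from additional normalizations built into \ref{s3} --- specifically that $Q$ is real and that the Gamma factor $\gamma_{\L}$ has an asymptotically vanishing imaginary part on the real axis. This is precisely what the paper exploits: it takes $s=R\to+\infty$ along the reals, notes that $\Im\big(\tfrac{\L'}{\L}(R)\big)\to 0$ and $\Im\big(\tfrac{\gamma'_\L}{\gamma_\L}(R)\big)\to 0$, and then reduces the problem to the estimate $\Im\big(\sum_{\varrho(\L)} \tfrac{1}{R-\varrho}\big) \ll \tfrac{\log R}{R}$, which it proves via the zero-counting bound $|N^{+}_{\L}(T)-N^{-}_{\L}(T)|\ll\log T$ from \eqref{zeroest}. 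You would need to replace your Hadamard-uniqueness argument with an asymptotic argument of this type.
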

 \begin{proof}
  From \ref{s3} we have that, if $\varrho$ is a zero of $\L$, then so is $1-\overline{\varrho}$. Moreover, $\varrho(\overline{\L}) = \{1-\varrho ~|~ \varrho \in \varrho(\L)\}$, where $\overline{\L}(s) = \overline{\L(\overline{s})}$ is the \emph{dual} of $\L$. Therefore, by \eqref{fnceq} and \eqref{eqG0},
  \[ 0 = \lim_{s\to 1} \bigg(\frac{\Lc'}{\Lc}(s) + \frac{\varLambda'_{\overline{\L}}}{\varLambda_{\overline{\L}}}(1-s)\bigg) = 2\,\Re\Bigg(B + \sum_{\varrho(\L)\neq 0,1} \frac{1}{\varrho} \Bigg). \]
  Thus, in view of \eqref{fnceq}, to prove the theorem we just need to show that $\Im\big(B + \sum_{\varrho(\L)\neq 0,1} 1/\varrho\big) = 0$. Since both $\Im(\frac{\L'}{\L}(R))$, $\Im(\frac{\gamma'}{\gamma}(R)) \to 0$ as $R\to +\infty$ through the real numbers, by \eqref{fnceq} and \eqref{eqG0} it suffices to show that
  \[ \Im\bigg(\sum_{\varrho(\L)}\frac{1}{R-\varrho}\bigg) \ll \frac{\log R}{R}. \]
  Let $R>0$ be fixed and consider zeros up to height $T>0$. Writing $\varrho = \beta + i\gamma$ for a generic non-trivial zero, since $|\beta| \ll 1$, we have  
  \begin{equation*}
   \sum_{\substack{\varrho(\L) \\ |\gamma| < T}} \frac{(R-\beta) + i\gamma}{(R-\beta)^2+\gamma^2} = \sum_{\substack{\varrho(\L) \\ |\gamma| < T}} \Bigg(\frac{R-\beta+i\gamma}{R^2 + \gamma^2} + O\left(\frac{R}{(R^2 + \gamma^2)^{3/2}}\right) \Bigg). 
  \end{equation*}
  For the error term, we have  
  \begin{align*}
   \sum_{\varrho(\L)} \frac{R}{(R^2 + \gamma^2)^{3/2}} \leq \sum_{\substack{\varrho(\L) \\ |\gamma| \leq R}} \frac{1}{R^2} + \sum_{\substack{\varrho(\L) \\ |\gamma| > R}} \frac{R}{|\gamma|^3} \ll \frac{\log R}{R},
  \end{align*}
  while, for the imaginary part of the main term, since from \eqref{zeroest} it follows that $|N^{-}_{\L}(T) - N^{-}_{\L}(T)| \ll \log T$,
  \begin{align*}
   &\sum_{\substack{\varrho(\L) \\ |\gamma| < T}} \frac{\gamma}{R^2 + \gamma^2} = \int_{0}^{T} \frac{t}{R^2 + t^2}\,\mathrm{d}N^{+}_{\L}(t) - \int_{0}^{T} \frac{t}{R^2 + t^2}\,\mathrm{d}N^{-}_{\L}(t) \\[-1em]
   &\hspace{3em}= \frac{T}{R^2 + T^2}\big(N^{+}_{\L}(T) - N^{-}_{\L}(T)\big) - \int_{0}^{T}\frac{R^2+3t^2}{(R^2+t^2)^2}\big(N^{+}_{\L}(t) - N^{-}_{\L}(t)\big)\,\mathrm{d}t \\
   &\hspace{3em}\ll \frac{T\log T}{R^2 + T^2} +  \int_{0}^{T} \frac{\log t}{R^2+t^2}\,\mathrm{d}t \ll \frac{\log R}{R}. \qedhere
  \end{align*}  
 \end{proof}

\addtocontents{toc}{\protect\setcounter{tocdepth}{0}}
\section*{Acknowledgements}
 I would like to express my deep gratitude to Shinichi Mochizuki, Go Yamashita, and Andrew Granville, for their patient guidance, encouragement, and many useful critiques to this research. I would also like thank Henri Darmon, for his engaging responses to email questions, and the anonymous referee, for the invaluable recommendations on writing.
 
\addtocontents{toc}{\protect\setcounter{tocdepth}{1}}
 

\bibliographystyle{amsplain}
\bibliography{biblio_file}%
\end{document}